\setlist[description]{style=multiline,topsep=4pt,align=parright}
\let\reftagform@=\tagform@
\def\tagform@#1{\maketag@@@{(\ignorespaces\textcolor{black}{#1}\unskip\@@italiccorr)}}
\newcommand{\iref}[1]{\textup{\reftagform@{\tcr{\ref{#1}}}}}
\newcommand{\alow}{\underline{\alpha}}
\newcommand{\aup}{\overline{\alpha}}
\newcommand{\tlow}{\underline{\tau}}
\newcommand{\Cupb}{\overline{C}}
\newcommand{\hup}{\overline{h}}
\newcommand{\w}{w}
\renewcommand{\norm}[1]{\left\lVert#1\right\rVert}
\newcommand{\eas}{\overset{\textrm{a.s.}}{=}}
\newcommand{\toas}{\overset{\textrm{a.s.}}{\to}}
\newcommand{\leqas}{\overset{\textrm{a.s.}}{\leq}}
\newcommand{\seq}[1]{\left(#1\right)}
\newcommand{\reals}{\mathbb{R}}
\newcommand{\Expectation}{\mathbb{E}}
\begin{document}
\title{Zeroth order optimization with orthogonal random directions}
\author{David Kozak\thanks{Solea Energy. E-mail: dkozak@soleaenergy.com} \and
Cesare Molinari\thanks{Istituto Italiano di Tecnologia. E-mail: cecio.molinari@gmail.com} \and 
Lorenzo Rosasco\thanks{ MaLGa, DIBRIS, Università di Genova
CBMM, MIT
Istituto Italiano di Tecnologia. E-mail: lorenzo.rosasco@unige.it}
\and Luis Tenorio \thanks{Department of Applied Mathematics and Statistics, Colorado School of Mines. E-mail: ltenorio@mines.edu} \and Silvia Villa \thanks{ MaLGa, DIMA, Università degli Studi di Genova. E-mail: villa@dima.unige.it}
}
\date{}
 \maketitle
\begin{flushleft}\end{flushleft}
\begin{abstract}
We propose and analyze  a randomized zeroth-order approach based on approximating the exact gradient by finite differences computed in a set of orthogonal random directions that changes with each iteration. A number of previously proposed methods are recovered as special cases including spherical smoothing, coordinate descent, as well as  discretized gradient descent. Our main contribution is proving convergence guarantees as well as convergence rates under different parameter choices and assumptions. In particular, we consider convex objectives, but also possibly non-convex objectives satisfying the  Polyak-Łojasiewicz (PL) condition. Theoretical results are complemented and illustrated by numerical experiments. 
\end{abstract}

\section{Introduction}
It is common in engineering, economics, statistics, and machine learning  to try to minimize  a function for which no analytical form is readily accessible, and  only a zeroth-order oracle giving the function value at a given point is available. Sometimes an analytical form for the function exists, but the gradient does not have an explicit expression or it is infeasible to obtain. Specific examples of both scenarios are highlighted in the first chapter of \cite{conn2009introduction}, to which we add reinforcement learning \cite{salimans2017evolution,NIPS2018_7451,pmlr-v80-choromanski18a,MR2298287}. 
When only function evaluations are available the options for optimization are somewhat limited. Some of the earliest theoretical analysis of zeroth order optimization was done on \emph{random search}  \cite{matyas1965random,rastrigin1963convergence}. In random search, a step is proposed in a randomly chosen direction and the objective function is computed; if it provides an improvement over the function value at the current position then the step is taken, otherwise a new random sample is drawn and the process is repeated. 
This approach has the downside of not using derivative information, therefore relying on inexpensive function evaluations and converging slowly due to the uninformed search directions. Older methods, such as discrete gradient descent and discrete coordinate descent \cite{MR1264365,MR50243},  have similar aims but actually use approximate derivative information to determine how far to step.
Extensions to functions that are differentiable almost-everywhere have been considered in \cite{gupal1977method,gupal1977algorithm}. There are alternative algorithms that behave similarly to (randomized) finite difference coordinate descent but use a random basis for choosing descent directions \cite{MR499560,nesterov2017random,duchi2015optimal,MR2298287} and ample experimental results showcase the practical utility of these approaches \cite{salimans2017evolution,NIPS2018_7451,pmlr-v80-choromanski18a}. The Itoh–Abe discrete gradient method studied in \cite{grimm2017discrete,riis2018geometric,ehrhardt2018geometric} is another approach to zeroth-order optimization, but at each iteration and for each coordinate it requires to solve a scalar equation involving the objective function. Similar considerations apply for its stochastic version, where the stepsize is still defined in an implicit way.

Our approach is based  on approximating the exact gradient by finite differences computed in a set of orthogonal random directions. Different randomized projection can be considered, recovering different approaches as special cases. Indeed, our general approach recovers  finite difference versions of coordinate descent, descent in subspaces defined by random orthogonal matrices, and spherical smoothing as special cases. All these methods can be treated in a unified way with our approach.  Our main contribution is  proving convergence results as well as convergence rates. For convex objectives,  we study convergence  in  function value and give realizable  conditions for  convergence of the iterates. Beyond convexity, we consider the case when  the objective satisfies the Polyak-Łojasiewicz (PL) condition for which we can give stronger guarantees with faster rates.  The PL condition is necessarily satisfied by functions that are strongly convex in which case our results are trivially extended to convergence of the iterates to the unique minimizer. For many   instances of our general algorithm the derived results are new. Simple experiments are also provided to show that the rates described by the theorems are achievable in practice.
 
The  paper is organized as follows. In Section~\ref{sec:setting}, we describe the general setting,   proposed approach and its special cases. In Section~\ref{sec:main}, we summarize and discuss our main results. 
In Section~\ref{sec:pre}, we present useful preliminary  results.
In Section~\ref{Sect: Convex case} and \ref{Sect: PL}, we detail and prove the results for the convex case and for objective functions satisfying a PL condition - respectively. In Section~\ref{sec:num} we present numerical experiments and conclude in Section~\ref{sec:conc} with some remarks and open questions.

\section{Problem statement}\label{sec:setting}
We consider the problem of finding a point $x_*\in\R^d$ such that $f(x_*)= f_*$, where
\begin{equation}\label{eq:P}
\tag{P}
f_* := \inf_{x\in \R^d} f(x).
\end{equation}
The function $f\colon \  \R^d\to\R$ satisfies the following hypotheses:\\
\begin{enumerate}[label={\rm ({\bf H.\arabic{*}})}, ref={\rm\bf H.\arabic{*}}, leftmargin=4em]
\item \label{H_lip} $f_*>-\infty$, $f$ is differentiable and $\nabla f$ is $\lambda$-Lipschitz; that is, for every $x_1$ and $x_2\in\R^d$,
$$\norm{\nabla f(x_2)-\nabla f(x_1)}{}\leq \lambda \norm{x_2-x_1}{}.$$
\item \label{H_existence} $f$ is convex and has a minimizer in $\R^d$.
\end{enumerate}
 
The goal of this paper is to study the convergence properties of the stochastic iterative procedure described in Algorithm~\ref{eqn: derivative-free}, designed to solve numerically problem \eqref{eq:P}. 
This Algorithm is based on the finite difference approximations 
of a subset of directional derivatives of $f$ randomly chosen at each iteration. More precisely, the steps of the algorithm are summarized as follows: \\

\begin{algorithm}[H]
Let $x_0\in\R^d$, let $(\alpha_k)_{k\in\mathbb{N}}$ and $(h_k)_{k\in\mathbb{N}}$ be sequences of positive real values and let $(P_k)_{k\in \mathbb{N}}$ be a sequence of independent random  matrices in $\mathbb{R}^{d\times\ell}$ defined on the probability space $(\Omega,\mathcal{A},\mathbb{P})$. \\
\For{$k=0,1,\ldots$  }
{$x_{k+1} = x_k - \alpha_k P_k \nabla_{\left(P_k,h_k\right)}  f(x_k).$}
\caption{Zeroth order stochastic subspace algorithm}\label{eqn: derivative-free}
\end{algorithm}
 
We briefly introduce the notation and explain the main ideas behind the algorithm. Given $\ell\leq d$, a matrix $P\in\R^{d \times \ell}$ and an index $j\in \left[\ell\right]:=\left\{1,\ldots,\ell\right\}$, we let $p^{(j)}\in\R^d$ denote the $j$-th column of $P$. For $h>0$, define the vector $\nabla_{\left(P,h\right)} f(x)\in \R^{\ell}$ with entries
\begin{equation}\label{surrogate}
\left[\nabla_{\left(P,h\right)} f(x)\right]_j : = \frac{f(x+h p^{(j)})-f(x)}{h},\quad\quad \forall j\in \{1,\ldots,\ell\}. 
\end{equation}
Note that $\nabla_{\left(P,h\right)} f(x)$ is the finite difference approximation of the directional derivatives of $f$ in the directions identified by the columns of $P$. Since $f$ is differentiable, 
$\lim\limits_{h\to 0^+} \left[\nabla_{\left(P,h\right)}f(x)\right]_j  = \langle \nabla f(x), p^{(j)} \rangle$ for every $j\in\left[ \ell \right]$ and so 
$$\lim\limits_{h\to 0^+} \nabla_{\left(P,h\right)} f(x) = P^\top \nabla f(x).$$
Using the above notation, Algorithm \ref{eqn: derivative-free} can be re-written more explicitly  as
\begin{equation*}
x_{k+1}=x_k-\alpha_k \sum_{j=1}^{\ell} \frac{f(x_k+h_k p_k^{(j)})-f(x_k)}{h_k} \ p_k^{(j)}.
\end{equation*}
The recursion has the same structure as classical gradient descent, but the gradient at $x_k$ is computed with two different forms of  approximations. On the one hand, the directional derivatives are replaced by finite differences, no derivative of the function $f$ is required. On the other hand, only the $\ell$ directions defined by the columns of $P_k$ are used. Note that $\ell$ may be smaller than the dimension of the ambient space $d$. Moreover, the directions are chosen randomly at each iteration and are not necessarily drawn from the canonical basis. Indeed, throughout the paper the following properties are the only ones  required on the random matrices $P_k$: for every $k\in\N$,
\begin{enumerate}[label={\rm ({\bf P.\arabic{*}})}, ref={\rm\bf P.\arabic{*}}, leftmargin=4em]
		\item \label{A_Pas} $P_k^\top P_k\overset{a.s.}{=}\left(d/\ell\right)\mathbb{I}$;
			\item \label{A_PE} $\mathbb{E}P_kP_k^\top= \mathbb{I}.$
\end{enumerate}
Next, we discuss several examples of algorithms that can be derived as special cases, see also Section \ref{sect: special cases}. 
\begin{example}[Coordinate descent]
If $P_k$ contains $\ell$ columns of $\I_d$ chosen uniformly at random without replacement and with random sign, say $\pm e_{k_1}, \ldots \pm e_{k_\ell}$, then $p_k^{(j)} = \pm e_{k_j}$ for $j=1,\ldots,\ell$, 
and  Algorithm~\ref{eqn: derivative-free} corresponds to a discretized version of parallel block-coordinate descent.
\end{example}
\begin{example}[Spherical smoothing]
Consider $P_k = (\sqrt{d/\ell})\,Q_k\I_{d\times\ell}$, where $Q_k$ is as in the $QR$-decomposition of a matrix $Z_k = Q_kR_k \in \reals^{d \times d}$  with $R_{k_{ii}} >0,$ and each element of $Z_k$ is drawn independently from $\mathcal{N}(0,1)$.
The matrix $\I_{d \times \ell}$ truncates $Q_k$ to its first $\ell$ columns so $Q_k\I_{d\times\ell}$ corresponds to $\ell$ columns of the random orthogonal matrix distributed according to the Haar measure on orthogonal matrices \cite{Mezzadri}. 
In other words, the columns $p_k^{(j)}$ are orthogonal and distributed uniformly on the sphere for all $j$. Then, when $\ell=1$, $P_k\nabla_{(P_k,h)}f(x_k)$ is a spherical smoothing estimate of the gradient, as described in, e.g., \cite{MR2298287,berahas2019theoretical}. For the case $\ell >1$ it is more common  to sample $p^{(j)}$ independently and uniformly on the sphere \cite{berahas2019theoretical}, but in our case, to satisfy Assumptions \eqref{A_Pas} and \eqref{A_PE}, the columns of $P$ must be orthonormal, similar to \cite{Kozak2021Stochastic}. 
The advantage of a matrix $P$ with orthonormal columns is discussed at length in \cite{Kozak2021Stochastic}. We remark here  that this property is a valid extension of traditional spherical smoothing to subspaces of dimension greater than one, and is required to obtain our results and connect Algorithm \eqref{eqn: derivative-free} with discrete gradient descent with $\ell=d$.
\end{example}
\begin{example}[Gradient descent on random subspaces]  
If $h_k =h$ for every $k\in\mathbb{N}$ and $h \to 0$, then $\nabla_{(P_k,h_k)}f(x_k)$ becomes $P_k^\top\nabla f(x_k)$ and the algorithm reduces to 
\begin{equation}\label{eqn: gradient-free}
x_{k+1} = x_k - \alpha_k P_k P_k^\top \nabla f(x_k).
\end{equation}
 For shorthand, when we reference Algorithm~\ref{eqn: derivative-free} with $h_k=0$, we are referring to the use of exact directional derivatives as in recursion~\ref{eqn: gradient-free}, which has been introduced and studied in \cite{Kozak2021Stochastic}. Our analysis allows to recover and improve on previous results for the iteration in~\eqref{eqn: gradient-free}.
 \end{example}
 \begin{example}[Gradient descent]  When $\ell=d$ we have $PP^\top = P^\top P = \I_d$. Hence,  $P$ is an orthonormal basis for $\reals^d$, so that  discrete gradient descent and gradient descent are recovered as special cases of Algorithm \ref{eqn: derivative-free} and recursion \ref{eqn: gradient-free} respectively.
\end{example}
Before stating and discussing our main results, we add one remark. 
\begin{remark}[Derivative-free optimization and automatic differentiation]
 From a practical point of view, the implementation of methods based on exact gradient computations, such as~\eqref{eqn: gradient-free},  is restricted to cases where exact directional derivatives are available. Under certain conditions, one can use automatic differentiation to obtain directional derivatives, however this requirement restricts the user to particular software, and precludes experiments wherein the function is accessed only via blackbox function evaluations, as is more common in derivative-free optimization.
 Importantly, the recursion based on exact gradient computations cannot be used when the simulations are physical (such as robotics, and many engineering examples), or when the objective can not be described by the elementary functions available to automatic differentiation software (such as in reinforcement learning).  
 \end{remark}
\section{Main results}\label{sec:main}
In this section,  we summarize the main results of the paper. 
We provide convergence results, explain the dependence on the discretization parameters $h_k$ and the choice of the stepsizes $\alpha_k$, and provide context for the results within the larger body of literature. The section contains two theorems. Theorem \ref{Th:main1} establishes convergence properties of Algorithm~\ref{eqn: derivative-free} in the convex case, while Theorem \ref{Th:main2} deals with objective functions satisfying the Polyak-Łojasiewicz condition \ref{H_PL} without requiring convexity. The theorems are simplified versions of the results in Sections \ref{Sect: Convex case} and \ref{Sect: PL}, where more detailed statements and proofs can be found.

\begin{theorem}[Convergence - convex case]\label{Th:main1}  Assume that  conditions \ref{H_lip} and \ref{H_existence} are satisfied and suppose that  \ref{A_Pas} and \ref{A_PE} hold. Let $\seq{x_k}$ be a random sequence generated by Algorithm~\ref{eqn: derivative-free}. Set $\Lambda:=\lambda d/\ell$.\\
\begin{itemize}
\item[(i) ] Let $0\leq h_k\leq \hup$ and $\alpha_k=\alpha$ for some $\hup > 0$ and $\alpha\in \left(0,1/\Lambda\right)$. Then, for explicit constants $C_1, C_2, C_3$ and $C_4$,
$$\min_{j\in[k]}\mathbb{E}(f(x_j)-f_*) \quad \leq \quad  \max\{C_1/(k+1)+C_2\hup^2+C_3\hup, \ C_2 \hup^2+C_4\hup\}.$$
\item[(ii) ] Set $h_k=1/k^r$ with $r>0$ and $\alpha_k=\alpha/k$ with $\alpha\in \left(0,1/\Lambda\right)$. Then,
\begin{equation*}
		\lim_k f(x_k) \quad  \eas \quad  f_*   \ \ 
			\end{equation*}
		and, for an explicit constant $D_0>0$,
	\begin{equation*}
	\begin{split}
	\min_{j\in[k]} \mathbb{E}\left[f(x_j) -f_*\right] \quad  \leq \quad D_0/\ln k.
	\end{split}
	\end{equation*}
	\item[(iii) ] Set $h_k=1/k^r$ with $r>1$ and $\alpha_k=\alpha$ with $\alpha\in \left(0,1/\Lambda\right)$. Then,
	\begin{equation*}
	\lim_k f(x_k) \quad  \eas \quad  f_* \ \
	\end{equation*}
	and, for an explicit constant $D>0$,
	\begin{equation*}
	\begin{split}
	\min_{j\in[k]} \mathbb{E}\left[f(x_j) -f_*\right]  \quad  \leq \quad D/k.
	\end{split}
	\end{equation*}
	Moreover, there exists a random variable $x_*$ with values in $\argmin f$ such that $x_k\toas x_*$.\\
	\item[(iv) ] Consider recursion ~\ref{eqn: gradient-free} with $\alpha_k=\alpha$ and $\alpha\in \left(0,2/\Lambda\right)$. Then
	\begin{equation*}
	f(x_k) - f_* \quad  \eas \quad o\left(1/k\right) \ \
	\end{equation*}
	and there exists a random variable $x_*$ with values in $\argmin f$ such that $x_k\toas x_*$.
	\end{itemize}
\end{theorem}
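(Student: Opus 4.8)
The plan is to reduce all four items to two one-step inequalities for the random surrogate gradient $g_k := P_k\nabla_{(P_k,h_k)}f(x_k)$, and then specialize. First I record the basic facts about $g_k$. By \ref{A_Pas} every column of $P_k$ has squared norm $d/\ell$, so the quadratic upper bound for $\lambda$-smooth functions gives, pathwise, $\nabla_{(P_k,h_k)}f(x_k) = P_k^\top\nabla f(x_k) + \eta_k$ with $\|\eta_k\| \le c_0 h_k$, hence $g_k = P_kP_k^\top\nabla f(x_k) + P_k\eta_k$ with $\|P_k\eta_k\| \le C h_k$ ($c_0, C$ depending only on $\lambda,d,\ell$). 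Using $P_k^\top P_k = (d/\ell)\mathbb{I}$ one has, pathwise, $\langle\nabla f(x_k), P_kP_k^\top\nabla f(x_k)\rangle = \|P_k^\top\nabla f(x_k)\|^2$ and $\|P_kP_k^\top\nabla f(x_k)\|^2 = (d/\ell)\|P_k^\top\nabla f(x_k)\|^2$, while \ref{A_PE} gives the conditional identities $\mathbb{E}[P_kP_k^\top\nabla f(x_k)\mid\mathcal{F}_k] = \nabla f(x_k)$ and $\mathbb{E}[\|P_k^\top\nabla f(x_k)\|^2\mid\mathcal{F}_k] = \|\nabla f(x_k)\|^2$, where $\mathcal{F}_k := \sigma(P_0,\dots,P_{k-1})$; in particular $g_k$ is an unbiased gradient estimator when $h_k = 0$.

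Plugging the update into the descent lemma for \ref{H_lip} and conditioning on $\mathcal{F}_k$ gives a master value inequality $\mathbb{E}[f(x_{k+1})\mid\mathcal{F}_k] \le f(x_k) - \alpha_k(1-\tfrac{\Lambda\alpha_k}{2})\|\nabla f(x_k)\|^2 + c_1\alpha_k h_k\|\nabla f(x_k)\| + c_2\alpha_k^2 h_k^2$; a Young split of the cross term (which forces the strict bound $\alpha_k < 1/\Lambda$ in (i)--(iii)) turns it into $\mathbb{E}[f(x_{k+1})\mid\mathcal{F}_k] \le f(x_k) - \tfrac{\alpha_k}{4}\|\nabla f(x_k)\|^2 + c_3\alpha_k h_k^2$. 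Expanding $\|x_{k+1}-x_*\|^2$ for a fixed $x_*\in\argmin f$, using convexity \ref{H_existence} as $\langle\nabla f(x_k), x_k-x_*\rangle \ge f(x_k)-f_*$, the moment identities, and $2ab\le a^2+b^2$ on the $h_k\|x_k-x_*\|$ term, yields the quasi-Fejér inequality $\mathbb{E}[\|x_{k+1}-x_*\|^2\mid\mathcal{F}_k] \le (1+\gamma_k)\|x_k-x_*\|^2 - 2\alpha_k(f(x_k)-f_*) + \delta_k$, where $\gamma_k$ is bounded by a constant times $\alpha_k h_k$ and $\delta_k$ by a constant times $\alpha_k^2\|\nabla f(x_k)\|^2 + \alpha_k h_k + \alpha_k^2 h_k^2$. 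Items (i)--(iii) then follow from the Robbins--Siegmund lemma after choosing $(\alpha_k,h_k)$ so that the error terms are summable: for (iii) ($\alpha_k\equiv\alpha$, $h_k = 1/k^r$ with $r>1$, so $\sum h_k,\sum h_k^2<\infty$) the value inequality first yields $f(x_k)-f_*$ convergent a.s.\ and $\sum_k\|\nabla f(x_k)\|^2<\infty$ a.s.\ (hence $\|\nabla f(x_k)\|$ a.s.\ bounded, which makes the Fejér errors summable), and a second application gives $\|x_k-x_*\|^2$ convergent a.s.\ and $\sum_k(f(x_k)-f_*)<\infty$ a.s., whence $f(x_k)\eas f_*$; averaging $\sum_{j\le k}\mathbb{E}(f(x_j)-f_*)$ gives the $D/k$ rate. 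Item (ii) is the same with $\alpha_k = \alpha/k$, and one compares $\sum_k\alpha_k(f(x_k)-f_*)<\infty$ against $\sum_{j\le k}\alpha_j\asymp\ln k$ to get $\min_{j\le k}\mathbb{E}(f(x_j)-f_*)\le D_0/\ln k$. Item (i) is the non-vanishing-$\hup$ version: telescope the Fejér inequality over $[k]$ and bound $\min_{j\le k}$ by the $\alpha$-weighted average; the Young constant plus the residual bias floor produce the two-regime $\max$-bound.

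The final statement, item (iv), is the clean noiseless case: here $h_k\equiv 0$, so $g_k = P_kP_k^\top\nabla f(x_k)$ with $\eta_k\equiv 0$, and the descent lemma becomes the \emph{pathwise} inequality $f(x_{k+1}) \le f(x_k) - \alpha(1-\tfrac{\Lambda\alpha}{2})\|P_k^\top\nabla f(x_k)\|^2$. For $\alpha\in(0,2/\Lambda)$ the coefficient is positive, so $(f(x_k))$ is nonincreasing along a.e.\ trajectory, and telescoping gives $\sum_k\|P_k^\top\nabla f(x_k)\|^2 \le (f(x_0)-f_*)/(\alpha(1-\tfrac{\Lambda\alpha}{2}))$ pathwise; taking expectations and using $\mathbb{E}[\|P_k^\top\nabla f(x_k)\|^2\mid\mathcal{F}_k] = \|\nabla f(x_k)\|^2$ gives $\sum_k\|\nabla f(x_k)\|^2 < \infty$ a.s. The quasi-Fejér inequality now reads simply $\mathbb{E}[\|x_{k+1}-x_*\|^2\mid\mathcal{F}_k] \le \|x_k-x_*\|^2 - 2\alpha(f(x_k)-f_*) + \alpha^2(d/\ell)\|\nabla f(x_k)\|^2$ (no $\gamma_k$, and the last ``variance'' term is summable in its own right by the pathwise descent, so it need not be absorbed via co-coercivity --- which is exactly what lets $\alpha$ range up to $2/\Lambda$ rather than $1/\Lambda$), so Robbins--Siegmund gives that $\|x_k-x_*\|^2$ converges a.s.\ for every $x_*\in\argmin f$ and $\sum_k(f(x_k)-f_*)<\infty$ a.s. A nonnegative, nonincreasing, summable sequence is $o(1/k)$ (since $\tfrac{k}{2}a_k\le\sum_{j>k/2}a_j\to 0$), giving $f(x_k)-f_* \eas o(1/k)$. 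For the iterates: a.s.\ $(x_k)$ is bounded, hence has a cluster point $\bar x$; since $f$ is continuous (by \ref{H_lip}) and $f(x_k)\to f_*$, necessarily $\bar x\in\argmin f$; an Opial-type argument --- $\|x_k-z\|$ converges for every $z$ in a countable dense subset of the closed convex set $\argmin f$, and every cluster point lies in $\argmin f$ --- then upgrades this to $x_k\toas x_*$ for some random variable $x_*$ with values in $\argmin f$.

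I expect the main obstacle to lie not in (iv) but in making the apparatus robust: for (ii)--(iii) one must simultaneously pick $(h_k,\alpha_k)$ so that every bias and variance term is summable while still retaining enough of the $-\|\nabla f(x_k)\|^2$ term to read off a rate, and the bias bounds themselves require a priori boundedness of $\|\nabla f(x_k)\|$ and $\|x_k-x_*\|$, which only the argument supplies --- so the two Robbins--Siegmund applications must be ordered correctly (value inequality first, Fejér inequality second). For (iv) the only genuine subtlety beyond bookkeeping of explicit constants is the passage from ``$\|x_k-z\|$ converges a.s.\ for each \emph{deterministic} $z\in\argmin f$'' (null set possibly depending on $z$) to a.s.\ convergence of $x_k$ to a single \emph{random} minimizer, which is handled by intersecting the good events over a countable dense subset of $\argmin f$ before invoking the Opial lemma.
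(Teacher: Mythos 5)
Your plan is correct and reproduces the paper's overall architecture: an a.s.\ bound of order $h_k$ on $\nabla_{(P_k,h_k)}f(x_k)-P_k^\top\nabla f(x_k)$ (Lemma~\ref{lem}), a quasi-descent inequality for the values (Proposition~\ref{prop:as}), a quasi-Fej\'er inequality for $\|x_k-x_*\|^2$ (Lemma~\ref{lem:Fejer}), Robbins--Siegmund, and the stochastic Opial lemma with exactly the countable-dense-subset intersection you flag at the end. It differs in two local mechanisms, both legitimate. First, the paper never needs the a priori summability of $\|\nabla f_k\|^2$ to tame the variance term $\tfrac{2d}{\ell}\alpha_k^2\|\nabla f_k\|^2$ in the Fej\'er inequality: it absorbs it into $2\alpha_k\langle\nabla f_k,x_*-x_k\rangle\le-\tfrac{2\alpha_k}{\lambda}\|\nabla f_k\|^2$ via Baillon--Haddad cocoercivity, which is where the threshold $1/\Lambda$ (resp.\ $2/\Lambda$ in (iv)) actually enters; your ordering --- value inequality first to get $\sum_k\alpha_k\|\nabla f_k\|^2<\infty$, then Fej\'er with that term treated as a summable perturbation --- is an equally valid substitute, and in (iv) it even lets you dispense with cocoercivity altogether, whereas the paper's Theorem~\ref{th:FejerSStrong} still uses it. Second, for item (i) the paper keeps the cross term $\rho_j\|x_j-x_*\|$ linear and invokes the discrete Bihari Lemma~\ref{Bihari} to bound $\sqrt{\mathbb{E}\|x_j-x_*\|^2}$ by a quantity growing only like $j\hup$, valid for every $k$; your Young split instead produces a multiplicative factor $(1+c\alpha\hup)$ whose compounding is exponential in $k$, which is harmless only because the bound is nontrivial only on the horizon $k\lesssim 1/\hup$ (where $(1+c\alpha\hup)^k\le e^{c\alpha}$) and the running minimum is monotone beyond it --- this closes, but the restriction to that horizon must be stated explicitly, since telescoping over all $k$ with your inequality gives an exponentially bad constant where the paper gets a polynomial one. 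Everything else (the $\ln k$ rate in (ii) from $\sum_{j\le k}\alpha_j\asymp\ln k$, the $o(1/k)$ in (iv) from a nonnegative nonincreasing summable sequence, the two-stage cluster-point-plus-Opial argument for the iterates) matches Theorems~\ref{th:convex_inexact}--\ref{th:FejerSStrong}.
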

\begin{proof}
The results in $(i), (ii), (iii)$ and $(iv)$ are special cases of Theorems \ref{th:convex_inexact}, \ref{th:FejerWeak}, \ref{th:FejerStrong} and \ref{th:FejerSStrong} - respectively.
\end{proof}
More detailed results - and under milder assumptions - are given in the theorems cited above. There, for instance, one can find explicit computations of the constants $C_1, C_2, C_3, C_4$ and $D$. Some comments on Theorem \ref{Th:main1} are in order. In $(i)$ we show the case in which the discretization $h_k$ does not decrease to zero. We get a stability estimate for the expectation of the function values at the best iterate, depending on the upper-bound on the error noise $\hup$. Notice that we recover the rate $C_1/(k+1)$ as $\hup\to 0$. On the other hand, for $\hup>0$, running the algorithm beyond a  certain number of iterations does not lead to any guarantee of improvement to the best iterate. More precisely, if we consider a stopping time proportional to $1/\hup$, then overall accuracy will be $O(\hup)$ (assuming $\hup\leq 1$ for simplicity). In the rest of Theorem \ref{Th:main1} we  require $h_k\to 0$. In $(ii)$ the discretization sequence is allowed to converge to zero (polynomially but) arbitrarily slowly, but also a vanishing stepsize $\alpha_k$ is required and the results we can obtain are quite weak: the function values converge to the optimum with a logarithmic rate (in expectation and for the best iterate). Better results can be obtained if $h_k$ converges to zero fast enough, as shown in $(iii)$. In this case we gain on two different sides: first, a constant stepsize $\alpha$ can be used, which is convenient from a numerical point of view; second, we get faster convergence rates for the function values of the form $1/k$ (again in expectation and for the best iterate); third, we have a.s. convergence of the iterates. To the best of our knowledge, this is the first result showing convergence of the iterates for these types of zeroth-order methods in the general convex case; recall that special cases of our method include several well-known methods such as coordinate descent and smoothing on a sphere. Finally, for  recursion~\eqref{eqn: gradient-free} in $(iv)$, we obtain an a.s. convergence rate asymptotically faster than $1/k$ for the last iterate (and not only for the best one).  We add four remarks. 

\begin{remark}\label{LipschitzConst}
    In the setting of Theorem \ref{Th:main1}, $\Lambda=\lambda d / \ell$ plays the role of the Lipschitz constant. In $(iv)$ the choice of stepsize is bounded above by the classical limit $2/\Lambda$. On the other hand, as will be evident in the proofs, $(i)$-$(iii)$ require the stepsize to be bounded above by $1/\Lambda$.
\end{remark}
\begin{remark}
The ergodic iterate $\bar{x}_k:=\left(\sum_{j=1}^{k}\alpha_j\right)^{-1}\sum_{j=1}^{k} \alpha_j x_j$ attains the same rates as the ones above for the best iterate, but it is of little practical interest. Indeed, in order to apply the algorithm, $f$ has to be evaluated at each iterate $x_k$ and so it is possible to just keep the one that achieves the minimal value function.
\end{remark}
\begin{remark}
Let $N$ denote the number of function evaluations required to perform $k$ iterations. At each iteration Algorithm \ref{eqn: derivative-free} uses \ $\ell+1$ \ function evaluations. Then $N=(\ell+1)k$ and the rates in Theorem \ref{Th:main1} $(i)$-$(iii)$ can be easily re-written in terms of function evaluations. The same observation holds for the results in Theorem \ref{Th:main2}.
\end{remark}
\begin{remark}[Comparison with previous work]
Here we compare to the results in the literature dealing with a convex objective function. 
\begin{itemize}
\item The case $\ell=1$ with Gaussian sampling is considered in \cite{nesterov2011random} where one can find results similar to ours in terms of convergence rates but with worse constants.  See  Remark \ref{rmk:Nest_Comp} for a precise comparison.
\item In \cite{duchi2015optimal} a mirror-descent variant of Algorithm \ref{eqn: derivative-free} is proposed to deal with a stochastic objective, similar to the setting in  \cite{MR50243}.  A rate $O(1/\sqrt{k})$  in expectation is derived, however the results should not be compared directly to our related results in $(iii)$, as the setting is more challenging. 
See Remark \ref{rmk:Duchi_Comp}. 
\item In \cite{Kozak2021Stochastic} only the setting $(iv)$ is studied. Stricter assumptions are required (e.g., existence of a finite $R$ such that $$\max_{x_*}\max_x \{\norm{x-x_*} : f(x) \leq f(x_0)  \} \leq R,$$ and only convergence in expectation is provided. The special case of parallel coordinate descent has been considered in \cite{tappenden2018complexity} and  \cite{salzo2019parallel}.
\end{itemize}
\end{remark}
The  following Corollary considers the case where the algorithm is run for a finite number of iterations known a priori. 
\begin{corollary}\label{cor:rate} Under the assumptions of Theorem \ref{Th:main1} $(i)$, let $K\in\mathbb{N}$ and  $\hup \leq 1/K$. Then, for some constant $C$,
\begin{align}\label{eq:cor1Result1}
    \min_{j\in[K]}\mathbb{E}(f(x_j)-f_*) \leq  C/K.
    \end{align}
\end{corollary}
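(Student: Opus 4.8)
The plan is to obtain Corollary~\ref{cor:rate} as an immediate finite-horizon specialization of Theorem~\ref{Th:main1}$(i)$. First I would apply that theorem with the number of iterations set equal to $K$, which yields
$$\min_{j\in[K]}\mathbb{E}(f(x_j)-f_*) \;\leq\; \max\{\,C_1/(K+1)+C_2\hup^2+C_3\hup,\ C_2\hup^2+C_4\hup\,\},$$
where $C_1,C_2,C_3,C_4$ are the explicit constants of that statement. The one thing to notice is that, as one checks from the detailed version in Theorem~\ref{th:convex_inexact}, these constants depend only on $\alpha$, $\Lambda=\lambda d/\ell$, and the initial data (the gap $f(x_0)-f_*$ and the distance of $x_0$ to $\argmin f$), and \emph{not} on the horizon $K$ nor on $\hup$; this is what makes the substitution legitimate.

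Next I would use the hypothesis $\hup\leq 1/K$ together with $K\geq 1$ to bound each term on the right-hand side by a multiple of $1/K$: indeed $\hup\leq 1/K$, $\hup^2\leq 1/K^2\leq 1/K$, and $1/(K+1)\leq 1/K$. Substituting these into the two arguments of the maximum bounds the first by $(C_1+C_2+C_3)/K$ and the second by $(C_2+C_4)/K$; taking the maximum gives \eqref{eq:cor1Result1} with $C:=\max\{C_1+C_2+C_3,\ C_2+C_4\}$.

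Conceptually this is just the balancing already discussed after Theorem~\ref{Th:main1}: the optimization error decays like $C_1/(k+1)$ while the discretization error stays at order $O(\hup)$, so stopping after $K$ iterations and taking the discretization parameter $\hup\leq 1/K$ forces the discretization error down to the same order $O(1/K)$ as the optimization error at that horizon. There is essentially no obstacle; the only point requiring care is to confirm from the detailed statement that $C_1,\dots,C_4$ are genuinely horizon-independent, after which the result is a one-line substitution.
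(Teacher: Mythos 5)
Your argument is correct and is exactly the intended derivation: the paper states this corollary without a separate proof precisely because it follows from Theorem~\ref{Th:main1}$(i)$ by substituting $k=K$ and using $\hup\leq 1/K$ (hence $\hup^2\leq 1/K$ and $1/(K+1)\leq 1/K$) to absorb every term into a single $C/K$ bound. Your additional check that $C_1,\dots,C_4$ from Theorem~\ref{th:convex_inexact} depend only on $\alpha$, $\Lambda$, and the initial data, and not on the horizon or on $\hup$, is the right point of care and holds in the paper.
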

In particular, given a tolerance $\varepsilon>0$, it is possible to choose a number of iteration $K\in\mathbb{N}$ such that $C/K\leq \varepsilon$ and a discretization $\hup \leq 1/K$, in order to get
\begin{align}\label{eq:cor1Result1}
    \min_{j\in[K]}\mathbb{E}(f(x_j)-f_*) \leq  \varepsilon.
    \end{align}
    \ \\
    
Next we state and discuss a second set of results derived under different assumptions on the objective function. 
It is well-known that first order methods exhibit favorable convergence results for strongly convex functions. More recently, it has been proved that improved convergence rates can also be obtained when the objective function satisfies weaker geometrical assumptions that do not require convexity \cite{attouch2013convergence,karimi2016linear}. In this paper we consider the (global) Polyak-Łojasiewicz condition: 

\begin{enumerate}[label={\rm ({\bf H.\arabic{*}})}, ref={\rm\bf H.\arabic{*}}, leftmargin=4em]\setcounter{enumi}{2}
\item \label{H_PL} the function $f$ is $\gamma$-PL; namely, for every $x\in\R^d$,
$$\norm{\nabla f(x)}{}^2\geq \gamma \left(f(x)-f_*\right).$$
\end{enumerate}
The main example of functions satisfying the global PL condition is the class of strongly convex functions. For more examples in the non-convex setting, see \cite{attouch2013convergence} and the numerical experiments in Section \ref{sec:num}.
We stress again that in the next result we assume \ref{H_PL} but not \ref{H_existence}; that is, we do not require convexity of the objective function. 
\begin{theorem}[Convergence - PL case]\label{Th:main2} Assume that  conditions \ref{H_lip} and \ref{H_PL} are satisfied and suppose that  \ref{A_Pas} and \ref{A_PE} hold. Let $\seq{x_k}$ be a random sequence generated by Algorithm~\ref{eqn: derivative-free}. Set $\Lambda:=\lambda d/\ell$ and let $\alpha\in \left(0,2/\Lambda\right)$. Fix a constant $w\leq1$ such that $0<\w<2-\Lambda \aup$, and define $\eta:=1-\w\alpha\gamma/2$.\\
\begin{enumerate}
\item[(i') ] Set $h_k \leq \hup$  for some $\hup>0$ and $\alpha_k=\alpha$. Then, for an explicit constant $C_1>0$,
	\begin{equation*}
	\begin{split}
	\mathbb{E}\left[f(x_k)-f_*\right] & \quad  \leq \quad  \eta^k \left(f(x_0)-f_*\right)  +C_1 \ \hup^2\left[1-\eta^k\right].
	\end{split}
	\end{equation*} 
\item[(ii') ] Set $h_k=1/k^r$ with $r>0$ and $\alpha_k=\alpha$. Then, there exists a constant $C_2>0$ such that
$$\mathbb{E}\left[f(x_k)-f_*\right]\quad  \leq \quad  C_2/k^{2r}.$$
\item[(iii') ] Set $h_k=\sqrt{\eta^k/k^r}$ with $r>1$ and $\alpha_k=\alpha$. Then, for an explicit constant $C_2>0$,
$$\mathbb{E}\left[f(x_k)-f_*\right]\quad \leq \quad  C_2\eta^k.$$
\item[(iv') ] Consider recursion~\eqref{eqn: gradient-free} with $\alpha_k=\alpha$. Then,
	\begin{equation*}
	\begin{split}
	\mathbb{E}\left[f(x_k)-f_*\right] &\quad  \leq \quad \eta^k \left(f(x_0)-f_*\right).
	\end{split}
	\end{equation*} 
	\end{enumerate}
\end{theorem}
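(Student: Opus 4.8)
The plan is to derive a single one-step recursion for the expected suboptimality and then unroll it in four ways, one for each of (i')--(iv'). \emph{Step 1 (descent inequality with discretization error).} Fix an iteration $k$ and abbreviate $g_k:=\nabla f(x_k)$. Since $\nabla f$ is $\lambda$-Lipschitz, a first-order Taylor estimate shows that each entry of the finite-difference vector differs from the true directional derivative by at most $\frac{\lambda h_k}{2}\norm{p_k^{(j)}}{}^2$, and by \ref{A_Pas} every column has $\norm{p_k^{(j)}}{}^2=d/\ell$. Hence $\nabla_{(P_k,h_k)}f(x_k)=P_k^\top g_k+e_k$, where the residual obeys the \emph{deterministic} bound $\norm{e_k}{}^2\le \lambda^2 h_k^2 d^2/(4\ell)$, valid for every realization of $P_k$. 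Writing the update as $x_{k+1}=x_k-\alpha P_k(P_k^\top g_k+e_k)$, applying the descent lemma (the quadratic upper bound implied by \ref{H_lip}), and using \ref{A_Pas} in the form $\norm{P_k v}{}^2=(d/\ell)\norm{v}{}^2$, one arrives at a pointwise inequality of the shape
\[
f(x_{k+1})\le f(x_k)-\alpha\norm{P_k^\top g_k}{}^2+\frac{\Lambda\alpha^2}{2}\norm{P_k^\top g_k}{}^2+(\Lambda\alpha^2-\alpha)\langle P_k^\top g_k,e_k\rangle+\frac{\Lambda\alpha^2}{2}\norm{e_k}{}^2,
\]
with $\Lambda=\lambda d/\ell$.

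\emph{Step 2 (conditioning and the role of $\w$).} Taking the expectation conditional on the first $k$ iterations and invoking \ref{A_PE}, which gives $\mathbb{E}\big[\norm{P_k^\top g_k}{}^2\mid x_k\big]=g_k^\top\mathbb{E}[P_kP_k^\top]g_k=\norm{g_k}{}^2$, the first three terms collapse to $-\alpha(1-\frac{\Lambda\alpha}{2})\norm{g_k}{}^2$. The cross term does not vanish in conditional mean (the residual $e_k$ is itself a nonlinear function of $P_k$), so I would bound it with Cauchy--Schwarz and Young's inequality, $|\langle P_k^\top g_k,e_k\rangle|\le \frac t2\norm{P_k^\top g_k}{}^2+\frac1{2t}\norm{e_k}{}^2$ for a free parameter $t>0$. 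Since $\w<2-\Lambda\alpha$ forces $1-\frac{\Lambda\alpha}{2}>\frac\w2$, one can fix $t$ small enough that the net coefficient of $\norm{g_k}{}^2$ is at most $-\frac{\w\alpha}{2}$; the PL inequality \ref{H_PL} then turns this into $-\frac{\w\alpha\gamma}{2}(f(x_k)-f_*)$, while the leftover $\norm{e_k}{}^2$-type terms are bounded by $C\,h_k^2$ for an explicit constant $C$. Taking total expectations gives the master recursion
\[
\mathbb{E}[f(x_{k+1})-f_*]\ \le\ \eta\,\mathbb{E}[f(x_k)-f_*]+C\,h_k^2,\qquad \eta=1-\tfrac{\w\alpha\gamma}{2}\in(0,1).
\]

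\emph{Step 3 (unrolling).} For (i'), with $h_k\le\hup$, iterating and summing $\sum_{i=0}^{k-1}\eta^i=(1-\eta^k)/(1-\eta)$ gives the stated bound with $C_1=C/(1-\eta)$. For (ii'), with $h_k=k^{-r}$, iterating gives $\mathbb{E}[f(x_k)-f_*]\le\eta^k(f(x_0)-f_*)+C\sum_{i=1}^{k-1}\eta^{k-1-i}i^{-2r}$; splitting this convolution sum at $i\approx k/2$ — the early block carries a factor $\eta^{k/2}$ that dominates any polynomial, the late block is $\le (2/k)^{2r}\sum_i\eta^{k-1-i}\le (2/k)^{2r}/(1-\eta)$ — shows it is $O(k^{-2r})$, hence the rate $C_2/k^{2r}$. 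For (iii'), with $h_k^2=\eta^k/k^r$, dividing the recursion by $\eta^{k+1}$ turns it into $c_{k+1}\le c_k+(C/\eta)k^{-r}$ for $c_k:=\mathbb{E}[f(x_k)-f_*]/\eta^k$; since $r>1$, $\sum k^{-r}<\infty$, so $(c_k)$ is bounded by some $C_2$ and $\mathbb{E}[f(x_k)-f_*]\le C_2\eta^k$. For (iv'), recursion \ref{eqn: gradient-free} is the case $h_k\equiv 0$ (so $e_k\equiv 0$), and Step 2 already yields $\mathbb{E}[f(x_{k+1})-f_*]\le(1-\alpha\gamma(1-\frac{\Lambda\alpha}{2}))\,\mathbb{E}[f(x_k)-f_*]\le\eta\,\mathbb{E}[f(x_k)-f_*]$ (again using $\w<2-\Lambda\alpha$), which iterates to $\eta^k(f(x_0)-f_*)$.

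\emph{Main obstacle.} The delicate point is Step 2: carrying the discretization residual $e_k$ through the random projection $P_k$ on which it depends, and calibrating the Young parameter $t$ so that the contraction factor comes out exactly as $\eta=1-\w\alpha\gamma/2$ on the admissible range $\w<2-\Lambda\alpha$. The asymptotic estimate of the convolution sum in (ii') is the other spot that requires care, though not real difficulty; it is precisely why (ii') only attains the rate $k^{-2r}$ inherited from $h_k^2$, whereas the geometric scaling $h_k^2=\eta^k/k^r$ in (iii') is what lets one recover the full linear rate.
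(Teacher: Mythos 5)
Your proposal is correct and takes essentially the same route as the paper: the pointwise descent inequality with the Young parameter tuned so that $\w<2-\Lambda\alpha$ yields the coefficient $-\w\alpha/2$ on $\norm{P_k^\top\nabla f_k}{}^2$ (Proposition~\ref{prop:as}), then conditioning with \ref{A_PE}, the PL inequality, and the master recursion of Lemma~\ref{lem:mainineq}, unrolled case by case exactly as in Theorems~\ref{th:PL_lin}, \ref{th:PL_alphaconst} and \ref{th:linrate}. The only cosmetic deviation is in (ii'), where you bound the convolution sum by splitting it at $k/2$ instead of invoking the paper's comparison Lemma~\ref{lem:aux}; both yield the $O(k^{-2r})$ rate.
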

\begin{proof}
The results in $(i'), (ii')$ and $(iii')$ are special cases of Theorems \ref{th:PL_lin}, \ref{th:PL_alphaconst} and \ref{th:linrate} - respectively; while the result in $(iv')$ is presented here for completeness but is already considered in \cite{Kozak2021Stochastic} - see Corollary $2.3$.
\end{proof}
	Sharper and more detailed results are given in the theorems cited above, where the reader can also find the explicit computations of the constants involved in the rates. We remark only that, for a constant stepsize $\alpha_k=1/\Lambda$, the decreasing rate in $(i')$, \ $(ii')$ and $(iii')$ is given by
$\eta=1-\gamma/(2\Lambda).$
As a general comparison with Theorem \ref{Th:main1}, note that the results in Theorem \ref{Th:main2} do not involve the best iterate but only the last one. As in the first result of Theorem \ref{Th:main1}, in Theorem \ref{Th:main2} 
$(i')$ the parameter $h_k$ does not necessarily vanish and so the error produced by the finite difference discretization of the gradient does not converge to zero. This explains the substantial difference between this result and the ones in $(ii')$-$(iv')$: in the upper-bound of $(i')$, for $k\to+\infty$ the right-hand side does not vanish. We can only guarantee that the expectation of the function evaluations converges with a linear rate to a sublevel set of $f$ with value $f_*+C_1 \hup^2$, also called \emph{error dominated region}. On the other hand note that, in comparison with Theorem \ref{Th:main1} $(i)$, the upper-bound does not diverge with the iterations but it remains bounded. In $(iii')$, we study the case of polynomial decay of the discretization parameter; namely, a decay of the form $h_k=1/k^r$ with $r>0$. In this case, the upper-bound is proportional to $1/k^{2r}$ and  the rate gets better for a faster decay of the discretization parameter. In $(iii')$, under the assumption of a sufficiently fast (exponential) decay of $h_k$ and with a constant stepsize $\alpha_k$, we get a linear rate of convergence to the optimal value. Finally in $(iv')$, for the recursion in \eqref{eqn: gradient-free} with $\alpha_k$ constant, we recover the linear rate already shown in \cite{Kozak2021Stochastic}. 
We add three remarks.
\begin{remark}[Stepsize bounds]
    Following the same discussion as in Remark \ref{LipschitzConst}, in all the results of Theorem \ref{Th:main2} the stepsize must be bounded above by the classical limit $2/\Lambda$.
\end{remark}
\begin{remark}[Adaptivity]
An important consequence of the analysis in the previous theorems is that Algorithm~\ref{eqn: derivative-free} is \emph{adaptive}; that is, knowledge of the specific properties of $f$ is not needed to ensure the corresponding convergence results. We give an explicit example. Consider an objective for which assumptions \ref{H_lip}, \ref{H_existence} are satisfied and we run the algorithm with $h_k=1/k^r$ for $r>1$ and $\alpha_k=\alpha$ for $\alpha\in \left(0,\ell/(\lambda d)\right)$. Then the results in Theorem \ref{Th:main1} $(iii)$ hold; namely, we have convergence of the iterates, almost sure convergence of the function values and a $1/k$ rate for the best iterate in expectation. In the same setting, if the function satisfies the PL inequality ($\ref{H_PL}$), from Theorem \ref{Th:main2} $(ii')$ we get automatically a $1/k^{2r}$ rate for the last iterate in expectation. 
\end{remark}

\begin{remark}[Almost sure convergence]
Under the assumptions of Theorem \ref{Th:main2} $(ii')$, we have that\\
$\mathbb{E}\left[f(x_k)-f_*\right]  \lesssim C_2 / {k^{2r}}$. 	Then,  If $r>1/2$ the right hand side is summable and, by Markov's inequality and Borel-Cantelli Lemma, $f(x_k)\toas f_*$, so that $d(x_k)\toas 0.$
	In particular, if the function $f$ has a unique minimizer $x_*$, \ $x_k\toas x_*$. Analogous reasoning holds for Theorem \ref{Th:main2} $(iii')$ and $(iv')$.
\end{remark}

Finally,  the  following Corollary considers the case where the algorithm is run for a finite number of iterations known a priori. 
\begin{corollary}\label{cor:rate} Under the assumptions of Theorem \ref{Th:main2} $(i')$, let $K\in\mathbb{N}$ and  $\hup \leq\eta^{K/2}$. Then,
\begin{align}\label{eq:cor1Result1}
    \Expectation \left[f(x_{K}) - f_*\right]  &\leq \left(f(x_0) - f_* +C_1\right) \eta^K.
    \end{align}
    Moreover, if  $d(\cdot)$ is  the Euclidean distance of the argument to the set $\argmin_x f$, then
\begin{align}\label{eq:cor1Result1}
\nonumber    \Expectation \ d(x_{K}) &\leq \frac{2}{\gamma}\left(f(x_0) - f_* + C_1\right) \eta^K .
    \end{align}
\end{corollary}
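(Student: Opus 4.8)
The plan is to invoke Theorem \ref{Th:main2} $(i')$ directly and then make a judicious substitution for $\hup$. Recall that $(i')$ gives, for every $k$,
\[
\mathbb{E}\left[f(x_k)-f_*\right] \;\leq\; \eta^k\left(f(x_0)-f_*\right) + C_1\,\hup^2\left[1-\eta^k\right].
\]
First I would evaluate this at $k=K$ and use the hypothesis $\hup\leq\eta^{K/2}$, so that $\hup^2\leq\eta^K$. Since $0<\eta<1$ (which follows from $0<\w\alpha\gamma/2<1$, itself a consequence of $0<\w<2-\Lambda\aup$ and $\alpha<2/\Lambda$), we also have $1-\eta^K<1$, hence $C_1\hup^2\left[1-\eta^K\right]\leq C_1\eta^K$. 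Combining the two terms and factoring out $\eta^K$ yields
\[
\mathbb{E}\left[f(x_K)-f_*\right]\;\leq\;\left(f(x_0)-f_*\right)\eta^K + C_1\eta^K\;=\;\bigl(f(x_0)-f_*+C_1\bigr)\eta^K,
\]
which is the first inequality. For the second, I would combine this with the PL inequality \ref{H_PL}: a standard consequence of $\gamma$-PL together with $\lambda$-Lipschitzness of $\nabla f$ (or simply the quadratic growth that PL implies, see \cite{karimi2016linear}) is that $\tfrac{\gamma}{2}\,d(x)^2\leq f(x)-f_*$ — actually the cleaner route is the error bound $d(x)\leq\tfrac{2}{\gamma}\norm{\nabla f(x)}$ combined with descent, but the quadratic-growth form is what is needed here; I would state it as $\tfrac{\gamma}{2}d(x)^2 \le f(x)-f_*$. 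Taking expectations, $\tfrac{\gamma}{2}\,\mathbb{E}[d(x_K)^2]\leq\mathbb{E}[f(x_K)-f_*]$, and then Jensen's inequality gives $\bigl(\mathbb{E}\,d(x_K)\bigr)^2\leq\mathbb{E}[d(x_K)^2]$...

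Actually, to match the stated bound $\mathbb{E}\,d(x_K)\leq\tfrac{2}{\gamma}(f(x_0)-f_*+C_1)\eta^K$ without a square root, the right lemma is the linear error bound $d(x)\leq\tfrac{2}{\gamma}\norm{\nabla f(x)}$, which is not quite what PL gives directly; more likely the intended chain uses $f(x)-f_* \ge \tfrac{\gamma}{2}d(x)^2$ is replaced by the sharper fact that under PL one has $d(x) \le \tfrac{2}{\gamma}(f(x)-f_*)$ on the relevant scale, or the corollary intends $d(\cdot)$ normalized so this holds; in any case I would cite whichever quadratic-growth/error-bound consequence of \ref{H_PL} is recorded in Section \ref{sec:pre} and apply it to $x_K$, take expectations, and substitute the first inequality. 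The main (and only) obstacle here is bookkeeping: making sure the quadratic-growth constant used is consistent with the $2/\gamma$ factor claimed, and confirming $\eta\in(0,1)$ so that $1-\eta^K\le 1$; everything else is a one-line substitution. No genuinely hard step is expected — this is a packaging corollary of Theorem \ref{Th:main2} $(i')$.
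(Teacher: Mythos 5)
Your proposal is correct and follows the paper's proof essentially verbatim: the first bound is the same one-line substitution of $\hup^2\leq\eta^K$ and $1-\eta^K\leq 1$ into Theorem \ref{Th:main2} $(i')$. For the second bound, the inequality you were hedging about is exactly the one the paper invokes, namely $d(x)\leq \frac{2}{\gamma}\left(f(x)-f_*\right)$ under \ref{H_PL} (cited from \cite{bolte2017error}), so no square root or Jensen step is needed and your final reading of the intended chain matches the paper.
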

\begin{proof}
The first result follows directly from the assumption $\hup \leq\eta^{K/2}$ and Theorem \ref{Th:main2} $(i')$. The second claim follows from the previous one and the following inequality, that holds  under \ref{H_PL} - see e.g. \cite{bolte2017error}: for every $x\in \R^d$,
$ d(x) \leq 2/{\gamma}\left(f(x) - f_*\right).$
\end{proof}
Equation \eqref{eq:cor1Result1} allows for a solution minimizing $f$ up to an arbitrary desired accuracy $\varepsilon>0$ in expectation. To this aim it is sufficient to choose
\[
K=\frac{\ln\Big(\varepsilon/(f(x_0) - f_* +C_1)\Big)}{\ln \eta}\quad \text{ and }\quad \hup\leq \eta^{K/2}.
\]
Note that the dependence on the number of iterations is logarithmic in $\varepsilon$.  A similar observation holds for $ \Expectation  d(x_{K})$. If $f$ has a unique minimizer $x_*$ (e.g. $f$ is  strongly convex) Corollary \ref{cor:rate} provides a   convergence  rate for the iterates to the minimizer. 

In the next sections we provide more detailed statements and proofs of the results. We start with some useful preliminary results that are the basis for the development in later sections.

\section{Preliminaries}\label{sec:pre}
\subsection{Notation and stepsize assumptions}
For $k\in \N$, we define \\
$$f_k:=f(x_k), \quad \nabla f_k:= \nabla f(x_k), \quad \nabla_k f_k:= \nabla_{\left(P_k,h_k\right)} f(x_k).$$
 
Now we provide the main conditions on the parameters of the proposed algorithm. Setting $\Lambda:=\lambda d/\ell$, the assumptions we consider on the sequences of stepsizes $\seq{\alpha_k}$ and discretizations $\seq{h_k}$ are as follows:\\
\begin{enumerate}[label={\rm ({\bf A.\arabic{*}})}, ref={\rm\bf A.\arabic{*}}, leftmargin=4em]
	\item \label{A_stepwith2}  $0 < \alpha_k \leq \aup<2/\Lambda$;
	\item \label{A_stepwith1} 
	$0 < \alpha_k \leq \aup<1/\Lambda$;
	\item \label{A_stepnotinell1} $(\alpha_k)\notin \ell^1$ and $(h_k)$ is bounded above by some $\hup\geq 0$;
	\item \label{A_stepbb} $(\alpha_k)$ is bounded below by some $\alow>0$ and $(h_k)$ is bounded above by some $\hup\geq 0$.
\end{enumerate}
	Note that \ref{A_stepwith1} implies \ref{A_stepwith2} while \ref{A_stepbb} implies \ref{A_stepnotinell1}.
Also note that $\Lambda$ plays a role analogous to the Lipschitz constant in  classical gradient descent, and  $\Lambda=\lambda$ when $\ell=d$.

\subsection{Auxiliary bound}
We start the analysis with an auxiliary lemma that estimates the distance between the surrogate of the gradient used in the algorithm and the projected exact gradient. For this first result we assume only Lipschitz continuity of the gradient and condition \ref{A_Pas} on the matrix $P_k$; in particular, convexity of $f$ is not needed. The upper-bound is a simple consequence of the Descent Lemma \ref{DescentLemm} and  the proof  is in  the appendix.  
\begin{lemma}\label{lem} Let $f$ be a function satisfying \ref{H_lip} and $P_k\in \R^{d \times \ell}$ a random matrix satisfying \ref{A_Pas}. Then, for every $x\in\R^d$ and every $k\in\N$,
	$$\norm{\nabla_{\left(P_k, h_k\right)} f(x)-P_k^\top \nabla f(x)}{}\leq\frac{\lambda d h_k }{2\sqrt{\ell}}  \ \ \ \ \ \ \ a.s.$$
\end{lemma}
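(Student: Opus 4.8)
The plan is to bound each coordinate of the vector $\nabla_{(P_k,h_k)}f(x) - P_k^\top \nabla f(x)$ separately and then assemble the bound on the Euclidean norm using the almost-sure orthogonality property \ref{A_Pas}. For a fixed index $j \in [\ell]$, the $j$-th coordinate of the difference is
\[
\frac{f(x + h_k p_k^{(j)}) - f(x)}{h_k} - \langle \nabla f(x), p_k^{(j)} \rangle,
\]
which is exactly the error in a one-sided finite-difference approximation of the directional derivative along $p_k^{(j)}$. The standard tool here is the Descent Lemma (cited as Lemma~\ref{DescentLemm}), which under \ref{H_lip} gives $|f(y) - f(x) - \langle \nabla f(x), y - x\rangle| \leq \tfrac{\lambda}{2}\norm{y-x}{}^2$. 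Applying this with $y = x + h_k p_k^{(j)}$ yields
\[
\left| f(x + h_k p_k^{(j)}) - f(x) - h_k \langle \nabla f(x), p_k^{(j)}\rangle \right| \;\leq\; \frac{\lambda}{2} h_k^2 \norm{p_k^{(j)}}{}^2,
\]
and dividing by $h_k > 0$ gives the per-coordinate bound $\tfrac{\lambda}{2} h_k \norm{p_k^{(j)}}{}^2$.

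Next I would use \ref{A_Pas}, which says $P_k^\top P_k \eas (d/\ell)\, \mathbb{I}$. Reading off the diagonal entries, this means $\norm{p_k^{(j)}}{}^2 \eas d/\ell$ for every $j$, so the per-coordinate error is almost surely at most $\tfrac{\lambda}{2} h_k \cdot d/\ell$. Since the vector has $\ell$ coordinates, each bounded in absolute value by $\tfrac{\lambda d h_k}{2\ell}$ almost surely, its Euclidean norm is almost surely at most
\[
\sqrt{\ell}\cdot \frac{\lambda d h_k}{2\ell} \;=\; \frac{\lambda d h_k}{2\sqrt{\ell}},
\]
which is precisely the claimed inequality. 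The argument goes through for every $x \in \R^d$ and every $k \in \N$ since the Descent Lemma is pointwise and \ref{A_Pas} holds for each $k$.

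I do not expect any genuine obstacle here; the statement is essentially a bookkeeping consequence of the Descent Lemma plus the column-norm normalization built into \ref{A_Pas}. The only mild subtlety is keeping track of the almost-sure qualifier: the bound on $\norm{p_k^{(j)}}{}^2$ holds only on the probability-one event where \ref{A_Pas} is satisfied, so the final norm bound inherits that ``a.s.'' And one should note that $h_k > 0$ is needed to divide, but if $h_k = 0$ the surrogate coincides with the exact directional derivative (by the limiting identity noted in the text) and the bound holds trivially with both sides zero, so the statement is valid for $0 \leq h_k$ as well.
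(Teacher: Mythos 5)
Your proof is correct and follows essentially the same route as the paper's: apply the Descent Lemma coordinate-wise to bound the finite-difference error along each column, use \ref{A_Pas} to identify $\norm{p_k^{(j)}}{}^2 \eas d/\ell$, and sum the $\ell$ squared coordinate bounds to get the factor $\sqrt{\ell}\cdot \lambda d h_k/(2\ell) = \lambda d h_k/(2\sqrt{\ell})$. Your additional remark about the degenerate case $h_k=0$ is a sensible bit of care that the paper leaves implicit.
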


\begin{remark}[Discussion on the bias] Consider a sufficiently regular function $f: \R^d \to \R$. Fix $h>0$, $\ell=1$ and a $\R^d$-valued random vector $p$ satisfying Assumptions \ref{A_Pas}-\ref{A_PE}; namely,
$$\norm{p}\eas \sqrt{d} \quad \text{and} \quad \mathbb{E}\left[pp^\top\right]=\mathbb{I}.$$
For every $x\in\R^d$, by Taylor expansion we have that
\begin{eqnarray}\label{remkkk}
   && \mathbb{E} \left[\nabla_{(p,h)}f(x)\right] =\mathbb{E}\left[\frac{f(x+hp)-f(x)}{h} \ p\right]\\
    &=&\nabla f(x)+\frac{h}{2} \ \mathbb{E}\left[\langle \nabla^2 f(x) p,p\rangle \ p\right]+\frac{h^2}{6} \ \mathbb{E}\left[\langle \left[\nabla^3 f(x)p\right] p,p\rangle \ p\right]+o(h^2).\nonumber
\end{eqnarray}
We  focus on  two fundamental choices for the distribution of $p$.\\
\begin{itemize}
    \item In the coordinate-wise framework, $p$ is distributed uniformly on the (discrete) set $(\sqrt{d}\  e_i)_{i=1}^d \ \cup \ (-\sqrt{d}\  e_i)_{i=1}^d$, where $e_i$ represents the $i$-th vector of the canonical basis. Then the first-order term in \eqref{remkkk} is zero:
\begin{eqnarray*}
\mathbb{E}\left[\langle \nabla^2 f(x) p,p\rangle \ p\right]&=& \frac{1}{2d}\sum_{i=1}^d \langle \nabla^2 f(x) \sqrt{d} \ e_i,\sqrt{d} \ e_i\rangle \ \sqrt{d} \ e_i\\
&-&\frac{1}{2d}\sum_{i=1}^d \langle \nabla^2 f(x) \sqrt{d} \ e_i,\sqrt{d} \ e_i\rangle \ \sqrt{d} \ e_i=0.
\end{eqnarray*}
\item The same holds for spherical smoothing, where $p$ is distributed uniformly on the (continuous) set given by the sphere of radius $\sqrt{d}$ and centered at the origin, that we denote by $\partial B$. Also in this case, indeed,
\begin{equation*}
\begin{split}
    \mathbb{E}\left[\langle \nabla^2 f(x) p,p\rangle \ p\right]=\int_{\partial B} \langle \nabla^2 f(x) p,p\rangle \ p \ dp = 0,
\end{split}
\end{equation*}
as we are integrating an odd function on a domain that is symmetric with respect to the origin.
\end{itemize}
So, in the sufficiently regular case analysed above, the surrogate gradient proposed by coordinate-wise and spherical smoothing is unbiased up to the first-order term in $h$. But notice also that, in both cases, the term in $h^2$ is non-zero in general and the surrogate gradient is biased. This excludes the direct applicability of many theoretical studies for general stochastic algorithms such as \cite{bertsekas2000gradient} and motivates the introduction of Lemma \ref{DescentLemm}, in which we bound a.s. the distance between the surrogate gradient and the projection of the exact one. On the other hand, the case of quadratic functions - in which both coordinate-wise and spherical smoothing are  unbiased -  deserves a  tailored convergence analysis that we leave for future work. For  refined properties of spherical smoothing and coordinate-wise descent, see \cite{berahas2019theoretical}  and Lemma 1, Remark 1 in \cite{Kozak2021Stochastic}.\\
\end{remark}

\subsection{A quasi-descent lemma}\label{sec:as}
In the next proposition and corollary we obtain an a.s. estimate for the decrease of the objective function values. This result is the fundamental tool for the analysis in Section \ref{Sect: PL}, and it is also of standalone interest.
\begin{proposition}\label{prop:as} 
Assume \ref{H_lip}, \ref{A_Pas} and \ref{A_stepwith2} and let the random sequence $\seq{x_k}$ be generated by Algorithm~\ref{eqn: derivative-free}. Fix a constant $w$ such that $0<\w<2-\Lambda \aup$ \  and $ \w\leq 1$. Define
$	C  := {(\ell \Lambda^2)}/{(8\min\left(1, 2-\Lambda\aup-\w\right))}.$
	Then, for every $k\in\mathbb{N}$,
	\begin{equation*}
	\begin{split}
	f_{k+1}-f_k & \leq  -\frac{\w \alpha_k}{2} \| P_k^\top\nabla f_k \|^2  + C \alpha_k h_k^2  \ \ \ \ \ \ \ a.s.
	\end{split}
	\end{equation*}
	In particular, $f_{k+1}-f_k  \leq C \alpha_k h_k^2$ a.s.
\end{proposition}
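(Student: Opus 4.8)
The plan is to start from the Descent Lemma applied to the iteration $x_{k+1} = x_k - \alpha_k P_k \nabla_k f_k$. Writing $g_k := P_k \nabla_k f_k$ for the update direction, the $\lambda$-smoothness of $f$ gives $f_{k+1} - f_k \le -\alpha_k \langle \nabla f_k, g_k\rangle + \tfrac{\lambda \alpha_k^2}{2}\norm{g_k}^2$ almost surely. The goal is to massage the right-hand side into the claimed bound $-\tfrac{w\alpha_k}{2}\norm{P_k^\top \nabla f_k}^2 + C\alpha_k h_k^2$, so I need to handle the cross term and the quadratic term, exploiting that $g_k$ is close to $P_k P_k^\top \nabla f_k$ by Lemma~\ref{lem} and that $P_k^\top P_k = (d/\ell)\I$ almost surely by \ref{A_Pas}.

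First I would control the norm of $g_k$. Using \ref{A_Pas}, $\norm{P_k v}^2 = \langle P_k^\top P_k v, v\rangle = (d/\ell)\norm{v}^2$ for any $v\in\R^\ell$, so $\norm{g_k}^2 = (d/\ell)\norm{\nabla_k f_k}^2$. Then by Lemma~\ref{lem}, $\norm{\nabla_k f_k} \le \norm{P_k^\top \nabla f_k} + \tfrac{\lambda d h_k}{2\sqrt{\ell}}$, and similarly $\langle \nabla f_k, g_k\rangle = \langle P_k^\top \nabla f_k, \nabla_k f_k\rangle$, which I would split as $\norm{P_k^\top \nabla f_k}^2 + \langle P_k^\top\nabla f_k, \nabla_k f_k - P_k^\top\nabla f_k\rangle \ge \norm{P_k^\top\nabla f_k}^2 - \norm{P_k^\top\nabla f_k}\cdot \tfrac{\lambda d h_k}{2\sqrt\ell}$ by Cauchy-Schwarz and Lemma~\ref{lem}. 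Abbreviating $a := \norm{P_k^\top\nabla f_k}$ and $b := \tfrac{\lambda d h_k}{2\sqrt\ell}$, the descent inequality becomes (after substituting $\Lambda = \lambda d/\ell$ so that $\tfrac{\lambda\alpha_k^2}{2}\cdot\tfrac{d}{\ell} = \tfrac{\Lambda\alpha_k^2}{2}$) something of the form $f_{k+1}-f_k \le -\alpha_k a^2 + \alpha_k a b + \tfrac{\Lambda\alpha_k}{2}(a+b)^2$, up to collecting an $\alpha_k$ factor.

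The main work — and the step I expect to be the real obstacle — is the elementary but delicate algebra of completing the square / applying Young's inequality to show that $-a^2 + ab + \tfrac{\Lambda\alpha_k}{2}(a+b)^2 \le -\tfrac{w}{2}a^2 + \tfrac{C h_k^2}{\alpha_k}\cdot\tfrac{1}{\alpha_k}\cdots$, i.e. that the coefficient of $a^2$ can be pushed down to $-w/2$ while all the cross terms and the $b^2$ term get absorbed into a clean $C\alpha_k h_k^2$ term. Expanding, the coefficient of $a^2$ is $-(1 - \tfrac{\Lambda\alpha_k}{2})$; the condition $\Lambda\alpha_k \le \Lambda\aup < 2-w$ guarantees $1 - \tfrac{\Lambda\alpha_k}{2} > w/2$, so there is slack $2 - \Lambda\aup - w > 0$ — this is exactly why that quantity appears in the definition of $C$. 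The cross terms $ab$ and $\Lambda\alpha_k ab$ must then be split by Young's inequality $ab \le \epsilon a^2 + \tfrac{1}{4\epsilon}b^2$ with $\epsilon$ chosen to eat precisely the available slack (hence the $\min(1, 2-\Lambda\aup-w)$ and the factor $8$ in $C$), leaving a multiple of $b^2 = \tfrac{\lambda^2 d^2 h_k^2}{4\ell} = \tfrac{\ell\Lambda^2 h_k^2}{4}$, which reassembles into $C\alpha_k h_k^2$ after tracking the constants. I would verify the constant-chasing matches $C = \ell\Lambda^2/(8\min(1, 2-\Lambda\aup-w))$ exactly. The final sentence, $f_{k+1}-f_k \le C\alpha_k h_k^2$ a.s., then follows immediately by dropping the nonpositive term $-\tfrac{w\alpha_k}{2}\norm{P_k^\top\nabla f_k}^2$.
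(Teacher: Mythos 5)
Your overall architecture --- Descent Lemma, the a.s.\ bound $\norm{e_k}\le \lambda d h_k/(2\sqrt{\ell})$ from Lemma~\ref{lem}, assumption \ref{A_Pas} to convert $\norm{P_k v}^2$ into $(d/\ell)\norm{v}^2$, and a Young-type absorption of the cross terms --- is exactly the paper's. But the specific decomposition you propose is too lossy to deliver the stated constant $C$, and the step you flag as ``verify the constant-chasing matches'' is precisely where it fails. Writing $a=\norm{P_k^\top\nabla f_k}$, $b=\lambda d h_k/(2\sqrt{\ell})$ and $e_k=\nabla_k f_k-P_k^\top\nabla f_k$, the inner product $\langle P_k^\top\nabla f_k, e_k\rangle$ appears twice: once with coefficient $-\alpha_k$ from the linear term of the Descent Lemma, and once with coefficient $+\Lambda\alpha_k^2$ inside $\tfrac{\lambda\alpha_k^2}{2}\norm{P_k\nabla_k f_k}^2$. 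By bounding the two occurrences in absolute value separately (Cauchy--Schwarz on the first, $\norm{\nabla_k f_k}\le a+b$ on the second) you arrive at a cross term $(1+\Lambda\alpha_k)\,\alpha_k ab$, whereas keeping the exact expansion the two occurrences combine into $\alpha_k(\Lambda\alpha_k-1)\langle P_k^\top\nabla f_k,e_k\rangle$, whose coefficient $\abs{\Lambda\alpha_k-1}\le\max(1,\Lambda\alpha_k)$ is much smaller and vanishes at $\alpha_k=1/\Lambda$. That cancellation is what makes the stated $C$ achievable. Concretely: with your bound, forcing the $a^2$-coefficient down to $-\w\alpha_k/2$ leaves Young slack $(2-\Lambda\alpha_k-\w)/2$ for a cross term of size $(1+\Lambda\alpha_k)ab$, which (optimizing the Young parameter) yields an error coefficient $\tfrac{\ell\Lambda^2}{8}\bigl[\tfrac{(1+\Lambda\alpha_k)^2}{2-\Lambda\alpha_k-\w}+\Lambda\alpha_k\bigr]$; requiring this to be at most $C=\tfrac{\ell\Lambda^2}{8\min(1,\,2-\Lambda\aup-\w)}$ at $\alpha_k=\aup$ in the regime $2-\Lambda\aup-\w\le 1$ reduces, after clearing denominators, to $4\Lambda\aup\le \w\Lambda\aup$, i.e.\ $\w\ge 4$, contradicting $\w\le 1$. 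So your route proves the inequality only with a strictly larger constant (worse by a factor up to $(1+\Lambda\aup)^2$); since this $C$ is propagated into the error floor of the PL analysis, the proposition as stated is not established.

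The fix is the paper's: expand $\norm{P_k\nabla_k f_k}^2$ exactly around $P_kP_k^\top\nabla f_k$ and substitute $\nabla_k f_k=e_k+P_k^\top\nabla f_k$ in the linear term as well, obtaining
$f_{k+1}-f_k\le \alpha_k(\Lambda\alpha_k-1)\langle P_k^\top\nabla f_k,e_k\rangle+\tfrac{\Lambda\alpha_k^2}{2}\norm{e_k}^2+\alpha_k\bigl(\tfrac{\Lambda\alpha_k}{2}-1\bigr)a^2$
\emph{before} taking any absolute values; then apply Young's inequality with parameter $\tau_k=1$ when $\alpha_k\le 1/\Lambda$ (where the error term is exactly $\tfrac{\ell\Lambda^2}{8}\alpha_k h_k^2$ and none of the $\w$-slack is spent) and with $\tau_k=(\Lambda\alpha_k-1)/(2-\Lambda\alpha_k-\w)$ when $1/\Lambda<\alpha_k\le\aup$, in which case the error coefficient collapses to $\tfrac{1-\w\Lambda\alpha_k}{2-\Lambda\alpha_k-\w}\cdot\tfrac{\ell\Lambda^2}{8}\le C$. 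Your final observation --- dropping the nonpositive term to get $f_{k+1}-f_k\le C\alpha_k h_k^2$ a.s.\ --- is fine once the main inequality is in hand.
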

 
Before showing the proof of Proposition \ref{prop:as}, we make some comments on the assumptions and conclusion and provide an important consequence under additional assumptions. First note that Proposition\ref{prop:as} requires neither \ref{A_PE} nor \ref{H_existence}-\ref{H_PL}. 
The difference $f_{k+1}-f_k$ is bounded a.s. by the sum of two terms: a negative term depending on the squared norm of the projected gradient $P_k^\top\nabla f_k$ and an error term proportional to $\alpha_k h_k^2$. 
\begin{remark}\label{rem:h0}
	Notice that the term mutiplying $\| P_k^\top\nabla f_k \|^2$ does not depend on $h_k$. A result analogous to Proposition~\ref{prop:as} can be obtained for recursion \eqref{eqn: gradient-free}, corresponding to the limit $h\to 0$. In this case, for every $k\in\N$,
\[
f_{k+1}-f_k \leq -\frac{\w\alpha_k}{2}  \|P_k^\top\nabla f_k\|^2   \ \ \ \ \ \ \ a.s.
\]
In particular, the objective function values are a.s. decreasing; namely, $f_{k+1}\leq f_k$ a.s. This fact is used later, in the proof of Theorem \ref{th:FejerSStrong}.
\end{remark}

Finally, adding condition \ref{A_stepbb} to the hypothesis of Proposition~\ref{prop:as}, we get the following corollary.
\begin{corollary}\label{cor:as}
	Assume \ref{H_lip}, \ref{A_Pas}, \ref{A_stepwith2} and \ref{A_stepbb}. Let $\seq{x_k}$ be a random sequence generated by Algorithm~\ref{eqn: derivative-free}. Let $0<\w<2-\Lambda \aup$ \  and $ \w\leq 1$. Then, for every $k\in\mathbb{N}$,
	\begin{equation}\label{estt}
	\begin{split}
	f_{k+1}-f_k & \leq  -\frac{ \w \alow}{2} \| P_k^\top\nabla f_k \|^2  + C \aup \hup^2  \ \ \ \ \ \ \ a.s.
	\end{split}
	\end{equation}
\end{corollary}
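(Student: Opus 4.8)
The plan is to derive Corollary~\ref{cor:as} directly from Proposition~\ref{prop:as} by using the additional two-sided control on the stepsize and the upper bound on the discretization that assumption~\ref{A_stepbb} provides. Since all the hypotheses of Proposition~\ref{prop:as} (namely \ref{H_lip}, \ref{A_Pas}, \ref{A_stepwith2}, and $0<\w<2-\Lambda\aup$ with $\w\leq 1$) are in force, the conclusion
$$
f_{k+1}-f_k \;\leq\; -\frac{\w\alpha_k}{2}\,\norm{P_k^\top\nabla f_k}{}^2 + C\,\alpha_k h_k^2 \qquad a.s.
$$
holds for every $k\in\N$, where $C$ is the constant defined in the statement of the Proposition. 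The whole argument is then just a matter of bounding the two terms on the right-hand side separately, observing that both are monotone in $\alpha_k$ (and the second in $h_k$) in the appropriate direction.

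First I would handle the negative term: since $-\norm{P_k^\top\nabla f_k}{}^2\leq 0$ and, by \ref{A_stepbb}, $\alpha_k\geq\alow>0$, multiplying the nonpositive quantity $-\tfrac{\w}{2}\norm{P_k^\top\nabla f_k}{}^2$ by the larger factor $\alpha_k$ can only decrease it; hence $-\tfrac{\w\alpha_k}{2}\norm{P_k^\top\nabla f_k}{}^2 \leq -\tfrac{\w\alow}{2}\norm{P_k^\top\nabla f_k}{}^2$ almost surely. Next I would handle the error term: by \ref{A_stepwith2} we have $\alpha_k\leq\aup$ and by \ref{A_stepbb} (or \ref{A_stepnotinell1}) we have $h_k\leq\hup$, so $C\alpha_k h_k^2\leq C\aup\hup^2$ since $C\geq 0$. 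Combining the two bounds gives exactly \eqref{estt}, which completes the proof.

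There is essentially no obstacle here — the statement is a routine specialization of the Proposition, and the only thing to be mildly careful about is the direction of the monotonicity in $\alpha_k$: the coefficient of $\norm{P_k^\top\nabla f_k}{}^2$ is bounded \emph{below} in absolute value using the lower bound $\alow$ on the stepsize, while the error coefficient is bounded \emph{above} using the upper bound $\aup$. One should also note in passing that the inequality $\alow \leq \aup$ is consistent (both exist by \ref{A_stepbb} and \ref{A_stepwith2}), and that $\hup\geq 0$ is allowed, so the statement degenerates correctly to the noiseless bound of Remark~\ref{rem:h0} when $\hup=0$. I would present the proof in two or three lines, citing Proposition~\ref{prop:as} and then applying the bounds $\alow\leq\alpha_k\leq\aup$ and $h_k\leq\hup$ termwise.
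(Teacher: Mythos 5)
Your proposal is correct and is exactly the argument the paper intends: the corollary is stated as an immediate consequence of Proposition~\ref{prop:as} once \ref{A_stepbb} supplies the bounds $\alpha_k\geq\alow$, $\alpha_k\leq\aup$ and $h_k\leq\hup$, and your termwise monotonicity estimates (including the correct direction for the nonpositive gradient term) are precisely what is needed. Nothing is missing.
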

\begin{proof}[Proof of Proposition \ref{prop:as}]
	For notational simplicity, we denote $P_k$ by $P$ and set $e_k:= \nabla_k f_k-P_k^\top\nabla f_k$. By Assumption \ref{A_Pas}, for every $v\in\R^{\ell}$,
\begin{equation}\label{eqfff}
\norm{Pv}{}^2\eas\frac{d}{\ell}\norm{v}{}^2.
\end{equation}
By Hypothesis \ref{H_lip} combined with Descent Lemma \ref{DescentLemm} and Algorithm~\ref{eqn: derivative-free}, we obtain
\begin{equation}\label{desc}
\begin{split}
f_{k+1}-f_k & \leq \langle \nabla f_k, x_{k+1}-x_k\rangle + \frac{\lambda}{2}\| x_{k+1}-x_k\|^2\\
&= -\alpha_k \langle P^\top\nabla f_k, \nabla_k f_k\rangle + \frac{\alpha_k^2\lambda}{2}\| P\nabla_k f_k\|^2.
\end{split}
\end{equation}
For the last term,  we add and subtract $PP^{\top}\nabla f_k$ and recall the definition of $e_k$, to get
\begin{equation*}
\begin{split}
\frac{\alpha_k^2\lambda}{2}\| P\nabla_k f_k\|^2
& = \frac{\alpha_k^2\lambda}{2}\| P\nabla_k f_k-PP^\top\nabla f_k+PP^\top\nabla f_k\|^2\\
& = \frac{\alpha_k^2\lambda}{2}\| Pe_k \|^2+ \frac{\alpha_k^2\lambda}{2}\| PP^\top\nabla f_k\|^2+\alpha_k^2\lambda \langle Pe_k, PP^\top\nabla f_k\rangle.
\end{split}
\end{equation*}
Now, using again the algorithm and relation \eqref{eqfff} in inequality \eqref{desc}, we have
\begin{eqnarray*}
\begin{split}
f_{k+1}-f_k  
&\leq
 -\alpha_k \langle P^\top\nabla f_k, \nabla_k f_k\rangle  \\&
 + \frac{\alpha_k^2\lambda d}{2\ell}\| e_k\|^2+ \frac{\alpha_k^2\lambda d}{2 \ell}\| P^\top\nabla f_k\|^2+\frac{\alpha_k^2\lambda d}{\ell} \langle \nabla_k f_k-P^\top\nabla f_k, P^\top\nabla f_k\rangle \\
& = \alpha_k \left( \Lambda\alpha_k-1 \right) \langle P^\top\nabla f_k,  \nabla_k f_k\rangle  + \frac{\Lambda\alpha_k^2}{2}\| e_k\|^2 - \frac{\Lambda\alpha_k^2}{2}\| P^\top\nabla f_k\|^2,
\end{split}
\end{eqnarray*}
where we recall that $\Lambda$ is defined as $\Lambda:=\lambda d/\ell$. Write $\nabla_k f_k=e_k+P^\top \nabla f_k$, to get
\begin{equation*}
\begin{split}
f_{k+1}-f_k  & \leq \alpha_k \left(\Lambda\alpha_k -1 \right) \langle P^\top\nabla f_k,  e_k+P^\top \nabla f_k \rangle \\
& + \frac{\Lambda \alpha_k^2}{2}\| e_k\|^2 - \frac{\Lambda\alpha_k^2}{2}\| P^\top\nabla f_k\|^2 \\
& = \alpha_k \left( \Lambda\alpha_k-1 \right) \langle P^\top\nabla f_k,  e_k \rangle  + \frac{\Lambda \alpha_k^2}{2}\| e_k\|^2+ \alpha_k \left( \frac{\Lambda\alpha_k}{2} -1 \right) \| P^\top\nabla f_k\|^2.
\end{split}
\end{equation*}
By Young's inequality with parameter $\tau_k$ and the estimate of $\norm{e_k}{}$ from Lemma \ref{lem}, 
\begin{equation*}
\begin{split}
f_{k+1}-f_k & \leq \frac{\alpha_k}{2\tau_k} \abs{\Lambda\alpha_k-1} \| P^\top\nabla f_k \|^2
+\frac{\alpha_k\tau_k}{2} \abs{\Lambda\alpha_k-1} \| e_k\|^2+ \frac{\Lambda\alpha_k^2}{2}\| e_k\|^2\\
&+ \alpha_k \left( \frac{\Lambda\alpha_k}{2} -1 \right) \| P^\top\nabla f_k\|^2 \\
& \leq -\alpha_k \left(1-\frac{\Lambda\alpha_k}{2}-\frac{1}{2\tau_k} \abs{\Lambda\alpha_k-1}\right) \| P^\top\nabla f_k \|^2\\
&+\frac{\alpha_k}{2}\left(\tau_k\abs{\Lambda\alpha_k-1} +\Lambda\alpha_k\right) \frac{\lambda^2 h_k^2 d^2}{4\ell}.
\end{split}
\end{equation*}
If $\alpha_k \leq 1/\Lambda$, choosing $\tau_k=1$, we obtain
\begin{equation}\label{uno}
\begin{split}
\alpha_k \left(1-\frac{\Lambda\alpha_k}{2}- \frac{1}{2\tau_k} \abs{\Lambda\alpha_k-1} \right) 
& =\frac{\alpha_k}{2}\geq \frac{\w\alpha_k}{2}
\end{split}
\end{equation}
and
\begin{equation}\label{due}
\begin{split}
\left(\omega_k\abs{\Lambda\alpha_k-1} + \Lambda\alpha_k\right) \frac{\alpha_k \lambda^2 h_k^2 d^2}{8\ell}
& = \frac{\ell \Lambda^2}{8} \alpha_k h_k^2 \leq C \alpha_k h_k^2.
\end{split}
\end{equation}
Similarly, if $1/\Lambda < \alpha_k \leq \aup < 2/\Lambda$, choose $\tau_k=\frac{\Lambda\alpha_k-1}{2-\Lambda\alpha_k-\w}$. Notice that $\tau_k>0$ because $\Lambda \alpha_k-1 >0$ and
$$0<\w<2-\Lambda\aup\leq2-\Lambda\alpha_k.$$
Then,
\begin{equation*}
\begin{split}
\alpha_k \left(1-\frac{\Lambda\alpha_k}{2}- \frac{1}{2\tau_k} \abs{\Lambda \alpha_k-1} \right) 
& =\frac{\w\alpha_k}{2},
\end{split}
\end{equation*}
and, since $1-\w\Lambda\alpha_k\leq 1$,
\begin{equation*}
\begin{split}
\left(\tau_k\abs{\Lambda \alpha_k-1} + \Lambda \alpha_k\right) \frac{\alpha_k\lambda^2 h_k^2 d^2}{8\ell}
& = \left(\frac{1-\w\Lambda\alpha_k}{2-\Lambda\alpha_k-\w}\right) \frac{\ell \Lambda^2}{8} \alpha_k h_k^2 \leq C \alpha_k h_k^2.
\end{split}
\end{equation*}	

\end{proof}
\begin{remark}\label{rem:simple}
	If the stepsize satisfies $\alpha_k\leq 1/\Lambda$ for every $k\in\N$ (instead of the weaker condition $\alpha_k < 2/\Lambda$ from Assumption \ref{A_stepwith2}), it is clear from the proof that the result in Proposition \ref{prop:as} reduces to
	\begin{equation*}
	\begin{split}
	f_{k+1}-f_k & \leq  -\frac{\alpha_k}{2} \| P_k^\top\nabla f_k \|^2  + \frac{\ell \Lambda^2}{8} \alpha_k h_k^2  \ \ \ \ \ \ \ a.s.
	\end{split}
	\end{equation*}
	An analogous result holds for Corollary \ref{cor:as}.
\end{remark} 

\section{Convex case}\label{Sect: Convex case}

In this section we study the  case of convex objective functions.
First, in Lemma \ref{lem:Fejer} we derive a key preliminary result 
establishing a stochastic Fejér monotonicity property for our method.
Using this Lemma we derive  convergence results  for different parameter choices. In Section~\ref{sebsec:inexact} we consider the case of a non-vanishing discretization. Then,  in Section~\ref{sec:basic}, and~\ref{sec:improv} we consider two different regimes where the discretization is made finer as the iteration proceeds, considering both a more  and a less aggressive strategy. Finally, in  Section~\ref{sec:david1} we consider the limit case where no discretization is considered, namely recursion~\ref{eqn: gradient-free}. 
Throughout this section we use the filtration $\mathcal{F}_k = \sigma(P_1, \ldots, P_{k-1})$.

\subsection{Fejér monotonicity}
We derive an a.s.  energy estimate for the Lyapunov sequence $\seq{\norm{x_k-x_*}^2}$, where $\seq{x_k}$ is the random sequence generated by Algorithm~\ref{eqn: derivative-free} and $x_*$ is any minimizer of  $f$.  


\begin{lemma}\label{lem:Fejer}
	Assume \ref{H_lip}, \ref{H_existence} and \ref{A_Pas}, \ref{A_PE}. Let $\seq{x_k}$ be a random sequence generated by Algorithm~\ref{eqn: derivative-free}. Then, for every $k\in\N$ and $x_*\in\argmin f$,
	\begin{equation*}
\begin{split}
\mathbb{E}\left[\norm{x_{k+1}-x_*}{}^2 \ \big{|}  \mathcal{F}_k\right] - \left(1+\frac{\alpha_kh_k d^2}{2\ell\sqrt{\ell}}\right) \norm{x_{k}-x_*}{}^2 &\\ 
\leqas \quad D_k+\frac{2d}{\ell}\alpha_k^2 \norm{\nabla f_k}{}^2 + 2\alpha_k \langle \nabla f_k, x_*-x_{k} \rangle,
\end{split}
\end{equation*}
	where $\Lambda:=\lambda d/\ell$ and 
	\begin{equation}\label{def:d}
	\begin{split}
	D_k: & = \frac{\Lambda^2}{2}\left[ d\left(\alpha_k h_k\right)^2+ \sqrt{\ell} \left(\alpha_k h_k\right)\right].
	\end{split}
	\end{equation}
\end{lemma}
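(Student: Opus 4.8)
The plan is to expand $\norm{x_{k+1}-x_*}^2$ using the recursion $x_{k+1}=x_k-\alpha_k P_k\nabla_k f_k$, then take conditional expectation with respect to $\mathcal{F}_k$ and use the properties \ref{A_Pas}, \ref{A_PE} of $P_k$ together with Lemma \ref{lem} to control the error term $e_k:=\nabla_k f_k-P_k^\top\nabla f_k$. Concretely, write
\begin{equation*}
\norm{x_{k+1}-x_*}^2 = \norm{x_k-x_*}^2 - 2\alpha_k\langle P_k\nabla_k f_k, x_k-x_*\rangle + \alpha_k^2\norm{P_k\nabla_k f_k}^2.
\end{equation*}
For the quadratic term, \ref{A_Pas} gives $\norm{P_k\nabla_k f_k}^2 \eas (d/\ell)\norm{\nabla_k f_k}^2$ a.s., and then $\norm{\nabla_k f_k}^2 \leq 2\norm{P_k^\top\nabla f_k}^2 + 2\norm{e_k}^2 \leq 2\norm{P_k^\top\nabla f_k}^2 + 2\cdot(\lambda d h_k/2\sqrt{\ell})^2$ by Lemma \ref{lem}; again by \ref{A_Pas}, $\norm{P_k^\top\nabla f_k}^2 = \langle P_kP_k^\top\nabla f_k,\nabla f_k\rangle$, whose conditional expectation is $\norm{\nabla f_k}^2$ by \ref{A_PE} since $x_k$ is $\mathcal{F}_k$-measurable. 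This yields the $\tfrac{2d}{\ell}\alpha_k^2\norm{\nabla f_k}^2$ term plus a contribution of order $(\alpha_k h_k)^2$ to $D_k$.

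For the linear (cross) term, split $P_k\nabla_k f_k = P_kP_k^\top\nabla f_k + P_k e_k$. Taking conditional expectation, $\mathbb{E}[\langle P_kP_k^\top\nabla f_k, x_k-x_*\rangle\mid\mathcal{F}_k] = \langle \nabla f_k, x_k-x_*\rangle$ by \ref{A_PE}, which produces the desired $2\alpha_k\langle\nabla f_k, x_*-x_k\rangle$. The remaining term $-2\alpha_k\mathbb{E}[\langle P_k e_k, x_k-x_*\rangle\mid\mathcal{F}_k]$ is the one that must be absorbed into $D_k$ and the coefficient of $\norm{x_k-x_*}^2$: bound $|\langle P_k e_k, x_k-x_*\rangle| \leq \norm{P_k e_k}\,\norm{x_k-x_*}$, use \ref{A_Pas} to get $\norm{P_k e_k}\eas\sqrt{d/\ell}\,\norm{e_k}\leq \sqrt{d/\ell}\cdot \lambda d h_k/(2\sqrt{\ell}) = \Lambda\sqrt{d}\,h_k/(2\sqrt{\ell})$ a.s., and then apply Young's inequality $\norm{P_k e_k}\norm{x_k-x_*}\leq \tfrac{1}{2}\big(c\norm{x_k-x_*}^2 + c^{-1}\norm{P_k e_k}^2\big)$ with a judiciously chosen $c$ (matching $c = h_k d^{3/2}/(4\ell^{3/2})\cdot$ constant, so that $\alpha_k c$ gives the stated coefficient $\alpha_k h_k d^2/(2\ell\sqrt{\ell})$) to split off the multiplicative $\norm{x_k-x_*}^2$ factor on the left-hand side and a term of order $\alpha_k h_k$ (the $\sqrt{\ell}\,(\alpha_k h_k)$ piece of $D_k$) on the right.

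The main obstacle is purely bookkeeping: tracking the exact constants so that all the $h_k$-dependent error terms collapse precisely into the stated $D_k$ in \eqref{def:d} and the stated coefficient $1+\alpha_k h_k d^2/(2\ell\sqrt{\ell})$, in particular choosing the Young parameter $c$ in the cross-term estimate correctly and verifying that the $(\alpha_k h_k)^2$ contributions from both the quadratic term and the cross term combine into the single $\tfrac{\Lambda^2}{2}d(\alpha_k h_k)^2$ coefficient. No deep idea is needed beyond the decomposition $\nabla_k f_k = P_k^\top\nabla f_k + e_k$, the a.s. identities from \ref{A_Pas}, the expectation identity $\mathbb{E}[P_kP_k^\top\mid\mathcal{F}_k]=\mathbb{I}$ from \ref{A_PE} (valid because $P_k$ is independent of $\mathcal{F}_k$), and the deterministic a.s. bound on $\norm{e_k}$ from Lemma \ref{lem}; convexity of $f$ is not used here and will only enter when this Fejér inequality is combined with $\langle\nabla f_k, x_*-x_k\rangle \leq f_* - f_k$ in later results.
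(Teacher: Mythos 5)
Your proposal follows essentially the same route as the paper's proof: expand the square via the recursion, decompose $\nabla_k f_k = P_k^\top\nabla f_k + e_k$, use the a.s.\ identity $\norm{P_k v}^2 = (d/\ell)\norm{v}^2$ from \ref{A_Pas} together with Lemma \ref{lem} to control $e_k$, absorb the cross term $\norm{P_k e_k}\norm{x_k-x_*}$ by Young's inequality with an $h_k$-dependent parameter, and finish with \ref{A_PE} under the conditional expectation (the paper merely adds and subtracts $\nabla f_k$ before taking expectations rather than after, which is the same thing). One small arithmetic slip to fix in the bookkeeping you already flagged: $\sqrt{d/\ell}\cdot\lambda d h_k/(2\sqrt{\ell}) = \lambda d^{3/2}h_k/(2\ell)$, not $\Lambda\sqrt{d}\,h_k/(2\sqrt{\ell})$; the paper then upper-bounds this by $\lambda d^2 h_k/(2\ell\sqrt{\ell})$ using $\ell\le d$ to reach the stated constants.
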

Lemma \ref{lem:Fejer} is the fundamental inequality used in the remainder of this section. We combine it with another important ingredient, that is the cocoercivity of $\nabla f$ (see the Baillon-Haddad Theorem, Lemma \ref{BaillHadd}) which allows us to show that the sequence $\seq{\norm{x_k-x_*}^2}$ is stochastically Fejér monotone up to the error generated at each iteration by $D_k$. Under different assumptions on the stepsize and discretization sequences the sequence $(x_k)$ exhibits different behaviors in terms of optimization and convergence rates.
\begin{proof}
	Recall the notation $\nabla_k f_k:= \nabla_{\left(P_k,h_k\right)} f(x_k)$. \text{Lemma} \ \ref{lem} provides an estimate for $\norm{e_k}$, where $e_k:=\nabla_k f_k- P_k \nabla f(x_k)$. By Assumption \ref{A_Pas}, for every $v\in\R^{\ell}$,
	\begin{equation}\label{conseq:A_Pas}
	\norm{P_k v}^2\overset{\textrm{a.s.}}{=}\frac{d}{\ell}\norm{v}^2;
	\end{equation}
	and, by \ref{A_PE}, for every fixed $w\in\R^{d}$,
	\begin{equation}\label{conseq:A_PE}
	\mathbb{E}\left[\norm{P_k^\top w}^2 \ \big| \mathcal{F}_k\right]=\norm{w}^2.\end{equation}
	Thus, for every $x_*\in\argmin f$ and every $k\in\N$, the following equality holds a.s.
		\begin{equation*}
	\begin{split}
	& \norm{x_{k+1}-x_*}{}^2 -\norm{x_{k}-x_*}{}^2\\ \quad = \quad & \norm{x_{k+1}-x_k}{}^2 + 2 \langle x_{k+1}-x_k, x_{k}-x_* \rangle\\
	(\text{Alg.~} \ref{eqn: derivative-free}) \quad = \quad  & \alpha_k^2 \norm{P_k \nabla_k f_k}{}^2  - 2\alpha_k \langle P_k \nabla_k f_k, x_{k}-x_* \rangle\\
	= \quad & \alpha_k^2 \norm{P_k \nabla_k f_k-P_k P_k^\top \nabla f_k+P_k P_k^\top \nabla f_k}{}^2 + 2\alpha_k \langle \nabla f_k, x_*-x_{k} \rangle \\
	+ & 2\alpha_k \langle P_k P_k^\top \nabla f_k -\nabla f_k, x_*-x_{k} \rangle \\
	+ & 2\alpha_k \langle P_k \nabla_k f_k- P_k P_k^\top \nabla f_k , x_*-x_{k} \rangle.
	\end{split}
	\end{equation*}
Using the fact that $(x+y)^2\leq 2x^2 + 2y^2$ and the Cauchy-Schwarz inequality, we get
\begin{equation*}
	 \begin{split}
	 & \norm{x_{k+1}-x_*}{}^2 -\norm{x_{k}-x_*}{}^2\\
	 \leq \quad  & 2\alpha_k^2 \norm{P_k e_k}{}^2 + 2\alpha_k^2 \norm{P_k P_k^\top \nabla f_k}{}^2 + 2\alpha_k \langle \nabla f_k, x_*-x_{k} \rangle \\
	+ & 2\alpha_k \langle P_k P_k^\top \nabla f_k -\nabla f_k, x_*-x_{k} \rangle + 2\alpha_k \norm{P_k e_k}{} \norm{x_*-x_{k}}{}.
	\end{split}
	\end{equation*}
	Then, by \ref{A_Pas} and Lemma \ref{lem}, 
	\begin{equation}\label{ineqforerror}
	\begin{split}    
    & \norm{x_{k+1}-x_*}{}^2 -\norm{x_{k}-x_*}{}^2\\
    \leq \quad & \frac{2\alpha_k^2d\left(\lambda h_k d\right)^2}{4\ell^2} + \frac{2\alpha_k^2d}{\ell} \norm{P_k^\top \nabla f_k}{}^2 + 2\alpha_k \langle \nabla f_k, x_*-x_{k} \rangle \\
	+ & 2\alpha_k \langle P_k P_k^\top \nabla f_k -\nabla f_k, x_*-x_{k} \rangle + \frac{\alpha_k \lambda h_k d^2}{\ell\sqrt{\ell}}  \norm{x_*-x_{k}}{}.
	\end{split}
	\end{equation}
	Finally,
	\begin{equation*}
	\begin{split}
	& \norm{x_{k+1}-x_*}{}^2 -\norm{x_{k}-x_*}{}^2 \\ \leq \quad & \frac{d^3 \lambda^2  }{2\ell^2}\left(\alpha_k h_k\right)^2 + \frac{2d}{\ell}\alpha_k^2 \norm{P_k^\top \nabla f_k}{}^2 + 2\alpha_k \langle \nabla f_k, x_*-x_{k} \rangle \\
	+ & 2\alpha_k \langle P_k P_k^\top \nabla f_k -\nabla f_k, x_*-x_{k} \rangle + \frac{ \lambda^2 d^2\left(\alpha_k h_k\right)^2}{2 \ell\sqrt{\ell}\alpha_k h_k} + \frac{\alpha_k h_k d^2}{2\ell\sqrt{\ell}} \norm{x_*-x_{k}}{}^2,
	\end{split}
	\end{equation*}
	where we used Young inequality with parameter $(\alpha_k h_kd^2)/(\ell\sqrt{\ell})$.
	Taking the conditional expectation given $\mathcal{F}_k$ and using Assumption \ref{A_PE}, we get the claim. For integrability considerations, see  Remark \ref{lemma: integrabilityI}. 
\end{proof}

Given the above inequality, we next derive a number of different convergence results considering different choices of the discretization parameter and the stepsize.

\subsection{Non-vanishing dicretization
}\label{sebsec:inexact}

Using an intermediate estimate from the proof of Lemma \ref{lem:Fejer}, we get the following result: with only a boundedness assumption on the discretization sequences, we derive an upper bound on the expectation of the function values for the best iterate.
\begin{theorem}\label{th:convex_inexact}
	Assume \ref{H_lip}, \ref{H_existence}, \ref{A_Pas}, \ref{A_PE} and \ref{A_stepwith1}. Let $\seq{x_k}$ be a random sequence generated by Algorithm~\ref{eqn: derivative-free}. Then, for every $k\in\N$,
	\begin{equation*}
	\begin{split}
	\min_{j\in[k]}\mathbb{E}(f_j-f_*)&\leq \frac{\tilde{D}_k}{\sum_{j=0}^{k}\alpha_j}, 
	\end{split}
	\end{equation*}
	where
	\begin{equation*}
	\begin{split}
	    	\tilde{D}_k:&=\frac{d+\xi \ell}{2\xi\ell}\left[S_k +  \sum_{j=0}^{k}  \rho_j \left(\sqrt{S_{j-1}}+\sum_{i=0}^{j} \rho_i\right) \right],
	\end{split}
	\end{equation*}
	with $\xi:=\frac{1}{\lambda}-\frac{d\aup}{\ell}>0$, $\rho_i:=\frac{ \lambda d^2}{\ell\sqrt{\ell}} \left(\alpha_i h_i\right) $ and $S_j:=\norm{x_0-x_*}{}^2+\sum_{i=0}^{j}\frac{\Lambda^2}{2} \left(\alpha_i h_i\right)^2$. In particular, for $\alpha_k=\alpha$ and $h_k\leq \hup$, we get
	\begin{equation*}
	\begin{split}
\min_{j\in[k]}\mathbb{E}(f_j-f_*)&\leq \max\{C_1/(k+1)+C_2\hup^2+C_3\hup, \ C_2 \hup^2+C_4\hup\},
	\end{split}
	\end{equation*}
	where we made explicit only the dependence on the iteration number $k$ and the bound on the discretization error $\hup$, while $C_1, C_2$ and $C_3$ are appropriate constants derived from the proof.
\end{theorem}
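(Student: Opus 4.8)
\emph{From the Fej\'er computation to a scalar recursion.} The plan is to start not from the statement of Lemma~\ref{lem:Fejer} but from the intermediate bound \eqref{ineqforerror} obtained along its proof, i.e.\ the estimate for $\|x_{k+1}-x_*\|^2-\|x_k-x_*\|^2$ written before the cross term $2\alpha_k\langle P_kP_k^\top\nabla f_k-\nabla f_k,x_*-x_k\rangle$ has been averaged. Conditioning on $\mathcal F_k$ and using \ref{A_PE} (so that $\mathbb E[\|P_k^\top\nabla f_k\|^2\mid\mathcal F_k]=\|\nabla f_k\|^2$ and the cross term has zero mean) gives
\[
\mathbb E\big[\|x_{k+1}-x_*\|^2\mid\mathcal F_k\big]\le\|x_k-x_*\|^2+\tfrac{\Lambda^2d}{2}(\alpha_kh_k)^2+\tfrac{2d}{\ell}\alpha_k^2\|\nabla f_k\|^2+2\alpha_k\langle\nabla f_k,x_*-x_k\rangle+\rho_k\|x_*-x_k\|.
\]
Next I would invoke cocoercivity of $\nabla f$ (Baillon--Haddad, Lemma~\ref{BaillHadd}): since $\nabla f(x_*)=0$ we have $\|\nabla f_k\|^2\le\lambda\langle\nabla f_k,x_k-x_*\rangle$, and by convexity $\langle\nabla f_k,x_k-x_*\rangle\ge f_k-f_*\ge0$. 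As $\alpha_k\le\aup<1/\Lambda$ (Assumption~\ref{A_stepwith1}), equivalently $\xi:=\tfrac1\lambda-\tfrac{d\aup}{\ell}>0$, the quadratic and linear terms combine into $-2c\,\alpha_k(f_k-f_*)$ for an explicit constant $c>0$ carrying the $\xi$-dependence (this is the origin of the factor $\tfrac{d+\xi\ell}{2\xi\ell}$ in $\tilde D_k$). Taking total expectations, writing $\Delta_k:=\mathbb E\|x_k-x_*\|^2$, $\delta_k:=\mathbb E(f_k-f_*)$, and bounding $\mathbb E\|x_k-x_*\|\le\sqrt{\Delta_k}$ by Jensen, I arrive at the scalar recursion $\Delta_{k+1}\le\Delta_k-2c\alpha_k\delta_k+b_k+\rho_k\sqrt{\Delta_k}$ with $b_k:=\tfrac{\Lambda^2d}{2}(\alpha_kh_k)^2$ (integrability is routine, cf.\ Remark~\ref{lemma: integrabilityI}).

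\emph{The crux: an a priori bound on the iterates, then summation.} The nonstandard feature of this recursion is the $\sqrt{\Delta_k}$ term — inherited from the linear-in-$h$ bias $\rho_k\|x_*-x_k\|$ in \eqref{ineqforerror}, ultimately from the $\langle P_ke_k,\cdot\rangle$ cross term — which forces genuine control of $\Delta_k$ before any averaging argument can be closed; this is the main obstacle. I would drop the nonpositive $-2c\alpha_k\delta_k$, leaving $\Delta_{k+1}\le\Delta_k+b_k+\rho_k\sqrt{\Delta_k}$, and apply an elementary Gr\"onwall-type lemma: if $a_{k+1}\le a_k+c_k\sqrt{a_k}+b_k$ with nonnegative sequences, then $\sqrt{a_k}\le\sqrt{a_0+\sum_{i<k}b_i}+\sum_{i<k}c_i$ (a one-line induction, using that $a_0+\sum_{i<k}b_i$ is nondecreasing). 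This gives $\sqrt{\Delta_j}\le\sqrt{S_{j-1}}+\sum_{i<j}\rho_i$ with $S_j=\|x_0-x_*\|^2+\sum_{i\le j}b_i$ as in the statement. Then I sum the recursion over $j=0,\dots,k$, discard $\Delta_{k+1}\ge0$, substitute the bound on $\sqrt{\Delta_j}$, and pass from the weighted sum to the minimum via $\big(\min_{j\in[k]}\delta_j\big)\sum_j\alpha_j\le\sum_j\alpha_j\delta_j$; dividing through yields exactly $\min_{j\in[k]}\mathbb E(f_j-f_*)\le\tilde D_k/\sum_{j=0}^k\alpha_j$ with $\tilde D_k$ of the stated form.

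\emph{The constant-parameter regime.} For $\alpha_k\equiv\alpha$ and $h_k\le\hup$ one has $\sum_{j=0}^k\alpha_j=(k+1)\alpha$, $S_j\le\|x_0-x_*\|^2+(j+1)\tfrac{\Lambda^2d}{2}\alpha^2\hup^2$ (hence $\sqrt{S_{j-1}}\le\|x_0-x_*\|+O(\sqrt j\,\hup)$), and $\rho_i\le\rho:=\tfrac{\lambda d^2}{\ell\sqrt\ell}\alpha\hup$. Inserting these into $\tilde D_k$ and dividing by $(k+1)\alpha$ produces a bound of the shape $C_1/(k+1)+C_2\hup^2+C_3\hup+C_4'\,k\hup^2+C_5'\sqrt k\,\hup^2$. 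The point is that this whole chain is valid with $k$ replaced by any $n\in\{1,\dots,k\}$, since $\min_{j\in[k]}\mathbb E(f_j-f_*)\le\min_{j\in[n]}\mathbb E(f_j-f_*)$, so one is free to optimize over $n$: as a function of $n$ the right-hand side decreases and then increases, with minimum of order $C_2\hup^2+C_4\hup$ near $n\sim\hup^{-1}$ (using $\hup\le1$ to absorb the $O(\hup^{3/2})$ contributions). Hence for $k$ below this crossover the minimum over $[k]$ is attained at $n=k$, giving the branch $C_1/(k+1)+C_2\hup^2+C_3\hup$, while above it one gets $C_2\hup^2+C_4\hup$; taking the larger of the two is the asserted $\max\{\cdot,\cdot\}$ bound. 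The same mechanism — choosing $\hup\le1/K$ so that every term at $n=K$ is $O(1/K)$ — is what drives Corollary~\ref{cor:rate}.
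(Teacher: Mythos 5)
Your argument is correct and follows the same skeleton as the paper's proof: start from the intermediate estimate \eqref{ineqforerror} rather than from the statement of Lemma~\ref{lem:Fejer}, recognize that the linear-in-$h$ bias term $\rho_k\|x_*-x_k\|$ is the real obstacle, control $\mathbb{E}\norm{x_j-x_*}$ by a Bihari/Gr\"onwall-type bound of the form $\sqrt{S_{j-1}}+\sum_i\rho_i$, sum the recursion, and pass to the minimum via $\left(\min_j\delta_j\right)\sum_j\alpha_j\le\sum_j\alpha_j\delta_j$. The one genuine structural difference is that you make a single pass through the recursion: applying cocoercivity in the form $\norm{\nabla f_k}^2\le\lambda\langle\nabla f_k,x_k-x_*\rangle$ lets you absorb the quadratic term $\tfrac{2d}{\ell}\alpha_k^2\norm{\nabla f_k}^2$ into the inner product, and convexity then turns the whole combination into $-2\alpha_k(1-\Lambda\aup)(f_k-f_*)$. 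The paper instead runs the recursion twice — once with cocoercivity to obtain the a priori bound \eqref{bound1} on $\sum_j\alpha_j\mathbb{E}\norm{\nabla f_j}^2$, and once with convexity, keeping the gradient-norm term and substituting \eqref{bound1} at the end. Your route is shorter and produces a comparable (not identical) explicit constant in $\tilde D_k$, which is immaterial given that the theorem only claims ``appropriate constants''; your induction lemma for $a_{k+1}\le a_k+c_k\sqrt{a_k}+b_k$ is a valid one-step substitute for the paper's discrete Bihari Lemma~\ref{Bihari} applied to the summed inequality, and I checked that it closes. Your treatment of the constant-parameter regime — optimizing the bound over the horizon $n\le k$ using monotonicity of the running minimum, with crossover at $n\sim1/\hup$ — is exactly the paper's final argument.
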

The above result is an extended version of Theorem~\ref{Th:main1}$(i)$.  As mentioned after Theorem~\ref{Th:main1}, it suggests that the accuracy will stop improving after a given number of iterations depending on the discretization level. The proof follows. 

\begin{proof}
Start from the inequality in \ref{ineqforerror}. Recalling that we defined $\rho_j:=\frac{ \lambda d^2}{\ell\sqrt{\ell}} \left(\alpha_j h_j\right) $, for every $j\in\N$ we have that
	\begin{equation*}
	\begin{split}
	\norm{x_{j+1}-x_*}{}^2 -\norm{x_{j}-x_*}{}^2 &  \leq \frac{\Lambda^2}{2} \left(\alpha_j h_j\right)^2 + \frac{2d}{\ell} \alpha_j^2\norm{P_j^\top \nabla f_j}{}^2 + 2\alpha_j \langle \nabla f_j, x_*-x_{j} \rangle \\
	& \ \ + 2\alpha_j \langle P_j P_j^\top \nabla f_j -\nabla f_j, x_*-x_{j} \rangle+ \rho_j \norm{x_*-x_{j}}{}\\
		\left(\text{Baillon-Haddad Theorem \ref{BaillHadd}}\right) \ &\leq \frac{\Lambda^2}{2} \left(\alpha_j h_j\right)^2 + \frac{2d}{\ell} \alpha_j\norm{P_j^\top \nabla f_j}{}^2 - \frac{2}{\lambda} \alpha_j \norm{\nabla f_j}{}^2 \\
	& \ \ + 2\alpha_j \langle P_j P_j^\top \nabla f_j -\nabla f_j, x_*-x_{j} \rangle+ \rho_j \norm{x_*-x_{j}}{}.
	\end{split}
	\end{equation*}
	Taking expectations in the previous bound and denoting $u_j:=\sqrt{\mathbb{E}\norm{x_j-x_*}^2}$, we get that for every $j\in\N$
		\begin{equation*}
	\begin{split}
	u_{j+1}^2 -u_j^2 &\leq \frac{\Lambda^2}{2} \left(\alpha_j h_j\right)^2 + \frac{2d}{\ell} \alpha_j^2\mathbb{E}\left[\norm{\nabla f_j}{}^2\right] - \frac{2}{\lambda}\alpha_j \mathbb{E}\left[\norm{\nabla f_j}{}^2\right]  + \rho_j\mathbb{E}\sqrt{\norm{x_*-x_{j}}{}^2}\\
	&\leq \frac{\Lambda^2}{2} \left(\alpha_j h_j\right)^2 - 2\xi\alpha_j \mathbb{E}\left[\norm{\nabla f_j}{}^2\right] + \rho_j u_j,
	\end{split}
	\end{equation*}
	where we recall that $\xi:=\frac{1}{\lambda}-\frac{d\aup}{\ell}>0$. Then, summing from $j=0$ to $j=k$,
		\begin{equation*}
	\begin{split}
	u_{k+1}^2 +2\xi\sum_{j=0}^{k}\alpha_j \mathbb{E}\left[\norm{\nabla f_j}{}^2\right]\leq \underbrace{u_0^2+\sum_{j=0}^{k}\frac{\Lambda^2}{2} \left(\alpha_j h_j\right)^2}_{=S_{k}} +\sum_{j=0}^{k} \rho_j u_j,
	\end{split}
	\end{equation*}
    and
    \begin{equation}\label{bound1}
	\begin{split}
    \sum_{j=0}^{k}\alpha_j \mathbb{E}\left[\norm{\nabla f_j}{}^2\right]\leq \frac{1}{2\xi}\left[S_{k} +\sum_{j=0}^{k} \rho_j u_j\right]
	\end{split}
	\end{equation}
	and, by discrete Bihari's Lemma \ref{Bihari}, for every $j\in\N$
	\begin{equation}\label{bound2}
	u_j\leq \frac{1}{2}\sum_{i=0}^{j}\rho_i+\left[S_{j-1}+\left(\frac{1}{2}\sum_{i=0}^{j}\rho_i\right)^2\right]^{1/2}\leq \sqrt{S_{j-1}}+ \sum_{i=0}^{j}\rho_i.    
	\end{equation}
	Starting again from the intermediate inequality in \ref{ineqforerror}, for every $j\in\N$,
		\begin{equation*}
	\begin{split}
	\norm{x_{j+1}-x_*}{}^2 -\norm{x_{j}-x_*}{}^2 &  \leq \frac{\Lambda^2}{2} \left(\alpha_j h_j\right)^2 + \frac{2d}{\ell} \alpha_j^2\norm{P_j^\top \nabla f_j}{}^2 + 2\alpha_j \langle \nabla f_j, x_*-x_{j} \rangle \\
	& \ \ + 2\alpha_j \langle P_j P_j^\top \nabla f_j -\nabla f_j, x_*-x_{j} \rangle+ \rho_j \norm{x_*-x_{j}}{}\\
		\left(\text{convexity}\right) \ &\leq \frac{\Lambda^2}{2} \left(\alpha_j h_j\right)^2 + \frac{2d}{\ell} \alpha_j\norm{P_j^\top \nabla f_j}{}^2 - 2\alpha_j (f_j-f_*) \\
	& \ \ + 2\alpha_j \langle P_j P_j^\top \nabla f_j -\nabla f_j, x_*-x_{j} \rangle+ \rho_j \norm{x_*-x_{j}}{}.
	\end{split}
	\end{equation*}
	Taking expectations, we get
		\begin{equation*}
	\begin{split}
	u_{j+1}^2 -u_j^2 +2\alpha_j \mathbb{E}(f_j-f_*)
	&\leq \frac{\Lambda^2}{2} \left(\alpha_j h_j\right)^2 + \frac{2d}{\ell} \alpha_j\mathbb{E}\left[\norm{ \nabla f_j}{}^2\right] + \rho_j u_j.
	\end{split}
	\end{equation*}
	Summing from $j=0$ to $j=k$, we conclude the first claim:
		\begin{equation*}
	\begin{split}
	u_{k+1}^2 +2 \sum_{j=0}^{k}\alpha_j\mathbb{E}(f_j-f_*)&\leq S_{k} + \frac{2d}{\ell}\sum_{j=0}^{k} \alpha_j\mathbb{E}\left[\norm{ \nabla f_j}{}^2\right] + \sum_{j=0}^{k}  \rho_j u_j\\
	\eqref{bound1} \quad &\leq S_{k} + \frac{d}{\xi\ell}\left[S_{k} +\sum_{j=0}^{k} \rho_j u_j\right] + \sum_{j=0}^{k}  \rho_j u_j\\
	&= \frac{d+\xi \ell}{\xi\ell}\left[S_{k} +  \sum_{j=0}^{k}  \rho_j u_j\right]\\
	\eqref{bound2} \quad &\leq \frac{d+\xi \ell}{\xi\ell}\left[S_{k} +  \sum_{j=0}^{k}  \rho_j \left(\sqrt{S_{j-1}}+\sum_{i=0}^j \rho_i\right) \right]\\
	&= 2\tilde{D}_{k}.
	\end{split}
	\end{equation*}
For the second claim, for $\alpha_k=\alpha$ and $h_k\leq \hup$, define $\overline{\rho}:=\frac{ \lambda d^2}{\ell\sqrt{\ell}} \left(\alpha \hup\right)$ and note that
\begin{equation*}
	\begin{split}
	    	\tilde{D}_k & \leq \frac{d+\xi \ell}{2\xi\ell}\left[\norm{x_0-x_*}{}^2+\sum_{j=0}^{k}\frac{\Lambda^2}{2} \left(\alpha_j h_j\right)^2 +  \norm{x_0-x_*}{}\sum_{j=0}^{k}  \rho_j\right.\\
	       & \quad \quad \left. +\sum_{j=0}^{k}\rho_j\left(\sum_{i=0}^{j}\frac{\Lambda}{\sqrt{2}} \left(\alpha_i h_i\right)\right)  +\sum_{j=0}^{k}  \rho_j\left(\sum_{i=0}^{j} \rho_i \right)\right]\\
	    	& \leq \frac{d+\xi \ell}{2\xi\ell}\left[\norm{x_0-x_*}{}^2+\frac{\Lambda^2}{2} \alpha^2 \hup^2\sum_{j=0}^{k}1 + \overline{\rho} \norm{x_0-x_*}{}\sum_{j=0}^{k} 1\right. \\
	    	& \quad \quad \left. +\frac{\Lambda}{\sqrt{2}} \alpha \hup\overline{\rho} \sum_{j=0}^{k}\sum_{i=0}^{j} 1+\overline{\rho}^2\sum_{j=0}^{k} \sum_{i=0}^{j} 1 \right]\\
	    	& \leq \tilde{C}_1+\tilde{C}_2\hup^2(k+1)+\tilde{C}_3\hup(k+1) +\tilde{C}_4\hup^2(k+1)(k+2),
	\end{split}
	\end{equation*}
	where $\tilde{C}_1, ..., \tilde{C}_4$ are appropriate constants. Recalling that $\alpha_k$ is assumed to be constant and so that $\sum_{j=0}^k\alpha_j=\alpha (k+1)$, by trivial manipulations we get that
		\begin{equation*}
	\begin{split}
	\min_{j\in[k]}\mathbb{E}(f_j-f_*)&\leq \frac{\bar{C}_1}{k+1}+\bar{C}_2 (h+\hup^2)+\bar{C}_3 \hup^2 k, 
	\end{split}
	\end{equation*}
	where $\bar{C}_1, \bar{C}_2$ and $\bar{C}_3$ are again appropriate constants. We conclude simply by noticing that, for $k\leq 1/\hup$, the right hand side is bounded by $\bar{C}_1/(k+1)+\bar{C}_2 (h+\hup^2)+\bar{C}_3 \hup$; while, for $k>1/\hup$, the sequence $\min_{j\in[k]}\mathbb{E}(f_j-f_*)$ is non-increasing in $k$ and so controlled by the bound at $k=1/\hup$, that is $\bar{C}_1/(1/\hup+1)+\bar{C}_2 (h+\hup^2)+\bar{C}_3 \hup\leq (\bar{C}_1+\bar{C}_2+\bar{C}_3)\hup+\bar{C}_2 \hup^2$.
\end{proof}

\begin{remark}\label{rmk:Nest_Comp0}
The case of a single random direction at each iterations $(\ell=1)$ sampled from a normal distribution is studied in \cite{nesterov2011random,nesterov2017random}. Under Hypothesis \ref{H_lip} and \ref{H_existence}, \cite[Theorem 8]{nesterov2017random} states the following result:
taking constant $\alpha_k=\frac{1}{4\lambda\left(d+4\right)}$ and $h_k=h$, 
	\begin{equation}\label{eq:ourvsNest1}
	\begin{split}
	\min_{j\in [k]}\mathbb{E}\left[f(x_j)-f_{*}\right] & \leq \frac{4\lambda\left(d+4\right)\norm{x_0-x_*}{}^2}{k} + \frac{9\lambda h^2\left(d+4\right)^2}{25}.
	\end{split}
	\end{equation}
	Comparing this bound with the one obtained in Theorem~\ref{th:convex_inexact}, we see that this is  tighter. The main difference between the two approaches 
	is due to the sampling  of the random direction. The one adopted in  \cite{nesterov2017random}, is such that the expectation of the finite difference approximation of the directional derivative is the gradient of a (Gaussian) smoothing of $f$, while such a property does not hold under our assumptions, and a different proof is needed.  
	We will see in Remark~\ref{rmk:Nest_Comp}, that the different bounds lead to very similar results in terms of accuracy if the discretization error in our method is allowed to go to zero.

\end{remark}

Next, we develop our analysis for the case of a decreasing sequence of discretization parameters $(h_k)$  which allows for a finer  discretization and an increasingly  accurate approximation of the exact gradient. 

\subsection{Basic results with coarser discretization}\label{sec:basic}

We begin considering very mild assumptions on the speed at which the discretization sequence vanishes. Using Lemma \ref{lem:Fejer}  we prove a.s. convergence for the function value of the best iterate, as well as a sublinear rate in expectation. The following result is an extended version of 
 item $(ii)$ in Theorem~\ref{Th:main1}.  

\begin{theorem}\label{th:FejerWeak}
Assume  \ref{H_lip}, \ref{H_existence}, \ref{A_Pas}, \ref{A_PE}, \ref{A_stepwith1} and \ref{A_stepnotinell1}. Assume also that $\seq{\alpha_k h_k}  \in \ell^1$. Let $\seq{x_k}$ be a random sequence generated by Algorithm~\ref{eqn: derivative-free}. Then,
	\begin{equation*}
	\lim_k f(x_k) \eas f_*.
	\end{equation*}
	Moreover, we have the following convergence rate for the best iterate in expectation:
	\begin{equation*}
	\begin{split}
	\min_{j\in[k]}\mathbb{E}\left[f_j-f_*\right] \quad \leq \quad \frac{D}{\sum_{j=0}^{k}\alpha_j},
	\end{split}
	\end{equation*}
where the constant $D>0$ is provided in the proof.
\end{theorem}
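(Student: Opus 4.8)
The plan is to read everything off the stochastic Fejér inequality of Lemma~\ref{lem:Fejer}. Writing $\beta_k := \alpha_k h_k d^2/(2\ell\sqrt{\ell})$ and keeping $D_k$ as in \eqref{def:d}, that inequality says, almost surely,
\[
\mathbb{E}\!\left[\norm{x_{k+1}-x_*}{}^2 \mid \mathcal{F}_k\right] \leq (1+\beta_k)\norm{x_k-x_*}{}^2 + D_k + \frac{2d}{\ell}\alpha_k^2\norm{\nabla f_k}{}^2 + 2\alpha_k\langle \nabla f_k, x_*-x_k\rangle .
\]
First I would absorb the last two terms. Fix $\theta\in(\Lambda\aup,1)$, a nonempty interval because $\aup<1/\Lambda$ under \ref{A_stepwith1}, and split $2\alpha_k\langle\nabla f_k,x_*-x_k\rangle$ into a $\theta$-part and a $(1-\theta)$-part. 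On the $\theta$-part use the Baillon--Haddad cocoercivity (Lemma~\ref{BaillHadd}), $\langle\nabla f_k,x_k-x_*\rangle\ge\norm{\nabla f_k}{}^2/\lambda$, so that $\frac{2d}{\ell}\alpha_k^2\norm{\nabla f_k}{}^2-\frac{2\theta\alpha_k}{\lambda}\norm{\nabla f_k}{}^2 = 2\alpha_k(d\alpha_k/\ell-\theta/\lambda)\norm{\nabla f_k}{}^2\le 0$, since $\Lambda\alpha_k\le\Lambda\aup<\theta$; on the $(1-\theta)$-part use convexity, $\langle\nabla f_k,x_k-x_*\rangle\ge f_k-f_*$. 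This leaves the clean recursion
\[
\mathbb{E}\!\left[\norm{x_{k+1}-x_*}{}^2 \mid \mathcal{F}_k\right] \leq (1+\beta_k)\norm{x_k-x_*}{}^2 + D_k - 2(1-\theta)\alpha_k(f_k-f_*) \qquad \text{a.s.}
\]
Since $\alpha_kh_k\le\aup\hup$, the standing hypothesis $(\alpha_kh_k)\in\ell^1$ makes $\sum_k\beta_k<\infty$, and (using $(\alpha_kh_k)^2\le\aup\hup\,\alpha_kh_k$) also $\sum_k D_k<\infty$.

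For the almost sure statement, the last display has the Robbins--Siegmund form with $V_k=\norm{x_k-x_*}{}^2$, summable multiplicative perturbation $\beta_k$, summable additive term $D_k$, and nonnegative decrement $2(1-\theta)\alpha_k(f_k-f_*)$; the Robbins--Siegmund theorem then yields that $\norm{x_k-x_*}{}$ converges a.s. and $\sum_k\alpha_k(f_k-f_*)<\infty$ a.s. As $(\alpha_k)\notin\ell^1$ by \ref{A_stepnotinell1}, this forces $\liminf_k(f_k-f_*)=0$ a.s. To turn the $\liminf$ into a genuine limit I would invoke Proposition~\ref{prop:as} (applicable since \ref{A_stepwith1} implies \ref{A_stepwith2}): from $f_{k+1}-f_k\le C\alpha_kh_k^2$ a.s. and $\sum_k\alpha_kh_k^2\le\hup\sum_k\alpha_kh_k<\infty$, the sequence $f_k-C\sum_{j<k}\alpha_jh_j^2$ is a.s. nonincreasing and bounded below, hence $(f_k)$ converges a.s.; together with $\liminf(f_k-f_*)=0$ this gives $\lim_k f(x_k)\eas f_*$.

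For the rate I would take total expectations and put $u_k^2:=\mathbb{E}\norm{x_k-x_*}{}^2$, obtaining $u_{k+1}^2\le(1+\beta_k)u_k^2+D_k-2(1-\theta)\alpha_k\mathbb{E}[f_k-f_*]$. Dropping the decrement and iterating this linear recursion (discrete Grönwall) gives the uniform bound $u_k^2\le \exp\!\big(\sum_j\beta_j\big)\big(u_0^2+\sum_j D_j\big)=:M<\infty$ — the step where summability of $(\alpha_kh_k)$ is truly needed. Reinserting $M$, summing over $j=0,\dots,k$ and using $u_{k+1}^2\ge0$ gives $2(1-\theta)\sum_{j=0}^k\alpha_j\mathbb{E}[f_j-f_*]\le \norm{x_0-x_*}{}^2+M\sum_j\beta_j+\sum_j D_j$, so with $D:=(2(1-\theta))^{-1}\big(\norm{x_0-x_*}{}^2+M\sum_j\beta_j+\sum_j D_j\big)$ and the trivial bound $\min_{j\in[k]}\mathbb{E}[f_j-f_*]\cdot\sum_{j=0}^k\alpha_j\le\sum_{j=0}^k\alpha_j\mathbb{E}[f_j-f_*]$ the claimed estimate follows. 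The main obstacles I anticipate are precisely the two places where the multiplicative factor $1+\beta_k$ has to be tamed — using the multiplicative version of Robbins--Siegmund for the a.s. part and the Grönwall bound on $u_k^2$ for the rate — together with the passage from $\liminf(f_k-f_*)=0$ to the full limit, which is what forces the quasi-descent estimate of Proposition~\ref{prop:as} into the argument.
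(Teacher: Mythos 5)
Your proof is correct, and it reaches both conclusions by a route that differs from the paper's at the key decomposition step. The paper applies Lemma~\ref{lem:Fejer} \emph{twice}: first with the Baillon--Haddad cocoercivity bound on the whole term $2\alpha_k\langle\nabla f_k,x_*-x_k\rangle$, feeding the result into Robbins--Siegmund to conclude that $\seq{\alpha_k\norm{\nabla f_k}{}^2}\in\ell^1$ a.s.\ (and, after taking expectations, that $\sum_j\alpha_j\mathbb{E}\norm{\nabla f_j}{}^2$ is finite); it then applies the same inequality a second time with the convexity bound, treating $\tfrac{2d\aup}{\ell}\alpha_k\norm{\nabla f_k}{}^2$ as a summable error term in a second Robbins--Siegmund pass to get $\seq{\alpha_k(f_k-f_*)}\in\ell^1$ a.s., and carries the gradient-norm sums through the rate computation. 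Your $\theta$-split of the inner product (cocoercivity on the $\theta$-fraction to absorb $\tfrac{2d}{\ell}\alpha_k^2\norm{\nabla f_k}{}^2$, convexity on the $(1-\theta)$-fraction to produce the decrement $2(1-\theta)\alpha_k(f_k-f_*)$) collapses this into a single clean recursion and a single Robbins--Siegmund application, eliminating all bookkeeping of $\mathbb{E}\norm{\nabla f_j}{}^2$; correspondingly, for the rate you replace the paper's use of Lemma~\ref{PC_det} (which only gives an unspecified bound $C(x_*)$ on $\mathbb{E}\norm{x_k-x_*}{}^2$) by an explicit discrete Gr\"onwall bound $M=\exp\bigl(\sum_j\beta_j\bigr)\bigl(u_0^2+\sum_jD_j\bigr)$. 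The price is a different (and $\theta$-dependent) constant $D$ than the one the paper displays and reuses in Remarks~\ref{rmk:rates} and~\ref{rmk:Nest_Comp}, but the theorem only promises a constant provided in the proof, so this is immaterial. The remaining ingredients --- $(\alpha_k)\notin\ell^1$ to get the $\liminf$, and Proposition~\ref{prop:as} with $\sum_k\alpha_kh_k^2\le\hup\sum_k\alpha_kh_k<\infty$ to upgrade it to a full a.s.\ limit --- coincide with the paper's.
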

	\begin{example}\label{ex1}
	For every $k\in\N$ let $h_k=h/k^r$ and $\alpha_k=\alpha/k^s$. Let $\alpha\in (0,1/\Lambda)$, $h>0$ and $s\geq 0$ with
 $
	r>1-s\geq 0.
$
	Then $\seq{\alpha_k}\notin \ell^1$, \ $\seq{\alpha_k h_k}\in\ell^1$, so the assumptions of Theorem \ref{th:FejerWeak} are satisfied. For example, the latter holds for  $\alpha_k=\alpha/k$ and $h_k=h/k^r$ with $r>0$ ($\alpha_k$ vanishing and $h_k$ going to zero arbitrarily slow); or for  $\alpha_k=\alpha$ and $h_k=h/k^r$ with $r>1$ ($\alpha_k$ constant and $h_k$ going to zero sufficiently fast).
\end{example}
\begin{proof}
Consider the inequality from Lemma \ref{lem:Fejer}, namely
			\begin{equation}\label{ineq2}
		\begin{split}
		 \mathbb{E}_k\norm{x_{k+1}-x_*}{}^2 - & \left(1+\frac{\alpha_kh_k d^2}{2\ell\sqrt{\ell}}\right) \norm{x_{k}-x_*}{}^2\\
		\leq \ & D_k+\frac{2d}{\ell}\alpha_k^2 \norm{\nabla f_k}{}^2 + 2\alpha_k \langle \nabla f_k, x_*-x_{k} \rangle \\
		\left(\text{cocoercivity of} \ \nabla f, \text{see Thm. \ref{BaillHadd}}\right) \ \leq & \  D_k+\frac{2d}{\ell}\alpha_k^2 \norm{\nabla f_k}{}^2 - \frac{2\alpha_k}{\lambda} \norm{\nabla f_k}{}^2\\
		\left(\ref{A_stepwith1}\right) \ \leq & \  D_k+2\alpha_k\left(\frac{d\aup}{\ell}- \frac{1}{\lambda}\right) \norm{\nabla f_k}{}^2\\
		= & \  D_k-2\xi\alpha_k\norm{\nabla f_k}{}^2,
		\end{split}
		\end{equation}
		where we defined $\xi:=\frac{1}{\lambda}-\frac{d\aup}{\ell}>0$. By the assumptions, $\seq{\alpha_k h_k}\in \ell^1$ and so $\seq{D_k}\in\ell^1$.	Using Robbins-Siegmund Lemma \ref{PC}, we know that $\seq{\norm{x_{k}-x_*}{}}$ is a.s. convergent for every $x_*\in\argmin f$ and that a.s.  $$\seq{\alpha_k\norm{\nabla f_k}{}^2}\in\ell^1.$$
		By Lemma \ref{lem:Fejer},
		\begin{equation}\label{needed2}
		\begin{split}
		& \mathbb{E}_k\left[\norm{x_{k+1}-x_*}{}^2\right] - \left(1+\frac{\alpha_kh_k d^2}{2\ell\sqrt{\ell}}\right) \norm{x_{k}-x_*}{}^2\\
		\leq & \ D_k+\frac{2d}{\ell}\alpha_k^2 \norm{\nabla f_k}{}^2 + 2\alpha_k \langle \nabla f_k, x_*-x_{k} \rangle \\
		\left(\text{convexity of $f$ and } \ref{A_stepwith1}\right) \ \leq & \ D_k+\frac{2d\aup}{\ell}\alpha_k \norm{\nabla f_k}{}^2 -2\alpha_k\left(f_k-f_*\right).
		\end{split}
		\end{equation}
		By the Robbins-Siegmund Lemma \ref{PC}, we get that a.s.  
		\begin{equation}\label{condl1}
		\seq{\alpha_k\left(f_k-f_*\right)}\in\ell^1.
		\end{equation}
		We know that $f_k\geq f_*$ and, by Assumption \ref{A_stepnotinell1}, the sequence $(\alpha_k)$ is positive and does not belong to $\ell^1$. So,
		\begin{equation}\label{liminf}
		\liminf_k f(x_k) \eas f_*.
		\end{equation}
		Recall that, from Proposition \ref{prop:as}, we have that \begin{equation*}
	    \begin{split}
	    f_{k+1}-f_k & \leq C \alpha_k h_k^2  \ \ \ \ \ \ \ a.s.
	    \end{split}
	    \end{equation*}
	    By the assumptions $(\alpha_k h_k)\in\ell^1$ and $h_k$ bounded, we know that also $(\alpha_k h_k^2)$ belongs to $\ell^1$. Then, from Lemma \ref{PC_det}, $f_k$ is a.s. convergent. This implies, joint with \ref{liminf}, that $\lim_k f(x_k) \eas f_*$.\\
		For the convergence rate, first take the total expectation in \eqref{ineq2}, 
		\begin{equation*}
		\begin{split}
		\mathbb{E} \norm{x_{j+1}-x_*}{}^2 - \left(1+\frac{\alpha_kh_k d^2}{2\ell\sqrt{\ell}}\right)\mathbb{E} \norm{x_{j}-x_*}{}^2 +  2\xi\alpha_j \mathbb{E}\norm{\nabla f_j}{}^2 &\leq D_j .
		\end{split}
		\end{equation*}
		Applying Lemma \ref{PC_det} to the deterministic sequence $\seq{\mathbb{E}\left[\norm{x_{k}-x_*}{}^2\right]}$ we get convergence for every $x_*\in\argmin f$ (and so the sequence is bounded above by some constant $C(x_*)$).
		Moreover, summing from $j=0$ to $j=k$,
		\begin{equation}\label{intermediate}
		\begin{split}
		\sum_{j=0}^k\alpha_j \mathbb{E}\norm{\nabla f_j}{}^2&\leq \frac{1}{2\xi } \sum_{j=0}^k\left( \mathbb{E} \norm{x_{j}-x_*}{}^2 -\mathbb{E}\norm{x_{j+1}-x_*}{}^2\right)\\
		& \quad + \frac{C(x_*)d^2}{4\xi\ell\sqrt{\ell}} \sum_{j=0}^k\alpha_j h_j+ \frac{1}{2\xi }\sum_{j=0}^k D_j \\
		& = \frac{1}{2\xi } \left(\mathbb{E} \norm{x_{0}-x_*}{}^2 -\mathbb{E}\norm{x_{k+1}-x_*}{}^2\right)\\
		& \quad + \frac{C(x_*)d^2}{4\xi\ell\sqrt{\ell}} \sum_{j=0}^k\alpha_j h_j+ \frac{1}{2\xi }\sum_{j=0}^k D_j \\
		& \leq \frac{1}{2\xi } \left(\norm{x_{0}-x_*}{}^2+ \frac{C(x_*)d^2}{2\ell\sqrt{\ell}} \sum_{j=0}^{+\infty}\alpha_j h_j+ \sum_{j=0}^{+\infty} D_j\right).
		\end{split}
		\end{equation}
		Taking the total expectation in inequality \eqref{needed2} and recalling that $\mathbb{E}\left[\norm{x_{k}-x_*}{}^2\right]$ is bounded above by some constant $C(x_*)$, we get that, for every $j\in \N$,
		\begin{equation*}
			\begin{split}
			\alpha_j\ \mathbb{E}\left(f_j-f_*\right) & \leq \frac{1}{2}\mathbb{E}\norm{x_{j}-x_*}{}^2-\frac{1}{2}\mathbb{E}\norm{x_{j+1}-x_*}{}^2+ \frac{C(x_*)d^2}{4\ell\sqrt{\ell}} \ \alpha_j h_j\\
			& \quad + \frac{D_j}{2}+\frac{d\aup}{\ell}\alpha_j \mathbb{E}\norm{\nabla f_j}{}^2.
			\end{split}
		\end{equation*}
		Summing from $j=0$ to $j=k$,
		\begin{equation*}
			\begin{split}
				& \sum_{j=0}^{k}\alpha_j \ \mathbb{E}\left(f_j -f_*\right)\\
		\leq \quad &
			\frac{1}{2}	\mathbb{E}\norm{x_{0}-x_*}{}^2-\frac{1}{2}\mathbb{E}\norm{x_{k+1}-x_*}{}^2 + \frac{C(x_*)d^2}{4\ell\sqrt{\ell}}\sum_{j=0}^{k}\alpha_j h_j \\
			& +\frac{1}{2} \sum_{j=0}^{k}D_j+\frac{d\aup}{\ell}\sum_{j=0}^k \alpha_j \mathbb{E}\norm{\nabla f_j}{}^2 \\
			\eqref{intermediate} \ \leq \quad & \frac{1}{2}\norm{x_{0}-x_*}{}^2+ \frac{C(x_*)d^2}{4\ell\sqrt{\ell}} \sum_{j=0}^{+\infty}\alpha_j h_j+\frac{1}{2} \sum_{j=0}^{+\infty} D_j\\
			& \quad  +\frac{d\aup}{2\xi\ell}\left( \norm{x_{0}-x_*}{}^2+ \frac{C(x_*)d^2}{2\ell\sqrt{\ell}} \sum_{j=0}^{+\infty}\alpha_j h_j+\sum_{j=0}^{+\infty} D_j\right)\\
			= \quad &  \frac{\xi\ell+d\aup}{2\xi\ell}\left(\norm{x_{0}-x_*}{}^2+ \frac{C(x_*)d^2}{2\ell\sqrt{\ell}} \sum_{j=0}^{+\infty}\alpha_j h_j\right.\\
			&\left. \quad  +\frac{\Lambda^2}{2}\sum_{j=0}^{+\infty} \left[ d\left(\alpha_j h_j\right)^2+\sqrt{\ell} \left(\alpha_j h_j\right)\right]\right)\\
			\leq \quad & \frac{1}{2\left(1-\Lambda \aup\right)}\left(\norm{x_{0}-x_*}{}^2+ \frac{1}{2}\left(\frac{C(x_*)d^2}{\ell\sqrt{\ell}}+\Lambda d \hup +\Lambda^2\sqrt{\ell}\right) \sum_{j=0}^{+\infty}\alpha_j h_j\right)\\
			& = : D < + \infty.
			\end{split}
		\end{equation*}
		We obtain the bound by noticing that
		\begin{equation*}
			\begin{split}
				\min_{j\in[k]}\mathbb{E}\left[f_j-f_*\right] \sum_{j=0}^{k}\alpha_j & \leq \sum_{j=0}^{k}\alpha_j \ \mathbb{E}\left(f_j -f_*\right).
			\end{split}
		\end{equation*}
\end{proof}

\subsection{Improved results with finer discretization}\label{sec:improv}
 We next make stronger assumptions on the sequences $\seq{\alpha_k}$ and $\seq{h_k}$ allowing us to derive an a.s. convergence result for the function values and a.s. convergence of the iterates to a solution.
 The following result is an extended version of Theorem~\ref{Th:main1} $(iii)$.  
\begin{theorem}\label{th:FejerStrong}
	Under the same conditions as in Theorem \ref{th:FejerWeak}, but with the stronger Assumption \ref{A_stepbb} instead of \ref{A_stepnotinell1}. Namely, assume \ref{H_lip}, \ref{H_existence}, \ref{A_Pas}, \ref{A_PE} and $0<\alow\leq\alpha_k\leq \aup < 1/\Lambda$ and $\seq{h_k}\in \ell^1$. Let $\seq{x_k}$ be a  sequence generated by Algorithm~\ref{eqn: derivative-free}. Then,
	\begin{equation*}
	\lim_k f_k \eas f_*
	\end{equation*}
	and there exists a random variable $x_*$ with values in $\argmin f$ such that $	x_k \toas x_*.$
\end{theorem}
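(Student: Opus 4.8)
The plan is to upgrade the arguments of Theorem~\ref{th:FejerWeak} using the two new features of Assumption~\ref{A_stepbb}: the stepsizes are bounded below by $\alow>0$, and $\seq{h_k}\in\ell^1$ (which, together with $\seq{\alpha_k}$ bounded above, gives $\seq{\alpha_k h_k}\in\ell^1$, so all conclusions of Theorem~\ref{th:FejerWeak} remain available). First I would record what Theorem~\ref{th:FejerWeak} already gives us: applying the Robbins–Siegmund Lemma~\ref{PC} to the inequality in Lemma~\ref{lem:Fejer} (after using cocoercivity, see~\eqref{ineq2}), the sequence $\seq{\norm{x_k-x_*}}$ converges a.s.\ for every fixed $x_*\in\argmin f$, the series $\seq{\alpha_k\norm{\nabla f_k}^2}$ is a.s.\ summable, and — from the convexity version~\eqref{needed2} — the series $\seq{\alpha_k(f_k-f_*)}$ is a.s.\ summable; moreover $\lim_k f_k \eas f_*$ was already established there, so the first claim of the present theorem is immediate. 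The work is entirely in the a.s.\ convergence of $\seq{x_k}$ itself.

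For the iterate convergence I would invoke the standard Opial-type / stochastic Fejér argument. The two ingredients of Opial's lemma are: (a) $\seq{\norm{x_k-x_*}}$ converges a.s.\ for every $x_*\in\argmin f$ — already in hand; and (b) every a.s.\ cluster point of $\seq{x_k}$ lies in $\argmin f$ a.s. For (b): since $\seq{\alpha_k(f_k-f_*)}\in\ell^1$ a.s.\ and $\alpha_k\geq\alow>0$, we get $\seq{f_k-f_*}\in\ell^1$ a.s., hence $f_k\to f_*$ a.s.\ (this also re-derives the first claim without needing Proposition~\ref{prop:as}). Then, from (a), on an event of probability one $\seq{x_k}$ is bounded, so it has cluster points; if $\bar x$ is a subsequential limit along $x_{k_n}\to\bar x$, continuity of $f$ (implied by \ref{H_lip}) gives $f(\bar x)=\lim_n f_{k_n}=f_*$, so $\bar x\in\argmin f$. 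The delicate point is that Opial's lemma requires (a) to hold for a fixed $x_*$ along a single probability-one event that does not depend on $x_*$; the usual fix is to take a countable dense subset $D\subset\argmin f$, intersect the (countably many) probability-one events on which $\norm{x_k-x_*}$ converges for $x_*\in D$, and then extend the convergence to all of $\argmin f$ by a density/continuity argument (using that $\argmin f$ is closed and convex and that the map $x_*\mapsto \lim_k\norm{x_k-x_*}$ is $1$-Lipschitz where defined). On that full-measure event, (a) and (b) hold simultaneously, so the deterministic Opial lemma applies pathwise and yields a (necessarily measurable, as a pointwise limit of the measurable $x_k$) limit $x_*$ with values in $\argmin f$ such that $x_k\toas x_*$.

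I expect the main obstacle to be exactly this measurability/uniformity bookkeeping in the stochastic Opial step — getting the "$\norm{x_k-x_*}$ converges for \emph{every} $x_*\in\argmin f$" statement to hold on one common event, and arguing that the resulting limit point is a bona fide random variable rather than merely defined path-by-path. The analytic estimates, by contrast, are routine: they are just the $\ell^1$-summability consequences of Lemma~\ref{lem:Fejer} plus $\alpha_k\geq\alow$, which are strictly easier than what was already done for Theorem~\ref{th:FejerWeak}. One should also double-check the harmless technical points: that $\argmin f$ is nonempty (\ref{H_existence}), closed and convex (convexity of $f$), and that the extra multiplicative factor $1+\alpha_k h_k d^2/(2\ell\sqrt{\ell})$ in Lemma~\ref{lem:Fejer} is compatible with Robbins–Siegmund since $\seq{\alpha_k h_k}\in\ell^1$.
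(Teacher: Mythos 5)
Your proposal is correct and follows essentially the same route as the paper: both extract the a.s.\ convergence of $\norm{x_k-x_*}$ and the a.s.\ summability of $\seq{\alpha_k(f_k-f_*)}$ from the proof of Theorem~\ref{th:FejerWeak}, use $\alpha_k\geq\alow>0$ to conclude $f_k\to f_*$ a.s., identify cluster points as minimizers via continuity of $f$, and invoke the stochastic Opial lemma. The measurability/uniformity bookkeeping you flag is precisely what the paper handles in Lemma~\ref{aux} and Lemma~\ref{StochOpial} via a countable dense subset of $\argmin f$, exactly as you describe.
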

\begin{example}
	For every $k\in\N$, let $\alpha_k=\alpha$ constant in $\left(0,1/\Lambda\right)$ and $h_k=h/k^r$ with $h>0$ and $r>1$. Then the assumptions of Theorem \ref{th:FejerStrong} hold. These conditions are a special case of those in Example \ref{ex1}. In general, under Assumption \ref{A_stepbb} (required for Theorem \ref{th:FejerStrong}), the stepsize $\alpha_k$ is uniformly bounded below by a strictly positive constant and so it can not converge to zero. Then,  to get the condition $\seq{\alpha_k h_k}\in\ell^1$, $h_k$ can not converge to zero arbitrarily slowly as in Example \ref{ex1}. Indeed, for Theorem \ref{th:FejerStrong} to hold with $h_k$ of the form $h/k^r$, $h_k$ has to converge to zero strictly faster than $1/k$.
\end{example}	
\begin{remark}\label{rmk:rates}
	Under the assumptions in Theorem \ref{th:FejerStrong}, the convergence rate in Theorem \ref{th:FejerWeak} holds and reads as
	\begin{equation*}
	\begin{split}
	\min_{j\in[k]}\mathbb{E}\left[f_j-f_*\right] \quad 
	\leq  \quad \frac{D}{\alow k}.
	\end{split}
	\end{equation*}
\end{remark}
\begin{proof}
	From the proof of the previous theorem, we see that for every $x_*\in\argmin f$  the sequence $\seq{\norm{x_{k}-x_*}{}}$ is a.s. convergent and  a.s. $\seq{\alpha_k\left(f_k-f_*\right)}\in\ell^1$. From Assumption \ref{A_stepbb}, $0<\alow\leq \alpha_k$ and so the sequence $\left(f_k-f_*\right)$ is non-negative and belongs to $\ell^1$ a.s. In particular,
	\begin{equation*}
	\lim_k f_k \eas f_*.
	\end{equation*}
	More precisely, there is a $\bar{\Omega}\subseteq\Omega$ with $\mathbb{P}(\bar{\Omega})=1$ such that, for every $\omega \in \bar{\Omega}$, 
	\begin{equation}\label{limmm}
	    \lim_k f(x_k(\omega)) = f_*.
	\end{equation}
	For $\omega\in \bar{\Omega}$, let $x_{k_j}(\omega)$ be a convergent subsequence of $x_{k}(\omega)$; say $x_{k_j}(\omega) \to x_{\infty}$. Then, by continuity of the function $f$ and the limit in \eqref{limmm},
$$f(x_{\infty}) = \lim\limits_{k} f(x_k(\omega)) = f_*.$$
Then $x_{\infty}\in\argmin f$, as it is a  minimizer of  $f$. Summarizing, there is a full measure set for which every cluster point of the random sequence $\seq{x_k}$ belongs to $\argmin f$. Finally, combining the latter result with the fact that, for every $x_*\in\argmin f$, $\seq{\norm{x_{k}-x_*}{}}$ is a.s. convergent, the stochastic version of Opial's Lemma \ref{StochOpial} guarantees the existence of a random variable $x_*$ with values in $\argmin f$ such that $x_k \toas x_*$.
\end{proof}
In the next remarks we compare our rates on the objective function with results available in the literature. Recall that none of the considered papers prove the convergence of the iterates.
\begin{remark}\label{rmk:Nest_Comp} We compare our results for a vanishing discretization with the ones obtained in \cite[Theorem 8]{nesterov2017random} for a single direction sampled according to a normal distribution (see also Remark~\ref{rmk:Nest_Comp0}).
	Choosing $\alpha_k=\alpha$ constant in $\left(0,1/\Lambda\right)$ and $h_k=h/k^r$ with $h>0$ and $r>1$, from Remark \ref{rmk:rates} we get that
	\begin{equation*}
	\min_{j\in[k]}\mathbb{E}\left[f(x_j) -f_*\right]  \leq  \ \ \frac{D}{\alpha k},
	\end{equation*}
	where
	$$D:=\frac{1}{2\left(1-\Lambda \alpha\right)}\left(\norm{x_{0}-x_*}{}^2+ \frac{\alpha}{2}\left(\frac{C(x_*)d^2}{\ell\sqrt{\ell}}+\Lambda d h +\Lambda^2\sqrt{\ell}\right) \sum_{j=0}^{+\infty} h_j\right).$$
For the special case $\alpha=\ell/(2\lambda d)$ and $\ell=1$ we derive (recalling that $\sum_{j=0}^{+\infty} 1/k^r =\zeta(r)<+\infty$ where $\zeta$ is the Riemann zeta function), 
	\begin{equation}\label{eq:ourvsNest2}
	\min_{j\in[k]}\mathbb{E}\left[f(x_j) -f_*\right]  \leq  \frac{2\lambda d \norm{x_{0}-x_*}{}^2}{ k} + \frac{h d^2\zeta(r)}{2 k}\left(C(x_*)+ \lambda h +\lambda^2\right). \ \ 
	\end{equation}
Comparing equations \eqref{eq:ourvsNest1} and \eqref{eq:ourvsNest2} we see that the dependence on the dimension is
the same however our result converges to the optimum because we chose a decreasing discretization parameter. In addition, we
are free to choose the stepsize bigger than the one proposed in \cite{nesterov2011random,nesterov2017random} resulting in slightly better constants. 
On the other hand, in \cite{nesterov2017random}, they also study the case of accelerated inertial algorithms. A similar comparison to the one above can be done also with the results in \cite{ghadimi2013stochastic}. 
\end{remark}
\begin{remark}\label{rmk:Duchi_Comp}
The minimization of a smooth function via a zeroth-order oracle is also considered in  \cite{duchi2015optimal}. The assumptions in that paper are different from ours, both in terms of properties of the objective function, as well as of the available zeroth-order oracle. Regarding the objective function,  in addition to the Lipschitz continuity of the gradient, the authors of \cite{duchi2015optimal} require more restrictive assumptions, such as boundedness of  the gradient itself on the entire feasible set, which is assumed to be compact. The zeroth-order oracle instead is more general than ours, and consists of noisy function evaluations. In their setting $G$ denotes the bound on the gradient of $f$, and $R$ is the diameter of the feasible set. With the choice
\begin{equation}
\label{eq:ourvsDuchi0}
\alpha_k=\frac{\alpha R}{2G\sqrt{d/\ell}\sqrt{k} }\qquad \textrm{and} \qquad  h_k=\frac{u G}{\lambda d^{3/2} k},
\end{equation}
for some $\alpha>0$ and $u>0$, they derive a bound of the form
\begin{equation}\label{eq:ourvsDuchi1}
	\min_{j\in[k]}\mathbb{E}\left[f(x_j) -f_*\right]  \leq  \ \ \frac{5R G\sqrt{1+d/\ell}}{\sqrt{k}} \left( 
	\max\{\alpha,\alpha^{-1}\}+\frac{\alpha {u}^2}{\sqrt{k}}+\frac{u\log(2k)}{k} \right).
	\end{equation}
If we choose $\alpha_k=\ell/(2\lambda d)$ and $h_k=\frac{h}{\lambda d^{3/2} k^r}$ with $r>1$, we get
\begin{equation}\label{eq:ourvsDuchi2}
	\min_{j\in[k]}\mathbb{E}\left[f(x_j) -f_*\right]  \leq  \frac{2\lambda d \norm{x_{0}-x_*}{}^2}{\ell k} + \frac{H}{2 k}\left(\frac{\lambda^2 C(x_*) \sqrt{d}}{\ell\sqrt{\ell}}+ \frac{  h}{\sqrt{d} \ell} +\frac{\lambda \sqrt{d}} {\ell\sqrt{\ell}}\right) \ \ 
	\end{equation}
Comparing \eqref{eq:ourvsDuchi1} and \eqref{eq:ourvsDuchi2}, we observe that we obtain a better convergence rate, due to the fact that we consider a noise-free oracle, but our analysis leads to a worse dependence on the ratio $d/\ell$. 
Since the two settings are very different the significance of the comparison is somewhat limited.

\end{remark}
Finally, in the next section, we consider the case where $h=0$. i.e. Recursion ~\eqref{eqn: gradient-free}.
\subsection{Convergence results for recursion ~\eqref{eqn: gradient-free}}\label{sec:david1}
This section covers the special case of recursion ~\eqref{eqn: gradient-free}, corresponding to the limiting case of Algorithm~\ref{eqn: derivative-free} when exact directional derivatives are available.  Lemma \ref{lem:Fejexact} provides a sharper energy estimate than the one in Lemma \ref{lem:Fejer},  which in turns  leads to the improved convergence results of Theorem~\ref{th:FejerSStrong}.  The result is an extended version of Theorem~\ref{Th:main1}$(iv)$.  
\begin{lemma}\label{lem:Fejexact}
	Assume \ref{H_lip}, \ref{H_existence}, \ref{A_Pas} and \ref{A_PE}. Let $\seq{x_k}$ be a random sequence generated by recursion~\eqref{eqn: gradient-free}. Then, for every $k\in\N$ and every $x_*\in\argmin f$,
	\begin{equation*}
	\begin{split}
	\mathbb{E}\left[\norm{x_{k+1}-x_*}{}^2 \ \big{|}  \mathcal{F}_k\right] -\norm{x_{k}-x_*}{}^2 \eas \frac{\alpha_k^2 d}{\ell} \|\nabla f_k\|^2 +2\alpha_k \langle \nabla f_k, x_*-x_k \rangle .
	\end{split}
	\end{equation*}
\end{lemma}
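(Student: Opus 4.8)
The plan is to mirror the proof of Lemma~\ref{lem:Fejer}, with the key simplification that here the discretization error vanishes ($h=0$), so there is no surrogate-gradient bias term $e_k$ to estimate and every inequality in that argument becomes an exact identity. Concretely, using recursion~\eqref{eqn: gradient-free} I would first expand
\[
\norm{x_{k+1}-x_*}{}^2-\norm{x_k-x_*}{}^2 = \norm{x_{k+1}-x_k}{}^2 + 2\langle x_{k+1}-x_k,\, x_k-x_*\rangle = \alpha_k^2\norm{P_kP_k^\top\nabla f_k}{}^2 - 2\alpha_k\langle P_kP_k^\top\nabla f_k,\, x_k-x_*\rangle.
\]
For the quadratic term, Assumption~\ref{A_Pas} in the form \eqref{conseq:A_Pas} gives $\norm{P_kv}{}^2\eas(d/\ell)\norm{v}{}^2$ for every $v\in\R^{\ell}$; applied with $v=P_k^\top\nabla f_k$ this yields $\norm{P_kP_k^\top\nabla f_k}{}^2\eas(d/\ell)\norm{P_k^\top\nabla f_k}{}^2$. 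For the linear term, the symmetry of $P_kP_k^\top$ lets me rewrite $\langle P_kP_k^\top\nabla f_k,\, x_k-x_*\rangle=\langle\nabla f_k,\, P_kP_k^\top(x_k-x_*)\rangle$.

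Next I would take the conditional expectation given $\mathcal{F}_k$. Arguing exactly as in the proof of Lemma~\ref{lem:Fejer} ($x_k$ and $\nabla f_k$ are $\mathcal{F}_k$-measurable and $P_k$ is independent of $\mathcal{F}_k$), Assumption~\ref{A_PE} gives $\mathbb{E}[\norm{P_k^\top\nabla f_k}{}^2\mid\mathcal{F}_k]=\norm{\nabla f_k}{}^2$ via \eqref{conseq:A_PE}, and $\mathbb{E}[P_kP_k^\top\mid\mathcal{F}_k]=\mathbb{I}$, hence $\mathbb{E}[\langle\nabla f_k,\,P_kP_k^\top(x_k-x_*)\rangle\mid\mathcal{F}_k]=\langle\nabla f_k,\,x_k-x_*\rangle$. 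Substituting these two identities into the expansion above yields
\[
\mathbb{E}\left[\norm{x_{k+1}-x_*}{}^2 \ \big|\ \mathcal{F}_k\right]-\norm{x_k-x_*}{}^2 \eas \frac{\alpha_k^2 d}{\ell}\norm{\nabla f_k}{}^2 + 2\alpha_k\langle\nabla f_k,\,x_*-x_k\rangle,
\]
which is the stated claim. Integrability of all the quantities involved is handled as in Remark~\ref{lemma: integrabilityI}.

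I do not expect a genuine obstacle here: the only delicate points are the justification that the equalities hold almost surely and that the conditional expectations are well-defined, which is the same measurability bookkeeping already carried out for Lemma~\ref{lem:Fejer}. The absence of the finite-difference error $e_k$ is precisely what upgrades the chain of inequalities in that proof to an exact identity, so the present lemma is strictly sharper and its proof strictly shorter.
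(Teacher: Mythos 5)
Your argument is correct and coincides with the paper's own proof: the same expansion of $\norm{x_{k+1}-x_*}{}^2-\norm{x_k-x_*}{}^2$ via recursion~\eqref{eqn: gradient-free}, the same use of \ref{A_Pas} (in the form \eqref{conseq:A_Pas}) to convert $\norm{P_kP_k^\top\nabla f_k}{}^2$ into $(d/\ell)\langle\nabla f_k,P_kP_k^\top\nabla f_k\rangle$, and the same conditional expectation step using \ref{A_PE}. The only difference is cosmetic (you move $P_kP_k^\top$ across the inner product by symmetry before conditioning), so nothing further is needed.
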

\begin{proof}
	For every $k\in\N$ and every $x_*\in\argmin f$, we have that a.s. 
	\begin{equation*}
	\begin{split}
	\norm{x_{k+1}-x_*}{}^2 -\norm{x_{k}-x_*}{}^2 & = \norm{x_{k+1}-x_k}{}^2 + 2 \langle x_{k+1}-x_k, x_{k}-x_* \rangle\\
	\eqref{eqn: gradient-free} \ & = \alpha_k^2 \norm{P_kP_k^\top \nabla f_k}{}^2  - 2\alpha_k \langle P_kP_k^\top \nabla f_k, x_{k}-x_* \rangle\\
	\left(\ref{A_Pas}, \text{ see } \eqref{conseq:A_Pas}\right) \ & =\frac{\alpha_k^2 d}{\ell} \langle \nabla f_k, P_k P_k^\top  \nabla f_k \rangle + 2\alpha_k \langle P_kP_k^\top \nabla f_k, x_* -x_k\rangle.
	\end{split}
	\end{equation*}
	The claim follows  taking the conditional expectation given $\mathcal{F}_k$ and using  \ref{A_PE}.
\end{proof}
Using the estimate from the above lemma and with very mild assumptions on  $\alpha_k$, we get the following result ensuring convergence of the iterates, a rate in expectation for the function values and  an asymptotic a.s. convergence rate of the form $1/k$.
\begin{theorem}\label{th:FejerSStrong}
	Assume \ref{H_lip}, \ref{H_existence}, \ref{A_Pas}, \ref{A_PE}, \ref{A_stepwith2} and \ref{A_stepnotinell1}. Let $\seq{x_k}$ be a random sequence generated by Algorithm~\ref{eqn: gradient-free}. Then there is a random variable $x_*$ with values in $\argmin f$ such that
		\begin{equation}\label{convxk}
		x_k \toas x_*.
		\end{equation}
		Moreover, the sequence $(f_k)$ is a.s. non-increasing with $\lim_k f_k \eas f_*$ and the following convergence rate in expectation holds:
		\begin{equation*}
		\begin{split}
		\mathbb{E}\left[f_k -f_*\right] & \leq D_0/\sum_{j=0}^{k}\alpha_j,
		\end{split}
		\end{equation*}
			where the constant $D_0>0$ is provided in the proof.
	Finally, if \ref{A_stepbb} also holds, 
	\begin{equation}
	f_k - f_* \eas o(k^{-1}).
	\end{equation}
\end{theorem}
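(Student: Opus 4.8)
The plan is to start from the energy estimate of Lemma~\ref{lem:Fejexact} and combine it with two different lower bounds for $\langle \nabla f_k, x_k - x_* \rangle$. Cocoercivity of $\nabla f$ (Baillon--Haddad, Lemma~\ref{BaillHadd}) gives $\langle \nabla f_k, x_* - x_k \rangle \leq -\frac{1}{\lambda}\norm{\nabla f_k}{}^2$, which together with $\alpha_k \leq \aup < 2/\Lambda$ from \ref{A_stepwith2} turns Lemma~\ref{lem:Fejexact} into the Robbins--Siegmund-type inequality
\[
\mathbb{E}_k\norm{x_{k+1}-x_*}{}^2 \leqas \norm{x_k - x_*}{}^2 - c\,\alpha_k \norm{\nabla f_k}{}^2, \qquad c := \frac{2 - \aup\Lambda}{\lambda} > 0 .
\]
Convexity instead gives $\langle \nabla f_k, x_* - x_k \rangle \leq -(f_k - f_*)$ and hence $\mathbb{E}_k\norm{x_{k+1}-x_*}{}^2 \leqas \norm{x_k - x_*}{}^2 + \frac{\alpha_k^2 d}{\ell}\norm{\nabla f_k}{}^2 - 2\alpha_k(f_k - f_*)$. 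Alongside these I would use Remark~\ref{rem:h0} (Proposition~\ref{prop:as} in the limit $h\to0$): for recursion~\eqref{eqn: gradient-free} the sequence $\seq{f_k}$ is a.s.\ non-increasing, and this monotonicity is exactly what upgrades the conclusions from the best iterate to the last one.

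From the first inequality and Robbins--Siegmund (Lemma~\ref{PC}) I would deduce that $\norm{x_k - x_*}{}$ converges a.s.\ for every $x_*\in\argmin f$ and that $\seq{\alpha_k\norm{\nabla f_k}{}^2}\in\ell^1$ a.s. Since $\alpha_k\leq\aup$, the term $\frac{\alpha_k^2 d}{\ell}\norm{\nabla f_k}{}^2$ in the second inequality is then a.s.\ summable, so a second application of Robbins--Siegmund gives $\seq{\alpha_k(f_k-f_*)}\in\ell^1$ a.s.\ As $\seq{\alpha_k}\notin\ell^1$ by \ref{A_stepnotinell1}, this forces $\liminf_k(f_k-f_*)\eas 0$, and combined with the a.s.\ monotonicity of $\seq{f_k}$ we get $\lim_k f_k\eas f_*$. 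On the resulting full-measure event every cluster point $x_\infty$ of $\seq{x_k}$ satisfies $f(x_\infty)=f_*$ by continuity, hence $x_\infty\in\argmin f$; together with the a.s.\ convergence of $\norm{x_k-x_*}{}$ for each minimizer $x_*$, the stochastic Opial lemma (Lemma~\ref{StochOpial}) yields a random variable $x_*$ with values in $\argmin f$ and $x_k\toas x_*$.

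For the rate in expectation I would take total expectations in both inequalities and telescope. The cocoercivity one gives $c\sum_{j=0}^k\alpha_j\mathbb{E}\norm{\nabla f_j}{}^2\leq\norm{x_0-x_*}{}^2$, hence $\sum_{j\geq 0}\alpha_j^2\mathbb{E}\norm{\nabla f_j}{}^2\leq\frac{\aup}{c}\norm{x_0-x_*}{}^2<\infty$; substituting this into the telescoped convexity inequality gives $2\sum_{j=0}^k\alpha_j\mathbb{E}(f_j-f_*)\leq\norm{x_0-x_*}{}^2\big(1+\frac{d\aup}{\ell c}\big)=:2D_0$. Because $\seq{f_k}$ is a.s.\ non-increasing, $\mathbb{E}(f_k-f_*)\leq\mathbb{E}(f_j-f_*)$ for every $j\leq k$, so $\big(\sum_{j=0}^k\alpha_j\big)\mathbb{E}(f_k-f_*)\leq\sum_{j=0}^k\alpha_j\mathbb{E}(f_j-f_*)\leq D_0$, which is the claimed bound; integrability of the quantities involved is checked as in the remark referenced in the proof of Lemma~\ref{lem:Fejer}.

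Finally, under the extra assumption \ref{A_stepbb} we have $\alpha_k\geq\alow>0$, so $\alow\sum_k(f_k-f_*)\leq\sum_k\alpha_k(f_k-f_*)<\infty$ a.s., and on the full-measure event where $\seq{f_k-f_*}$ is in addition non-increasing I would invoke the elementary fact that a non-negative non-increasing summable sequence $\seq{a_k}$ satisfies $k\,a_k\to 0$ (since $\frac{k}{2}\,a_k\leq\sum_{j=\lceil k/2\rceil}^{k}a_j\to 0$), which gives $k(f_k-f_*)\to 0$, i.e.\ $f_k-f_*\eas o(k^{-1})$. None of the individual steps is hard; the points that require care are the systematic use of the a.s.\ monotonicity of $\seq{f_k}$ (which holds only for recursion~\eqref{eqn: gradient-free}, not for the general Algorithm~\ref{eqn: derivative-free}), making sure that the several almost-sure statements are intersected on a common full-measure event before applying Opial, and this last elementary lemma that converts summability plus monotonicity into the $o(1/k)$ rate.
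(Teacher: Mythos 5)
Your proposal is correct and follows essentially the same route as the paper's proof: the energy identity of Lemma~\ref{lem:Fejexact} bounded once via Baillon--Haddad and once via convexity, two applications of Robbins--Siegmund, the a.s.\ monotonicity from Remark~\ref{rem:h0} to pass from $\liminf$ to $\lim$ and from the best iterate to the last, stochastic Opial for convergence of the iterates, telescoping for the rate (your constant $D_0$ simplifies to the paper's $\norm{x_0-x_*}{}^2/(2-\Lambda\aup)$), and the summable-monotone lemma for the $o(k^{-1})$ claim. No gaps.
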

\begin{proof}
	We recall the equality from Lemma \ref{lem:Fejexact}: for every $x_*\in\argmin f$ and every $k\in \N$,
	\begin{equation}\label{repeat}
	\begin{split}
	\mathbb{E}\left[\norm{x_{k+1}-x_*}{}^2 \ \big{|}  \mathcal{F}_k\right] -\norm{x_{k}-x_*}{}^2 \eas \frac{\alpha_k^2 d}{\ell} \|\nabla f_k\|^2 +2\alpha_k \langle \nabla f_k, x_*-x_k \rangle .
	\end{split}
	\end{equation}
	By Baillon-Haddad Theorem \ref{BaillHadd}, $\nabla f$ is $1/\lambda$ co-coercive and so 
		\begin{align}\label{eq: ineq}
\nonumber
	\mathbb{E}\left[\norm{x_{k+1}-x_*}{}^2 \ \big{|}  \mathcal{F}_k\right] -\norm{x_{k}-x_*}{}^2 &\leqas \frac{\alpha_k^2 d}{\ell} \|\nabla f_k\|^2 -\frac{2\alpha_k }{\lambda}\|\nabla f_k\|^2\\
	(\ref{A_stepwith2}) \ &\leqas - \frac{2-\aup\Lambda }{\lambda} \ \alpha_k\|\nabla f_k\|^2.
	\end{align}
	Define $\xi_0:=2-\aup\Lambda/\lambda$, a strictly positive quantity. Then by Robbins-Siegmund Lemma \ref{PC}, for every $x_*\in\argmin f$ the random variable $\seq{\norm{x_{k}-x_*}{}}$ is a.s. convergent and that a.s. $\seq{\alpha_k\|\nabla f_k\|^2}\in\ell^1.$ Beginning again from the equality in Lemma \ref{lem:Fejexact}, we estimate the term $2\alpha_k \langle \nabla f_k, x_*-x_k \rangle$ using the convexity of $f$ and the gradient inequality: 
	\begin{equation*}
	f_k + \langle \nabla f_k, x_*-x_k \rangle \leq f_*.
	\end{equation*}
	Recalling that $\alpha_k\leq\aup$ by Assumption \ref{A_stepwith2}, it leads to
	\begin{equation}\label{Fejexact:ineq}
	\begin{split}
	\mathbb{E}\left[\norm{x_{k+1}-x_*}{}^2 \ \big{|}  \mathcal{F}_k\right] -\norm{x_{k}-x_*}{}^2 + 2\alpha_k \left(f_k-f_*\right)
	&\leqas \frac{ \aup d}{\ell} \alpha_k\|\nabla f_k\|^2 .
	\end{split}
	\end{equation}
Robbins-Siegmund Lemma \ref{PC} reveals that $\seq{\alpha_k (f_k-f_*)} \in \ell^1$ a.s. Since by assumption $\seq{\alpha_k}$ is not summable, $\liminf_k (f_k-f_*) \eas 0$.
	By Remark~\ref{rem:h0}, the sequence $(f_k)$ is a.s. non-increasing and bounded below by $f_*$. In particular, it is a.s. convergent and $ f_k \toas f_*.$
	Following the same reasoning as in the proof of Theorem \ref{th:FejerStrong}, there is a random variable $x_*$ with values in $\argmin f$ such that $x_k \overset{\mathrm{a.s.}}{\to} x_*$.
To obtain the convergence  rate first take the total expectation in inequality \eqref{eq: ineq} and sum from $j=0$ to $j=k$ to get
\begin{equation}\label{needed}
\begin{split}
\sum_{j=0}^k \alpha_j\mathbb{E}\|\nabla f_j\|^2& \leq \frac{1}{\xi_0}\sum_{j=0}^k\left(\mathbb{E}\norm{x_{j}-x_*}{}^2- \mathbb{E}\norm{x_{j+1}-x_*}{}^2\right)\\
& = \frac{1}{\xi_0}\left(\mathbb{E}\norm{x_{0}-x_*}{}^2- \mathbb{E}\norm{x_{k+1}-x_*}{}^2\right)\\
& \leq \frac{1}{\xi_0}\norm{x_{0}-x_*}{}^2.
\end{split}
\end{equation}
Summing \eqref{Fejexact:ineq} over $k$ and combining it with \eqref{needed}, an expectation yields
	\begin{equation*}
		\begin{split}
			\sum_{j=0}^{k}\alpha_j \ \mathbb{E}\left[f_j -f_*\right]	& \leq
			\frac{1}{2}\mathbb{E}\norm{x_{0}-x_*}{}^2-\frac{1}{2}\mathbb{E}\norm{x_{k+1}-x_*}{}^2+\frac{ \aup d}{2\ell} \sum_{j=0}^{k} \alpha_j\mathbb{E}\|\nabla f_j\|^2\\
			\eqref{needed} \ & \leq\frac{1}{2}\norm{x_{0}-x_*}{}^2+\frac{ \aup d}{2\ell\xi_0} \norm{x_{0}-x_*}{}^2\\
			& = \frac{ \ell\xi_0 + \aup d}{2\ell\xi_0} \norm{x_{0}-x_*}{}^2\\
			& = \frac{1}{2-\Lambda\aup} \norm{x_{0}-x_*}{}^2 =: D_0 < + \infty.
		\end{split}
	\end{equation*}
Since $\seq{\mathbb{E}f_j}$ is non-increasing, 
$\mathbb{E}[f_k-f_*]\sum_{j=0}^k \alpha_j  \leq 	\sum_{j=0}^{k}\alpha_j \ \mathbb{E}\left[f_j -f_*\right]$. 
Dividing by the sum over $\alpha_j$ yields the rate. 
Finally, assuming also  \ref{A_stepbb} and using the fact that $\seq{\alpha_k (f_k-f_*)} \in \ell^1$ a.s., we get $\seq{f_k-f_*} \in \ell^1$ a.s. Since $\seq{f_k-f_*}$ is also a.s. non-increasing, we conclude by Lemma \ref{smallorate} that $f_k-f_* \eas o(k^{-1})$.
\end{proof}
\begin{remark}
Note that the constant $D_0$ in Theorem \ref{th:FejerSStrong} indeed corresponds to $D$ in Theorem \ref{th:FejerWeak} when the discretization is set $h_k=0$ for every $k\in\N$.
\end{remark}

\begin{remark}
Under the same conditions of Remark \ref{rmk:Nest_Comp} but considering the analogue of recursion \eqref{eqn: gradient-free}, the following result is obtained in \cite[Theorem 8]{nesterov2017random}:
taking constant $\alpha_k=1/(4\lambda\left(d+4\right))$ and $h_k=h$, 
	\begin{equation}\label{eq:ourvsNest1bis}
	\begin{split}
	\mathbb{E}\left[f(\bar{x}_k)-f_{*}\right] & \leq 4\lambda\left(d+4\right)\norm{x_0-x_*}{}^2/(2k).
	\end{split}
	\end{equation}
    Under the same assumptions, consider the case of the sampling that we proposed (see Assumptions \ref{A_Pas} and \ref{A_PE}). From Theorem \ref{th:FejerSStrong}, we get that
	\begin{equation*}
	\mathbb{E}\left[f(\bar{x}_k) -f_*\right]  \ \leq \   \frac{\norm{x_{0}-x_*}{}^2}{2\left(1-\Lambda \alpha\right)\alpha k}.
	\end{equation*}
    For the case $\alpha=\ell/(2\lambda d)$ and $\ell=1$ we derive
	\begin{equation}\label{eq:ourvsNest2bis}
	\mathbb{E}\left[f(\bar{x}_k) -f_*\right]  \leq  2\lambda d \norm{x_{0}-x_*}{}^2/k 
	\end{equation}
	and the same observations of Remark \ref{rmk:Nest_Comp} hold.
\end{remark}
\section{Polyak-Łojasiewicz case}\label{Sect: PL}
In contrast with the case of a general convex $f$ considered thus far, this section assumes the PL inequality (\ref{H_PL}), but convexity (\ref{H_existence}) is not needed. Since strong convexity implies the PL inequality, all of the results in this section hold when the objective function is strongly convex. We use the PL inequality in the a.s. quasi-decreasing estimates of Section \ref{sec:as} to get the main estimate of Lemma \ref{lem:mainineq}. The application of  Lemma \ref{lem:mainineq} in different settings leads to the convergence rates in expectation for the function values given in Theorems \ref{th:PL_lin}, \ref{th:PL_alphaconst} (based on Lemma \ref{lem:aux}), \ref{th:Chung} (based on Lemma \ref{chung}) and \ref{th:linrate}.
These results are similar but intrinsically different. In Theorem \ref{th:PL_lin}, we study the case in which both sequences $\seq{h_k}$ and $\seq{\alpha_k}$ are bounded above, but not converging to zero; specifically, the error generated by the discretization does not vanish. In this context, we obtain a \emph{linear} rate in expectation not to the optimal value, but to a sublevel of the objective function depending on $\Cupb, \tlow$ and $\gamma$, see \eqref{error}. In Theorem \ref{th:PL_alphaconst}, for the case of $\alpha_k$ constant and vanishing $h_k$, we get \emph{sublinear} rates in expectation \emph{to the optimum}. In Theorem \ref{th:Chung} we obtain similar rates assuming that both $\seq{\alpha_k}$ and $\seq{h_k}$ converge to zero polynomially. Note that the algorithm does not converge to the optimal value if $h_k$ does not vanish, even with rapid decay of $\alpha_k$, a fact that may be surprising to readers more familiar with first-order stochastic approximation algorithms though, as discussed,  it is easy to see why. Finally, in Theorem \ref{th:linrate} we show linear convergence rates to the optimal value for a fast decay of $h_k$ .

\subsection{Main estimate}
The following basic estimate will be used repeatedly. 
\begin{lemma}\label{lem:mainineq}
Let $\seq{x_k}$ be generated by Algorithm~\ref{eqn: derivative-free}. Assume \ref{H_lip}, \ref{H_PL}, \ref{A_Pas}, \ref{A_PE} and \ref{A_stepwith2}. Then, for every $k\in\N$,
	\begin{equation*}
	\begin{split}
	\mathbb{E}\left[f_{k+1}-f_*\right] & \leq  \left(1-\frac{\w\alpha_k\gamma}{2}\right) \mathbb{E}\left[f_{k}-f_*\right]  + C \alpha_k h_k^2.
	\end{split}
	\end{equation*}
	Assuming also \ref{A_stepbb}  and defining $\eta:=1-\w\alow\gamma/2$, we get
	\begin{equation*}
	\begin{split}
		\mathbb{E}\left[f_{k}-f_*\right]
		& \leq \eta^k \left[\left(f_{0} -f_*\right)  + \frac{C}{\eta} \sum_{j=0}^{k-1}\frac{\alpha_j h_j^2}{\eta^j} \right].
	\end{split}
	\end{equation*}
\end{lemma}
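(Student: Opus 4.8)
The plan is to derive the one-step recursion first and then unroll it. For the first inequality, I would start from the quasi-descent estimate of Proposition~\ref{prop:as}, namely $f_{k+1}-f_k \leq -\tfrac{\w\alpha_k}{2}\|P_k^\top\nabla f_k\|^2 + C\alpha_k h_k^2$ a.s. Taking the conditional expectation given $\mathcal{F}_k$ and using \ref{A_PE} in the form $\mathbb{E}[\|P_k^\top\nabla f_k\|^2\mid\mathcal{F}_k]=\|\nabla f_k\|^2$ (which holds since $x_k$ is $\mathcal{F}_k$-measurable), I get $\mathbb{E}[f_{k+1}\mid\mathcal{F}_k]-f_k \leq -\tfrac{\w\alpha_k}{2}\|\nabla f_k\|^2 + C\alpha_k h_k^2$ a.s. Now invoke the PL inequality \ref{H_PL}: $\|\nabla f_k\|^2 \geq \gamma(f_k-f_*)$, so $\mathbb{E}[f_{k+1}\mid\mathcal{F}_k]-f_k \leq -\tfrac{\w\alpha_k\gamma}{2}(f_k-f_*)+C\alpha_k h_k^2$. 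Subtracting $f_*$ from both sides, rearranging, and taking total expectation yields
\begin{equation*}
\mathbb{E}[f_{k+1}-f_*] \leq \Bigl(1-\tfrac{\w\alpha_k\gamma}{2}\Bigr)\mathbb{E}[f_k-f_*] + C\alpha_k h_k^2,
\end{equation*}
which is the first claim. I should note that $1-\tfrac{\w\alpha_k\gamma}{2}\in(0,1)$ is not strictly needed for the inequality itself but will be used in the second part; it follows from $\w\leq 1$, $\alpha_k<2/\Lambda$ and the standard fact $\gamma\leq\lambda\leq\Lambda$ for PL functions with $\lambda$-Lipschitz gradient.

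For the second inequality, I would assume \ref{A_stepbb}, so $\alpha_k\geq\alow>0$, and hence $1-\tfrac{\w\alpha_k\gamma}{2}\leq 1-\tfrac{\w\alow\gamma}{2}=\eta<1$. Writing $a_k:=\mathbb{E}[f_k-f_*]\geq 0$, the first part gives $a_{k+1}\leq \eta\, a_k + C\alpha_k h_k^2$. This is a linear recursion with a forcing term, and unrolling it is routine: by induction on $k$,
\begin{equation*}
a_k \leq \eta^k a_0 + C\sum_{j=0}^{k-1}\eta^{k-1-j}\alpha_j h_j^2 = \eta^k\Bigl[a_0 + \frac{C}{\eta}\sum_{j=0}^{k-1}\frac{\alpha_j h_j^2}{\eta^j}\Bigr],
\end{equation*}
which is exactly the stated bound. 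The induction step just uses $a_{k+1}\leq\eta a_k + C\alpha_k h_k^2 \leq \eta^{k+1}a_0 + C\eta\sum_{j=0}^{k-1}\eta^{k-1-j}\alpha_j h_j^2 + C\alpha_k h_k^2$ and reindexing.

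There is no serious obstacle here; the lemma is essentially a clean combination of three ingredients already in hand. The only point requiring a little care is the passage from the almost-sure bound of Proposition~\ref{prop:as} to a bound in conditional expectation: one must check that $\|P_k^\top\nabla f_k\|^2$ has conditional expectation $\|\nabla f_k\|^2$, which is where \ref{A_PE} and the independence of $P_k$ from $\mathcal{F}_k$ (equivalently, $x_k$ being $\mathcal{F}_k$-measurable) enter, together with an integrability remark analogous to Remark~\ref{lemma: integrabilityI}. Everything else — applying PL, and unrolling the scalar recursion — is elementary. If anything is slightly delicate it is bookkeeping the constant $C$ through the recursion and verifying the $\eta$-in-the-denominator normalization in the final display, but that is mechanical.
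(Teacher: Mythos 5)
Your proposal is correct and follows essentially the same route as the paper: take conditional expectation in Proposition~\ref{prop:as}, use \ref{A_PE} to replace $\mathbb{E}[\|P_k^\top\nabla f_k\|^2\mid\mathcal{F}_k]$ by $\|\nabla f_k\|^2$, apply the PL inequality, take total expectation, and then unroll the resulting linear recursion using $1-\w\alpha_k\gamma/2\leq\eta$ under \ref{A_stepbb}. No gaps.
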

\begin{proof}
Taking the conditional expectation given $\mathcal{F}_k$ in the a.s. inequality of Proposition \ref{prop:as}, we have
	\begin{equation*}
	\begin{split}
	\mathbb{E}\left[f_{k+1} \ \big{|}  \mathcal{F}_k\right]-f_k & \leq  -\frac{\w\alpha_k}{2} \mathbb{E}\left[\| P_k^\top\nabla f_k \|^2 \ \big{|}  \mathcal{F}_k\right]  + C \alpha_k h_k^2\\
	(\ref{A_PE}) \ & =  -\frac{\w\alpha_k}{2} \norm{\nabla f_k}{}^2  + C \alpha_k h_k^2\\
	(\ref{H_PL})  \ & \leq  -\frac{\w\alpha_k}{2} \gamma \left(f_k-f_*\right)  + C \alpha_k h_k^2.
	\end{split}
	\end{equation*}
	An expectation yields the first claim. Under Assumption \ref{A_stepbb}, the last inequality yields
    \begin{equation*}
	\begin{split}
	\mathbb{E}\left[f_{k+1}-f_*\right] & \leq  \left(1-\frac{\w\alow\gamma}{2}\right) \mathbb{E}\left[f_{k}-f_*\right]  + C \alpha_k h_k^2.
	\end{split}
	\end{equation*}
	Iterating leads to the second claim:
	\begin{equation*}
	\begin{split}
		\mathbb{E}\left[f_{k}-f_*\right]
		& \leq  \eta^k\left(f_{0} -f_*\right)\\
		&+ C \left[ \alpha_{k-1}h_{k-1}^2+\eta \alpha_{k-2}h_{k-2}^2+\eta^2 \alpha_{k-3}h_{k-3}^2+...+\eta^{k-1}\alpha_0 h_{0}^2\right]\\
	& =  \eta^k \left(f_{0} -f_*\right)  + C \sum_{j=0}^{k-1}\eta^{k-1-j}\alpha_j h_j^2\\
	& =  \eta^k \left[\left(f_{0} -f_*\right)  + \frac{C}{\eta} \sum_{j=0}^{k-1}\frac{\alpha_j h_j^2}{\eta^j} \right].
	\end{split}
	\end{equation*}
\end{proof}
\subsection{Linear quasi-rate} 
We first provide an extended version of Theorem~\ref{Th:main2}$(i')$. In this case both $\seq{\alpha_k}$ and $\seq{h_k}$ are bounded above but not vanishing, leading to the following result as a direct consequence of Lemma \ref{lem:mainineq}. In particular, the following bound suggests to stop iterating when $(f_k - f_*) \leq C \aup\hup^2/(1-\eta)$.
\begin{theorem}\label{th:PL_lin}
	Let $\seq{x_k}$ be generated by Algorithm~\ref{eqn: derivative-free}. Assume \ref{H_lip}, \ref{H_PL}, \ref{A_Pas}, \ref{A_PE}, \ref{A_stepwith2} and \ref{A_stepbb}. Then, for every $k\in\N$,
	\begin{equation*}
	\begin{split}
	\mathbb{E}\left[f_{k}-f_*\right] & \leq \eta^k \left(f_0-f_*\right)  + \frac{C \aup\hup^2}{1-\eta}\left[1-\eta^k\right],
	\end{split}
	\end{equation*}
	where the constant $C$ is defined in Proposition \ref{prop:as} and $\eta=1-\w\alow\gamma/2$.
\end{theorem}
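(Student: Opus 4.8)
The plan is to read off the statement as an immediate consequence of the second inequality of Lemma~\ref{lem:mainineq}, so that essentially no new work is needed. Recall that, under \ref{H_lip}, \ref{H_PL}, \ref{A_Pas}, \ref{A_PE}, \ref{A_stepwith2} and \ref{A_stepbb}, that lemma already gives, for every $k\in\N$,
\[
\mathbb{E}\left[f_k-f_*\right]\;\leq\;\eta^k\left[\left(f_0-f_*\right)+\frac{C}{\eta}\sum_{j=0}^{k-1}\frac{\alpha_j h_j^2}{\eta^j}\right],
\]
with $\eta=1-\w\alow\gamma/2$ and $C$ the constant from Proposition~\ref{prop:as}; note that $\eta\in(0,1)$ because $0<\w\leq 1$ and $\alow\leq\aup<2/\Lambda$. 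The only remaining task is therefore to control the weighted sum $\sum_{j=0}^{k-1}\alpha_j h_j^2\,\eta^{-j}$ using that the step and discretization sequences are bounded.

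First I would use \ref{A_stepwith2} and \ref{A_stepbb} to bound each term by the uniform quantity $\alpha_j h_j^2\leq\aup\hup^2$, reducing the sum to the geometric series $\aup\hup^2\sum_{j=0}^{k-1}\eta^{-j}$. Then, since $0<\eta<1$, I would evaluate $\sum_{j=0}^{k-1}\eta^{-j}=(\eta^{-k}-1)/(\eta^{-1}-1)$ and simplify via the identity $\eta(\eta^{-1}-1)=1-\eta$, obtaining $\frac{C}{\eta}\sum_{j=0}^{k-1}\alpha_j h_j^2\,\eta^{-j}\leq C\aup\hup^2(\eta^{-k}-1)/(1-\eta)$. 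Substituting this into the displayed inequality and distributing the factor $\eta^k$ collapses $\eta^k(\eta^{-k}-1)$ to $1-\eta^k$, which is exactly the asserted bound $\eta^k(f_0-f_*)+\frac{C\aup\hup^2}{1-\eta}\left[1-\eta^k\right]$.

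I do not expect any genuine obstacle here: the whole argument is a one-line geometric-series computation on top of Lemma~\ref{lem:mainineq}, and the only point deserving a word of care is recording $\eta\in(0,1)$, which guarantees $\eta^{-j}>0$ for all $j$ (so that the termwise bound $\alpha_j h_j^2\eta^{-j}\leq\aup\hup^2\eta^{-j}$ is legitimate) and that the geometric sum behaves as expected. For completeness I would also note the cruder uniform estimate $\mathbb{E}[f_k-f_*]\leq\eta^k(f_0-f_*)+C\aup\hup^2/(1-\eta)$, obtained by discarding $-\eta^k\leq 0$, and observe that as $k\to\infty$ the right-hand side converges linearly to $C\aup\hup^2/(1-\eta)$, i.e.\ the iterates enter the error-dominated sublevel set $\{\,f\leq f_*+C\aup\hup^2/(1-\eta)\,\}$ at a linear rate, which matches the stopping rule stated just before the theorem.
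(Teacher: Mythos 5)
Your proposal is correct and is precisely the argument the paper intends: the theorem is stated as a direct consequence of the second inequality in Lemma~\ref{lem:mainineq}, and your termwise bound $\alpha_j h_j^2\leq\aup\hup^2$ followed by the geometric-series evaluation is exactly the computation that turns that inequality into the stated bound. The only cosmetic caveat is that $\eta>0$ actually also uses $\gamma\leq 2\lambda$ (a consequence of \ref{H_lip} and \ref{H_PL}), not just the stated bounds on $\w$ and $\alow$, but this does not affect the validity of the derivation.
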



\begin{remark}
For recursion~\ref{eqn: gradient-free} we recover the linear rate  proved in \cite{kozak2019stochastic}; namely,
\begin{equation*}
	\begin{split}
		\mathbb{E}\left[f_{k}-f_*\right] & \leq \left(1-\frac{\w\alow\gamma}{2}\right)^k \left(f_0-f_*\right).
	\end{split}
\end{equation*}
On the other hand, for  $0\leq h_k \leq \hup$, the cumulative error term does not vanish:
\begin{equation}\label{error}
    \lim_{k} \ \frac{C \aup\hup^2}{1-\eta}\left[1-\eta^k\right] =\frac{2C \aup\hup^2}{\w \alow\gamma} = \frac{\ell \Lambda^2 \aup \hup^2}{4\w\alow \gamma \min\left(1, 2-\Lambda\aup-\w\right)},
\end{equation}
where we recall that $0<\w<2-\Lambda \aup$ \  and $ \w\leq 1$. Finally, note that for  $\alpha_k=1/\Lambda$, by Remark \ref{rem:simple} we have that the decreasing rate is $1-\gamma/(2\Lambda)$.
\end{remark}

\subsection{Sublinear rates} 
We now state the results obtained with bounded step-size and vanishing discretization. Assuming only that $\seq{h_k}$ converges to zero, the objective function values converge in expectation to the optimum; while, for a polynomial decay of $h_k$, we get sublinear convergence rates. The following is an extended version of Theorem~\ref{Th:main2}  $(ii')$.  
\begin{theorem}\label{th:PL_alphaconst}
		Let $\seq{x_k}$ be generated by Algorithm~\ref{eqn: derivative-free}. Assume \ref{H_lip}, \ref{H_PL}, \ref{A_Pas}, \ref{A_PE}, \ref{A_stepwith2} and \ref{A_stepbb}. If $\seq{h_k}$ converges to zero, then
	\begin{equation*}
	\begin{split}
	\lim_{k} \mathbb{E} f_{k}=f_*.
	\end{split}
	\end{equation*}
	Moreover, if $h_k=h/k^r$ for some $r>0$ and $h>0$, then
	\begin{equation*}
	    \limsup_{k} \  k^{2r}\mathbb{E}\left[f_{k}-f_*\right]\leq \frac{2C \aup h^2}{\w \alow \gamma},
	\end{equation*}
    In particular, there is a constant $\tilde{C}>0$ such that 
	$$\mathbb{E}\left[f_{k}-f_*\right]\leq \tilde{C}/k^{2r}$$
	and, for every $t\in\left(0,r\right)$,
	$$\mathbb{E}\left[f_{k}-f_*\right]=o\left(\frac{1}{k^{2t}}\right).$$
\end{theorem}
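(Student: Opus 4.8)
The plan is to reduce both assertions to the scalar recursion already supplied by Lemma~\ref{lem:mainineq}, whose hypotheses (\ref{H_lip}, \ref{H_PL}, \ref{A_Pas}, \ref{A_PE}, \ref{A_stepwith2}, \ref{A_stepbb}) are exactly those in force here. Write $a_k := \mathbb{E}[f_k - f_*]$, which is nonnegative because $f_* = \inf f$. Lemma~\ref{lem:mainineq} gives $a_{k+1} \le (1 - \w\alpha_k\gamma/2)\,a_k + C\alpha_k h_k^2$; since $\alow \le \alpha_k \le \aup$ by \ref{A_stepbb}, since $a_k \ge 0$, and since $\eta := 1 - \w\alow\gamma/2 \in (0,1)$ (from $\w < 2 - \Lambda\aup$ together with $\gamma \le 2\Lambda$ one checks $\w\alow\gamma < 2$), this collapses to the clean one-step bound $a_{k+1} \le \eta\,a_k + C\aup h_k^2$ valid for every $k$. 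Unrolling, $a_k \le \eta^k a_0 + C\aup\,\Sigma_k$ with $\Sigma_k := \sum_{j=0}^{k-1}\eta^{k-1-j}h_j^2$, so everything comes down to controlling the convolution sum $\Sigma_k$ — essentially the content of the auxiliary recursion lemma (Lemma~\ref{lem:aux}), which I would either cite or reprove as below.

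For the first assertion, assume only $h_k \to 0$. Given $\varepsilon > 0$, pick $N$ with $h_j^2 \le \varepsilon$ for all $j \ge N$ and split $\Sigma_k$ at $N$: the tail satisfies $\sum_{j=N}^{k-1}\eta^{k-1-j}h_j^2 \le \varepsilon/(1-\eta)$, while the head satisfies $\sum_{j=0}^{N-1}\eta^{k-1-j}h_j^2 \le \eta^{\,k-N}\sum_{j=0}^{N-1}h_j^2 \to 0$ as $k \to \infty$. Together with $\eta^k a_0 \to 0$ this gives $\limsup_k a_k \le C\aup\varepsilon/(1-\eta)$; letting $\varepsilon \downarrow 0$ and using $a_k \ge 0$ yields $a_k \to 0$, i.e.\ $\lim_k \mathbb{E} f_k = f_*$.

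For the rate, take $h_k = h/k^r$ and unroll from $k = 1$ (so every $h_j$ entering the sums is finite). Fix $\delta \in (0,1)$ and split $\Sigma_k$ at $m_k := \lceil (1-\delta)k\rceil$. On the ``recent'' block $j \ge m_k$ one has $h_j^2 \le h^2/((1-\delta)k)^{2r}$, hence $\sum_{j \ge m_k}\eta^{k-1-j}h_j^2 \le \frac{1}{1-\eta}\cdot\frac{h^2}{(1-\delta)^{2r}\,k^{2r}}$. On the ``old'' block $1 \le j < m_k$ one has $h_j^2 \le h^2$ and $\eta^{k-1-j} \le \eta^{\,\delta k - 1}$, so $\sum_{j<m_k}\eta^{k-1-j}h_j^2 \le \frac{h^2\,\eta^{\,\delta k - 1}}{1-\eta}$, which is $o(k^{-2r})$ because exponential decay beats any polynomial; likewise $\eta^{k-1}a_1 = o(k^{-2r})$. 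Multiplying by $k^{2r}$ and taking $\limsup_k$ gives $\limsup_k k^{2r}a_k \le \frac{C\aup h^2}{(1-\delta)^{2r}(1-\eta)}$ for every $\delta \in (0,1)$; letting $\delta \downarrow 0$ and using $1-\eta = \w\alow\gamma/2$ yields $\limsup_k k^{2r}a_k \le \frac{2C\aup h^2}{\w\alow\gamma}$.

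The two remaining claims are then immediate: a real sequence with finite $\limsup$ is bounded, so $(k^{2r}a_k)$ is bounded, say by $\tilde C$, whence $\mathbb{E}[f_k - f_*] \le \tilde C/k^{2r}$; and for $t \in (0,r)$ one has $k^{2t}a_k \le \tilde C\,k^{2t-2r} \to 0$, i.e.\ $\mathbb{E}[f_k - f_*] = o(k^{-2t})$. The only genuinely delicate point is the convolution estimate — showing that the earliest iterates, where $h_j$ is largest, contribute negligibly against $h_k^2$ — and the two-scale split above (encapsulated by Lemma~\ref{lem:aux}) is exactly what handles it; everything else is bookkeeping.
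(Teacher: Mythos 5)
Your proposal is correct, and it starts exactly where the paper does: Lemma~\ref{lem:mainineq} plus \ref{A_stepwith2}--\ref{A_stepbb} give the one-step bound $a_{k+1}\le \eta\,a_k + C\aup h_k^2$ with $\eta=1-\w\alow\gamma/2$. Where you diverge is in how the scalar recursion is then dispatched: the paper simply invokes Lemma~\ref{lem:aux} (whose first part runs a boundedness-plus-$\limsup$ fixed-point argument on the recursion itself, and whose second part compares $a_k$ against the explicit sequence $c/(\eta k^t)$ via a telescoping inequality), whereas you unroll the recursion and control the convolution $\sum_{j}\eta^{k-1-j}h_j^2$ directly by a two-scale split (old block killed by the exponential weight, recent block estimated by $h^2/((1-\delta)k)^{2r}$ times the geometric tail $1/(1-\eta)$, then $\delta\downarrow 0$). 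Both routes land on the same constant $2C\aup h^2/(\w\alow\gamma)$, and your parenthetical check that $\eta\in(0,1)$ (via $\gamma\le 2\Lambda$ and $(2-\Lambda\aup)\Lambda\aup\le 1$) is a detail the paper leaves implicit. Your version is longer but self-contained, and it has one concrete advantage: the printed proof of the rate part of Lemma~\ref{lem:aux} hinges on the inequality $\tfrac{c}{k^t}\le\tfrac{c}{\eta}\bigl[\tfrac{1}{(k+1)^t}-(1-\eta)\tfrac{1}{k^t}\bigr]$, which after clearing $c/\eta$ reduces to $k^{-t}\le (k+1)^{-t}$ and therefore points the wrong way; the lemma's conclusion is nonetheless true, and your convolution estimate is precisely a sound replacement for that step. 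The final ``in particular'' deductions (boundedness of $k^{2r}a_k$ from a finite $\limsup$, hence the $\tilde C/k^{2r}$ bound and the $o(k^{-2t})$ claim for $t<r$) are the same trivial bookkeeping in both treatments.
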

\begin{proof}
	 From  Lemma \ref{lem:mainineq}, we get
	\begin{equation*}
	\begin{split}
	\mathbb{E}\left[f_{k+1}-f_*\right] & \leq  \eta \mathbb{E}\left[f_{k}-f_*\right]  + C \aup \ h_{k}^2.
	\end{split}
	\end{equation*}
	The remainder follows from Lemma \ref{lem:aux} with $c_k=C \aup h^2_k$, $c=C \aup h^2$ and $t=2r$.
\end{proof}

In the next result we allow both the step-size and the discretization to converge to zero polynomially. In this case, we get again sublinear rates in expectation similar to the ones obtained in Theorem \ref{th:PL_alphaconst}.
\begin{theorem}\label{th:Chung}
	Let $\seq{x_k}$ be generated by Algorithm~\ref{eqn: derivative-free}. Assume \ref{H_lip}, \ref{H_PL}, \ref{A_Pas} and \ref{A_PE}. For $0<\alpha<2/\Lambda$ and $h>0$, set $\alpha_k= \alpha/k^s$, $h_k=h/k^r$ and define
	\begin{eqnarray}\label{cc}
	c  :=& \w \alpha \gamma /2 \quad \text{and} \quad d:=& C \alpha h^2.
\end{eqnarray}
Then, for $s=1$ and $r>0$, we get
	\begin{equation}
\mathbb{E}\left[f_{k}-f_*\right]\leq 
\begin{cases}
\frac{d}{\left(c-2r\right)k^{2r}}+o\left(\frac{1}{k^{2r}}\right) \ \ \ \ \ & \text{if} \ \ 2r<c; \\
 
O\left(\frac{\log k}{k^{c}} \right)\ \ \ \ \ & \text{if} \ \ 2r = c ;\\
 
O\left(\frac{1}{k^{c}}\right) \ \ \ \ \ & \text{if} \ \ 2r>c.
\end{cases}
\end{equation}
If $0<s<1$, for every $r>0$, we have
$$\mathbb{E}\left[f_{k}-f_*\right]\leq \frac{d}{c} \frac{1}{k^{2r}}+o\left(\frac{1}{k^{2r}}\right).$$
\end{theorem}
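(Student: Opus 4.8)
The plan is to reduce Theorem~\ref{th:Chung} to the recursion
\[
a_{k+1} \leq \left(1 - \frac{c}{k^s}\right) a_k + \frac{d}{k^{s+2r}},
\]
where $a_k := \mathbb{E}[f_k - f_*]$, and then invoke a classical Chung-type lemma on scalar recursions (presumably stated as Lemma~\ref{chung} later in the paper). First I would apply Lemma~\ref{lem:mainineq}, which under \ref{H_lip}, \ref{H_PL}, \ref{A_Pas}, \ref{A_PE} and \ref{A_stepwith2} gives
\[
\mathbb{E}[f_{k+1} - f_*] \leq \left(1 - \frac{\w\alpha_k\gamma}{2}\right)\mathbb{E}[f_k - f_*] + C\alpha_k h_k^2.
\]
Substituting $\alpha_k = \alpha/k^s$ and $h_k = h/k^r$ turns the contraction factor into $1 - c/k^s$ with $c = \w\alpha\gamma/2$, and the additive term into $C\alpha h^2/k^{s+2r} = d/k^{s+2r}$. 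I should check at the outset that \ref{A_stepwith2} is satisfiable for all $k$: since $\alpha_k = \alpha/k^s \leq \alpha < 2/\Lambda$, this holds, and one also needs $0 < \w < 2 - \Lambda\alpha_k$ for every $k$ — it suffices to fix $\w \leq 1$ with $\w < 2 - \Lambda\alpha$, which then works uniformly because $\alpha_k \leq \alpha$.

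Next I would split into the two stated regimes. For $s = 1$ the recursion is $a_{k+1} \leq (1 - c/k)a_k + d/k^{1+2r}$, and the asymptotics of such a recursion are governed by the comparison between the ``effective decay'' $c$ coming from the product $\prod_j (1 - c/j) \sim k^{-c}$ and the polynomial rate $2r$ of the forcing term; this is exactly the trichotomy in Chung's lemma: if $2r < c$ the forcing term dominates and $a_k \sim \frac{d}{(c - 2r)k^{2r}}$; if $2r = c$ one picks up a logarithmic factor $O(k^{-c}\log k)$; and if $2r > c$ the decay of the homogeneous part is the bottleneck, giving $O(k^{-c})$. For $0 < s < 1$ the product $\prod_j(1 - c/j^s)$ decays faster than any polynomial (it is $\exp(-\Theta(k^{1-s}))$), so the homogeneous part is negligible and the forcing term $d/k^{s+2r}$, summed against the (slowly varying) weights, yields $a_k \leq \frac{d}{c}\cdot\frac{1}{k^{2r}} + o(k^{-2r})$ — here the $\frac{d}{c}$ factor comes from the standard heuristic that $\sum_{j\leq k} \frac{1}{j^s}\cdot\frac{1}{j^{s+2r}}\big/\frac{1}{k^{2r}}\to \frac{1}{c}\cdot c = $ (after accounting for the geometric-like accumulation) the leading constant $d/c$.

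The main obstacle is not any single inequality but making the Chung-lemma bookkeeping rigorous: one must track the partial products $\prod_{j=k_0}^{k-1}(1 - c/j^s)$ and the sums $\sum_{j} \big(\prod_{i=j+1}^{k-1}(1 - c/i^s)\big)\frac{d}{j^{s+2r}}$, using $\log(1 - c/j^s) = -c/j^s + O(j^{-2s})$ and Euler–Maclaurin/integral comparison to extract the leading term. In practice, though, I would avoid redoing this by citing Lemma~\ref{chung} directly: identify $c$ and $d$ as in \eqref{cc}, note that $a_k = \mathbb{E}[f_k - f_*] \geq 0$ satisfies the hypotheses of that lemma with forcing exponent $s + 2r$, and read off the four cases. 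So the proof is essentially: (1) verify the parameter constraints so \ref{A_stepwith2} holds for all $k$; (2) instantiate Lemma~\ref{lem:mainineq}; (3) substitute the polynomial choices of $\alpha_k, h_k$ to obtain the scalar recursion; (4) apply Lemma~\ref{chung} in the regimes $s = 1$ and $0 < s < 1$. The only genuinely delicate point is confirming that the constant in front of $k^{-2r}$ in the $2r < c$ case is exactly $d/(c - 2r)$ (respectively $d/c$ when $s<1$), which is precisely what the cited lemma is designed to deliver.
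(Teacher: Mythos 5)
Your proposal is correct and follows essentially the same route as the paper: instantiate Lemma~\ref{lem:mainineq} with $\alpha_k=\alpha/k^s$, $h_k=h/k^r$ to obtain the scalar recursion, then apply the first part of Chung's Lemma~\ref{chung} with $p=2r$ when $s=1$ and the second part with $t=s+2r$ when $0<s<1$. Your additional check that \ref{A_stepwith2} holds uniformly in $k$ is a sensible detail the paper leaves implicit.
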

\begin{remark}
In order for the previous results to hold, both $\seq{\alpha_k}$ and $\seq{h_k}$ must converge to zero. In the case of $s=1$ (and so $\alpha_k$ proportional to $1/k$), as the intuition suggests, the rate improves for larger $r$ (and thus for $h_k$ vanishing faster) up to the value $c/2$. But eventually a saturation effect occurs: increasing $r$ beyond $c/2$ does not improve the bound. On the other hand, for $\alpha_k=\alpha/k^s$ with $0<s<1$, the convergence rates improve with larger values of $r$ similar to the ones in Theorem \ref{th:PL_alphaconst}. 
\end{remark}
\begin{remark}
Note the difference between the results of this section. In Theorem \ref{th:PL_lin}, for non-vanishing $h_k$, we obtain a linear rate \emph{with an  error}; while in Theorems \ref{th:PL_alphaconst} and \ref{th:Chung}, with $h_k$ going to zero, the convergence rates are sublinear but \emph{to the optimum}.
\end{remark}
\begin{proof}
	From Lemma \ref{lem:mainineq},
	\begin{equation*}
	\begin{split}
	\mathbb{E}\left[f_{k+1}-f_*\right] & \leq  \left(1-\frac{\w\alpha_k\gamma}{2}\right) \mathbb{E}\left[f_{k}-f_*\right]  + C \alpha_k h_k^2.
	\end{split}
	\end{equation*}
    To conclude the first result, apply the first part of Lemma \ref{chung} with $p=2r$ and $c,\ d$ as in \eqref{cc}. For the second result, apply the second part of Lemma \ref{chung} with the same $c,\ d$ as before and
$
	s = s \quad \text{and} \quad t = s+2r.
$
\end{proof}
\subsection{Linear rate}
Finally, assuming a fast decay of $h_k$, we derive linear convergence rates in expectation. The following result is an extended version of Theorem~\ref{Th:main2}$(iii')$. The proof is  a simple consequence of Lemma \ref{lem:mainineq}.
\begin{theorem}\label{th:linrate}
    Let $\seq{x_k}$ be generated by Algorithm~\ref{eqn: derivative-free}. Assume \ref{H_lip}, \ref{H_PL}, \ref{A_Pas}, \ref{A_PE}, \ref{A_stepwith2} and \ref{A_stepbb}. Assume that $\seq{h_k^2/\eta_k}\in \ell^1$. Then, for every $k\in\N$,
		\begin{equation*}
	\begin{split}
		\mathbb{E}\left[f_{k}-f_*\right]
		& \leq \eta^k \left[\left(f_{0} -f_*\right)  + \frac{C\aup}{\eta} \sum_{j=0}^{+\infty}\frac{h_j^2}{\eta^j} \right],
	\end{split}
	\end{equation*}
	where the constant $C$ is defined in Proposition \ref{prop:as} and $\eta=1-\w\alow\gamma/2$.

\end{theorem}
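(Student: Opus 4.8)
The plan is to read the result directly off the iterated estimate in Lemma~\ref{lem:mainineq}. The hypotheses of Theorem~\ref{th:linrate} are exactly \ref{H_lip}, \ref{H_PL}, \ref{A_Pas}, \ref{A_PE}, \ref{A_stepwith2} and \ref{A_stepbb}, which is precisely what the second part of Lemma~\ref{lem:mainineq} needs; applying it gives, for every $k\in\N$,
\[
\mathbb{E}\left[f_{k}-f_*\right]\ \le\ \eta^k\Bigl[(f_{0}-f_*)+\frac{C}{\eta}\sum_{j=0}^{k-1}\frac{\alpha_j h_j^2}{\eta^j}\Bigr],
\]
with $\eta=1-\w\alow\gamma/2$ and $C$ the constant of Proposition~\ref{prop:as}.

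The only remaining step is to bound the partial sum uniformly in $k$. Since $\alpha_j\le\aup$ by \ref{A_stepwith2} and every term of the series is nonnegative, monotone passage to the limit gives
\[
\sum_{j=0}^{k-1}\frac{\alpha_j h_j^2}{\eta^j}\ \le\ \aup\sum_{j=0}^{k-1}\frac{h_j^2}{\eta^j}\ \le\ \aup\sum_{j=0}^{+\infty}\frac{h_j^2}{\eta^j},
\]
and the standing assumption $\seq{h_k^2/\eta^k}\in\ell^1$ is exactly what makes this last series a finite constant. Substituting into the previous display yields the asserted bound. Since $\eta<1$ (indeed $|\eta|<1$ under the stated constraints on $\w$ and $\alow$, using $\gamma\le 2\lambda$ and $\alow\le\aup<2/\Lambda$), the prefactor $\eta^k$ is a genuine geometric decay, so one recovers the same linear rate $\eta^k$ as for the exact recursion~\eqref{eqn: gradient-free} in Theorem~\ref{Th:main2}$(iv')$, at the price of a larger constant coming from the accumulated discretization error.

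There is essentially no obstacle: the content of the theorem is just the observation that an exponentially fast decay of $h_k$ — e.g.\ $h_k=\sqrt{\eta^k/k^r}$ with $r>1$, so that $h_k^2/\eta^k=1/k^r$ — makes the cumulative error $\tfrac{C}{\eta}\sum_j\alpha_j h_j^2/\eta^j$ summable, so that the geometric factor $\eta^k$ controls the asymptotics. The main point worth a line in the write-up is verifying the interchange of limit and series (all terms nonnegative) and recording that $\eta\in(0,1)$, so the estimate is a genuine linear convergence rate to the optimum; everything else is a one-line substitution.
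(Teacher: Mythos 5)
Your proof is correct and is exactly the paper's argument: the authors state that Theorem \ref{th:linrate} is a direct consequence of the second part of Lemma \ref{lem:mainineq}, obtained by bounding $\alpha_j\leq\aup$ and majorizing the partial sum by the full series, which is finite by the hypothesis $\seq{h_k^2/\eta^k}\in\ell^1$. Your additional remarks on the sign of $\eta$ and the monotone passage to the limit are sensible but not part of the paper's (essentially one-line) proof.
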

\section{Numerical results}\label{sec:num}
In this section we present synthetic examples  illustrating the different results we derived and discussed. Our analysis unifies many algorithms that have been thoroughly empirically examined, e.g., \cite{pmlr-v80-choromanski18a,berahas2019theoretical,Kozak2021Stochastic}. Hence, we present only toy problems to illustrate our theoretical results.
 We omit the convex, non-PL case as we were unable to identify a function that resulted in a qualitative difference in performance of the algorithm for any $\ell \in \{1, \ldots, d\}$.\\
{\bf Convex function satisfying PL inequality.}
Many convex functions satisfy the PL inequality leading to an improved rate of convergence. An example of this case is $f(x) = \norm{Ax}^2$ where $A \in \reals^{n\times d}$ is fixed but not necessarily full column rank and $\lambda = 100$. Specifically, if any eigenvalue of $A$ is 0 then $f$ is not strongly convex, however because it is PL we are still able to apply Theorem \ref{Th:main2}. We choose $n=d=100$, and we force at least one eigenvalue of $A$ to be 0. For the cases $\ell < d$ we take the average of 10 runs. While in the long run the discrete gradient method catches up to the subspace approaches, it is important to recognize that for many practical problems of interest the dimension of the objective function may be very high relative to the budget for function evaluations (this budget could be due to time, money, computational power, etc). In very high-dimensional cases,  it may not even be possible to perform a single iteration of gradient descent, due to the $d+1$ function evaluations required at each iteration; requiring only $\ell$ function evaluations per iteration, may allow for substantial progress with identical budget constraints. This effect is even more apparent with the rapid initial progress made with  $\ell < d$  in the left panel of Figure \ref{fig:convexpl}.\\
\begin{figure}[t]
    \centering
    \includegraphics[width=.32\linewidth]{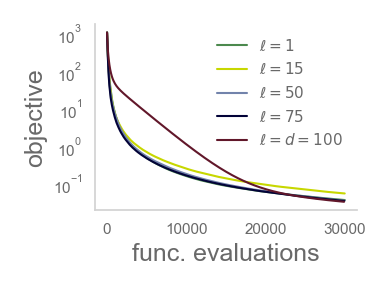}
    \includegraphics[width=.32\linewidth]{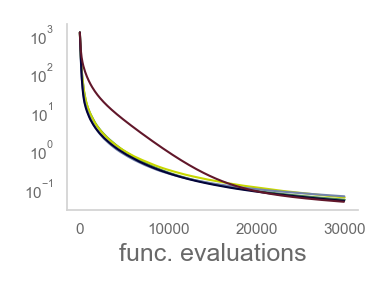}
    \includegraphics[width=.32\linewidth]{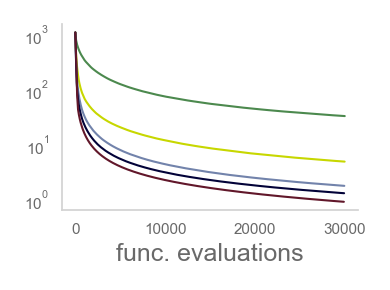}
    \caption{Optimization of a convex objective function satisfying the PL inequality. \textbf{left:} $\alpha = \ell/(d\lambda)$, $h=10^{-7}$. \textbf{center:} $\alpha = \ell/(d\lambda)$, $h=10^{-7}/k^{0.0001}$. \textbf{right:} $\alpha = \ell/(d\lambda\sqrt{k})$, $h=10^{-7}/k^{0.0001}$. Note the different axes between the figures.}
    \label{fig:convexpl}
\end{figure}
{\bf Non-convex function satisfying PL inequality.} $f(x) = \norm{Ax}^2 + 3\sin^2(c^\top x)$, with $A$ fixed but not necessarily full rank and $Ac = c$. Again,  let $n = d =100$ and $\lambda = 100$. For the cases $\ell < d$ we take the average of 10 runs.
\begin{figure}[t]
    \centering
    \includegraphics[width=.31\linewidth]{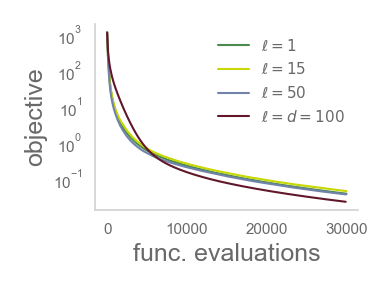}
    \includegraphics[width=.31\linewidth]{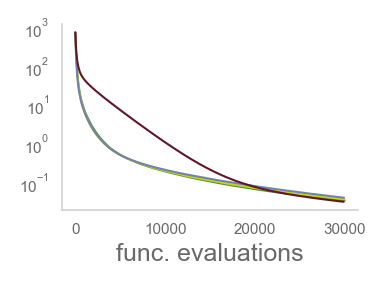}
    \includegraphics[width=.31\linewidth]{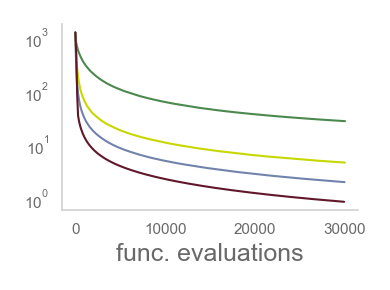}
    \caption{Optimization of a non-convex objective function satisfying the PL inequality. \textbf{left:} $\alpha = \ell/(d\lambda)$, $h=10^{-7}$. \textbf{center:} $\alpha = \ell/(d\lambda)$, $h=10^{-7}/k^{0.0001}$. \textbf{right:} $\alpha = \ell/(d\lambda\sqrt{k})$, $h=10^{-7}/k^{0.0001}$. Note the different axes between the figures.}
    \label{fig:nonconvexpl}
\end{figure}
The most notable feature  in Figures \ref{fig:convexpl} and \ref{fig:nonconvexpl} is that when the step-size is not fixed, choosing $\ell < d$ severely under performs the discrete gradient method. When $\ell < d$ only a subset of the available information is being used at each iteration, and with the step-size diminishing every successive iteration has less impact than those that precede it. Thus, the trade-off between cost-per-iteration and progress-per-iteration favors a higher per-iteration cost in return for more progress, particularly in the early iterations. This trade-off flips when the step-size is fixed: much faster progress is made early on when $\ell<d$ and many directions provide improvement of the objective, but of course the discrete gradient method ultimately catches up.\\ 
{\bf Variability due to stochasticity}.
One potential benefit of letting $\ell=d$ and performing the full discrete gradient method is that there is no randomness involved so the results are deterministic. The theorems provide guarantees for $\Expectation f(x_k)-f_*$, but here we investigate
how much variability can be expected between runs with identical initializations when $\ell < d$. We use the same non-convex function as previously, and perform 100 runs using the same initialization in each case,  considering 15000 function evaluations. The substantial overlap between the $\ell<d$ cases makes it difficult to discern between various values of $\ell$, thus in this figure we provide only the extremes, $\ell = 1$ and $\ell = d$. The bold line represents the mean of the $\ell=1$ case, and since $\ell=d$ is deterministic it is run only once.
\begin{figure}[t]
    \centering
    \includegraphics[width=.31\linewidth]{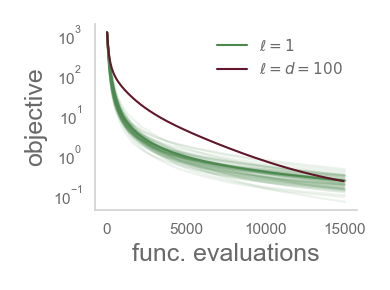}
    \includegraphics[width=.31\linewidth]{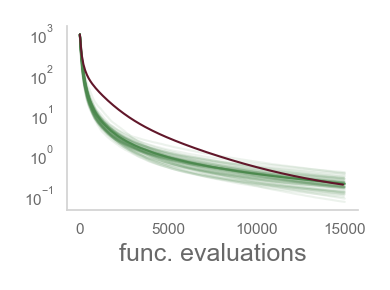}
    \includegraphics[width=.31\linewidth]{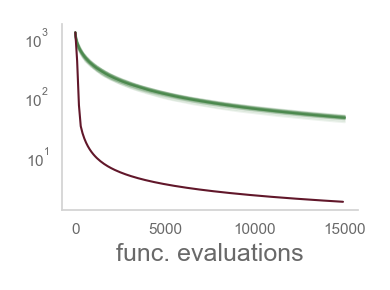}
    \caption{Optimization of a convex objective function with 100 restarts at the same initialization.  \textbf{left:} $\alpha = \ell/(d\lambda)$, $h=10^{-7}$. \textbf{center:} $\alpha = \ell/(d\lambda)$, $h=10^{-7}/k^{.0001}$. \textbf{right:} $\alpha = \ell/(d\lambda\sqrt{k})$, $h=10^{-7}/k^{1.0001}$. Note the different axes between the figures.}
    \label{fig:simulations}
\end{figure}
As expected, there is variability between runs when $\ell < d$, however in the early iterations even the worst case performs better than discretized gradient descent. We have only a heuristic explanation for this phenomenon: in the early iterations there are many directions that may lead to improvement so the inexpensive $\ell=1$ directions are more efficient than a full gradient estimate, but as we approach the optimum a judicious choice of direction is rewarded. The fact that the discrete gradient method catches up and eventually outperforms the others is consistent with the theory since 
all else equal, larger $\ell$ implies better progress per iteration. Of greater practical interest is the out-performance in the scenario $\ell \ll d$ in the early iterations. This suggests that for low-precision optimization, or when relatively few iterations are possible due to time or money constraints, it may be beneficial to choose $\ell <d$. Indeed, this is precisely the scenario where subspace descent methods are used, cf. \cite{Kozak2021Stochastic,pmlr-v80-choromanski18a,NIPS2018_7451}. Our theory does not cover the use of a backtracking line search such as the one presented in \cite{berahas2019global}, so we do not provide figures detailing its performance; however, in practice a line search is a necessary component to achieve outperformance compared to the gradient method as discussed at length in \cite{Kozak2021Stochastic}.\\
{\bf Effect of $h$.} Recall that there is an additive error term at each iteration due to the use of finite differences to approximate the gradient. In light of this fact, it may be surprising that the preceding figures appear to have objective function values that decrease monotonically with the increase in function evaluations even when $h$ is fixed. Note, however, that for a fixed $h$ as in Theorem \ref{th:PL_lin} the  error is asymptotically $\mathcal{O}(h^2)$, whereas in Theorem \ref{th:PL_alphaconst} a decaying $h_k = h/k^r$ is used with $r, h >0$, resulting in an objective that decays to zero.
Figure \ref{fig:h_examples} demonstrates that the limiting error, or lack thereof, guaranteed by the theorems is observed in practice. The first figure is a convex function satisfying the PL inequality with various fixed values for $h$.  The second is a non-convex function satisfying the PL inequality with the same values of $h$, and in the third figure we set $h_k = h/k^r$ with $h=10^{-5}$ and $r = 1$ and run the algorithm 100 times on the non-convex function. In all cases, $d = 5$, $\ell = 1$ and $\lambda = 4$, resulting in fast convergence. \\
Several conclusions can be drawn from Figure \ref{fig:h_examples}. First, as expected by the theorems, $h$ does not appear to play a role in the rate of convergence, only in the magnitude of the asymptotic error. 
Further, for all of the algorithms that fit our theory the finite difference error can essentially be ignored,  provided the desired accuracy is less than $\mathcal{O}(h^2)$. So, with $h$ the square root of machine precision, our theorems seem to indicate (and our figures support) that the error due to finite differences can be safely ignored. The figure on the right shows that the variance due to different stochastic realizations of the algorithm is small, suggesting that the theoretical analysis done in \cite{Kozak2021Stochastic} and \cite{kozak2019stochastic} may be extended to the discrete setting.

\begin{figure}[ht]
    \centering
    \includegraphics[width=.31\linewidth]{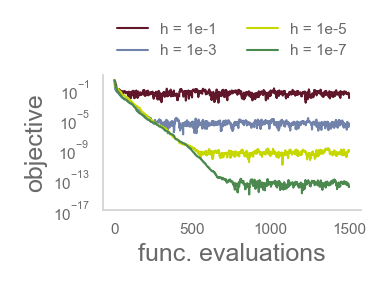}
    \includegraphics[width=.31\linewidth]{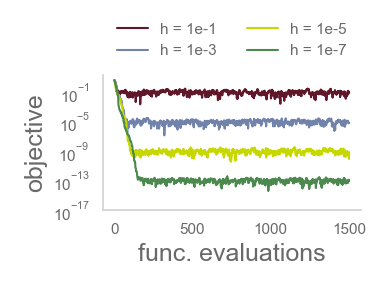}
    \includegraphics[width=.31\linewidth]{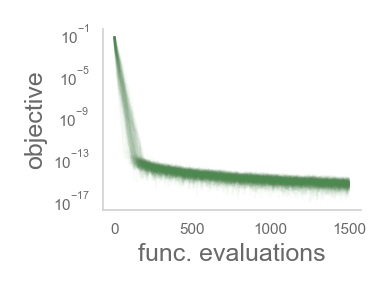}

    \caption{\textbf{left:} Optimization of a convex PL function using various values of $h$. \textbf{center:} Optimization of a non-convex PL function using various values of $h$. \textbf{right:} Optimization of a non-convex PL function with $h_k = 10^{-5}/k$ }
    \label{fig:h_examples}
\end{figure}

\section{Conclusions}\label{sec:conc}
We presented a method that generalizes several well-known derivative-free optimization algorithms including, for example, spherical smoothing and discretized versions of coordinate and gradient descent. 
We provide  convergence analysis of this generic method 
considering objective functions that are either convex or satisfy a Polyak-Łojasiewicz (PL) condition. Multiple possible choices for the stepsizes and the finite difference  parameter are studied.  The best choice depends on the error tolerance of the user. By allowing the stepsize and discretization to decay the algorithm achieves a slower rate of convergence but is able to converge to an optimum, but by fixing these values a faster convergence rate is obtained at the expense of converging only to within a region of an optimum.
To the best of our knowledge, this work provides the first convergence guarantees for the iterates of spherical smoothing and discretized coordinate descent to a minimizer when the objective function is convex. 

There are several possible extensions of this work.
Our analysis does not cover the use of an adaptive stepsize that has been shown empirically to be highly advantageous  \cite{NIPS2018_7451,Kozak2021Stochastic}. Theoretical analysis using a stochastic linesearch may be possible, several recent papers offer promising results that may extend to our case \cite{cartis2018global,berahas2019global,MR4060460}. It would be interesting to analyze 
the algorithm in the presence of noisy function evaluations as in \cite{MR50243,MR499560}. Such an extension would be of great practical consequence as many applications of interest have noisy objective functions. Finally, faster convergence may be possible using derivative-free quasi-Newton methods or any methods that exploit the curvature of the objective as in \cite{MR3936038,pmlr-v119-hanzely20a,pmlr-v119-hanzely20b,bollapragada2019adaptive}. 

\bibliographystyle{siam}
\bibliography{Bibliography.bib}

\clearpage
\section{Supplementary material and auxiliary lemmas}
Here we collect the main auxiliary results used in the convergence analysis of Algorithm~\ref{eqn: derivative-free}.
 
\paragraph{Proof of Lemma \ref{lem}:}
Let $k\in\N$,	denoting by $p^{(j)}$ the $j$-th column of $P_k$, we want to estimate 
\begin{equation}\label{eq}
\norm{\nabla_{\left(P_k, h_k\right)} f(x)-P_k^\top \nabla f(x)}{} = \sqrt{\sum_{j=1}^{\ell} \left(\left[\nabla_{\left(P_k, h_k\right)} f(x)\right]_j-\langle  \nabla f(x), p^{(j)} \rangle \right)^2}.
\end{equation}
To get an upper-bound for the term in parenthesis, we use \ref{H_lip}. As $\nabla f$ is $\lambda$-Lipschitz, the Descent Lemma \ref{DescentLemm} holds: for every $x\in\R^d$,
\begin{equation}\label{mmm}
\abs{f(x+h_k p_j) - f(x) - h_k\langle \nabla f(x), p^{(j)} \rangle } \leq \frac{\lambda h_k^2}{2}\norm{p^{(j)}}^2.
\end{equation}
Then, rearranging,
\begin{equation}\label{lb}
\begin{split}
\abs{\left[\nabla_{\left(P_k, h_k\right)} f(x)\right]_j-\langle \nabla f(x),p^{(j)} \rangle} & \leq \frac{\lambda h_k}{2}\norm{p^{(j)}}^2 \quad \text{a.s.}
\end{split}
\end{equation}
Note that \ref{A_Pas} implies $\|p^{(j)}\|^2\overset{a.s.}{=}d/\ell$ and so  \eqref{eq} and \eqref{lb}  yield
\begin{equation*}
\begin{split}
\norm{\nabla_{\left(P_k, h_k\right)} f(x)-P_k^\top \nabla f(x)}{} & \leq \sqrt{\sum_{j=1}^{\ell}  \left(\frac{\lambda h_k d}{2\ell}\right)^2} = \frac{\lambda h_k d}{2\sqrt{\ell}} \quad\text{ a.s.}.	
\end{split}
\end{equation*}
\qed

\paragraph{Lipschitz smooth functions}
We start with two well-known lemmas on Lipschitz smooth functions, namely differentiable functions with Lipschitz continuous gradient.
\begin{lemma}[Baillon-Haddad Theorem \cite{baillon1977quelques}]\label{BaillHadd}
    Let $f: \R^d \to \R$ be a convex and Fréchet differentiable function with $\lambda$-Lipschitz continuous gradient for some $\lambda>0$. Then $\nabla f$ is $1/\lambda$ co-coercive; namely, for every $x,y\in\R^d$,
    $$\langle \nabla f(y)-\nabla f(x), y-x\rangle \geq \norm{\nabla f(y)-\nabla f(x)}{}^2/\lambda.$$
\end{lemma}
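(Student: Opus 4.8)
The plan is to reduce the statement to the elementary fact that a convex, $\lambda$-smooth function lies above the quadratic lower envelope obtained by minimizing its descent-lemma upper bound. First I would fix a pair $x,y\in\R^d$ and introduce the two auxiliary functions $g_x(z):=f(z)-\langle\nabla f(x),z\rangle$ and $g_y(z):=f(z)-\langle\nabla f(y),z\rangle$. Both are convex (since $f$ is), Fréchet differentiable, and their gradients $\nabla g_x(z)=\nabla f(z)-\nabla f(x)$ and $\nabla g_y(z)=\nabla f(z)-\nabla f(y)$ are again $\lambda$-Lipschitz. Crucially $\nabla g_x(x)=0$, so by convexity $x$ is a \emph{global} minimizer of $g_x$; likewise $y$ minimizes $g_y$.

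Next I would apply the Descent Lemma \ref{DescentLemm} to $g_x$: for all $z,w\in\R^d$, $g_x(w)\leq g_x(z)+\langle\nabla g_x(z),w-z\rangle+\frac{\lambda}{2}\norm{w-z}^2$. Minimizing the right-hand side over $w$ (the minimizer is $w=z-\lambda^{-1}\nabla g_x(z)$) and using $g_x(x)=\min_w g_x(w)$ gives $g_x(x)\leq g_x(z)-\frac{1}{2\lambda}\norm{\nabla g_x(z)}^2$ for every $z$. Specializing to $z=y$ and unravelling the definition of $g_x$ yields
$$f(x)-f(y)-\langle\nabla f(x),x-y\rangle\leq-\frac{1}{2\lambda}\norm{\nabla f(y)-\nabla f(x)}^2.$$
Running the same argument with the roles of $x$ and $y$ interchanged (using $g_y$ and $z=x$) produces the twin inequality
$$f(y)-f(x)-\langle\nabla f(y),y-x\rangle\leq-\frac{1}{2\lambda}\norm{\nabla f(y)-\nabla f(x)}^2.$$

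Finally I would add the two displayed inequalities: the function-value terms cancel, the gradient terms combine (up to sign) into $\langle\nabla f(y)-\nabla f(x),y-x\rangle$ on the left, and the right-hand side becomes $-\frac{1}{\lambda}\norm{\nabla f(y)-\nabla f(x)}^2$, which after rearranging is exactly the asserted co-coercivity. The one step deserving care — and the crux of the argument — is the claim that $x$ is a \emph{global} minimizer of $g_x$, so that the descent-lemma minimization can be anchored at $x$: this is precisely where convexity of $f$ enters, since for a merely smooth $f$ the stationary point $x$ need not minimize $g_x$ and the conclusion fails. Everything else is routine manipulation with the descent lemma and inner products.
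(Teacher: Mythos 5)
Your argument is correct. Note that the paper does not prove this lemma at all: it is stated as a known result and attributed to Baillon--Haddad \cite{baillon1977quelques}, so there is no in-paper proof to compare against. What you have written is the standard self-contained derivation (the one found, e.g., in Nesterov's textbook treatment of smooth convex functions): tilt $f$ by a linear functional so that $x$ becomes a global minimizer of $g_x(z)=f(z)-\langle\nabla f(x),z\rangle$, use the descent lemma on $g_x$ minimized over the trial point to get the quadratic lower bound $g_x(x)\leq g_x(y)-\frac{1}{2\lambda}\norm{\nabla f(y)-\nabla f(x)}^2$, symmetrize, and add. All steps check out: $\nabla g_x$ is indeed $\lambda$-Lipschitz since it differs from $\nabla f$ by a constant, the stationary point of a convex differentiable function is a global minimizer, and the two tilted inequalities sum to exactly the cocoercivity bound with the factor $1/\lambda$ (not $1/(2\lambda)$) because the function-value terms cancel while the two $\frac{1}{2\lambda}$ terms combine. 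You also correctly identify the only place convexity is used, which is where the result would fail for a merely $\lambda$-smooth nonconvex $f$. This is a perfectly acceptable proof of the cited lemma.
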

\begin{lemma}[Descent Lemma {\cite[Sec 1.1.2]{Pol87}}]\label{DescentLemm}
	Let $f: \R^d \to \R$ be a Fréchet differentiable function with $\lambda$-Lipschitz continuous gradient. Then, for every $x,y\in \R^d$,
	\begin{equation*}
	\abs{f(y) - f(x) - \langle \nabla f(x), y-x \rangle }\leq \lambda\norm{y-x}{}^2/2.
	\end{equation*}
\end{lemma}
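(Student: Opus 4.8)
The plan is to reduce the multivariate inequality to a one-dimensional integral along the segment joining $x$ and $y$, and then to control the integrand using the Lipschitz property of $\nabla f$. Concretely, I would introduce the auxiliary scalar function $g\colon[0,1]\to\R$ defined by $g(t):=f(x+t(y-x))$. Since $f$ is Fréchet differentiable, the chain rule gives that $g$ is differentiable with $g'(t)=\langle \nabla f(x+t(y-x)),\, y-x\rangle$; moreover, $\lambda$-Lipschitz continuity of $\nabla f$ makes this derivative continuous in $t$, so the Fundamental Theorem of Calculus applies and yields
\[
f(y)-f(x)=g(1)-g(0)=\int_0^1 \langle \nabla f(x+t(y-x)),\, y-x\rangle\,dt.
\]

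Next I would subtract the constant term $\langle \nabla f(x),\, y-x\rangle = \int_0^1 \langle \nabla f(x),\, y-x\rangle\,dt$ from both sides, obtaining
\[
f(y)-f(x)-\langle \nabla f(x),\, y-x\rangle = \int_0^1 \langle \nabla f(x+t(y-x))-\nabla f(x),\, y-x\rangle\,dt.
\]
Taking absolute values and moving them inside the integral, the Cauchy--Schwarz inequality bounds the integrand by $\norm{\nabla f(x+t(y-x))-\nabla f(x)}\,\norm{y-x}$, and the $\lambda$-Lipschitz continuity of $\nabla f$ further bounds the first factor by $\lambda\,\norm{t(y-x)}=\lambda t\,\norm{y-x}$. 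Hence the integrand is at most $\lambda t\,\norm{y-x}^2$, and integrating, using $\int_0^1 t\,dt=1/2$, delivers exactly the claimed bound $\lambda\,\norm{y-x}^2/2$.

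There is no genuine obstacle here: the statement is the classical descent lemma and the argument is entirely routine. The only point requiring a word of care is the justification of the Fundamental Theorem of Calculus in the first step, namely that $t\mapsto\langle \nabla f(x+t(y-x)),\,y-x\rangle$ is integrable on $[0,1]$. This is immediate because $\lambda$-Lipschitzness forces $\nabla f$ to be continuous, so $g'$ is continuous on the compact interval $[0,1]$ and the FTC applies without any measure-theoretic subtlety. Since this lemma is standard (cf.\ the cited reference \cite{Pol87}), I would keep the write-up brief and emphasize only the two inequalities (Cauchy--Schwarz followed by the Lipschitz bound) that produce the factor $t$ responsible for the final $1/2$.
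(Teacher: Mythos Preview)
Your proof is correct and is the standard argument for the descent lemma. The paper does not actually give its own proof of this statement: it merely cites \cite[Sec~1.1.2]{Pol87} and treats the result as known, so there is nothing to compare against beyond noting that your argument is exactly the classical one found in that reference.
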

 
\paragraph{Real sequences} 
In this section, we first recall Opial's Lemma in the deterministic setting. In the next, we collect some results regarding convergence and convergence rates for real sequences.

\begin{lemma}[Opial, deterministic version \cite{MR211301}]\label{Opial}
	Let $\mathcal{Z}\subseteq\R^d$ be a non-empty subset and $\left(x_k\right)\subseteq\R^d$ a sequence. Assume that  
	\begin{itemize}
		\item for every $z\in \mathcal{Z}$, 
		$$\exists\lim_{k} \|x_k-z\|;$$
		\item every cluster point of $x_k$ belongs to $\mathcal{Z}$; namely,
		$$x_{k_j}\to x_{\infty} \ \ \implies \ \ x_{\infty} \in \mathcal{Z}.$$
	\end{itemize}
	Then there exists $\bar{x}\in \mathcal{Z}$ such that $x_k\to\bar{x}$.
\end{lemma}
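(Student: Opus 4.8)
This is the classical Opial argument: a bounded sequence with a unique cluster point converges to it. The plan is to proceed in three steps — (1) use the first hypothesis to get boundedness and hence at least one cluster point, which by the second hypothesis lies in $\mathcal Z$; (2) show that the two hypotheses together force \emph{all} cluster points to coincide; (3) invoke the elementary fact that a bounded sequence in $\R^d$ with a single cluster point is convergent. For Step 1, since $\mathcal Z\neq\emptyset$ I would fix some $z_0\in\mathcal Z$; the first hypothesis gives that $(\|x_k-z_0\|)_k$ converges, hence is bounded, so $(x_k)$ is bounded and by Bolzano--Weierstrass has a convergent subsequence, whose limit belongs to $\mathcal Z$ by the second hypothesis.

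\textbf{Uniqueness of cluster points.} This is the heart of the proof. Suppose $x_{k_j}\to\bar x_1$ and $x_{m_j}\to\bar x_2$ with $\bar x_1,\bar x_2\in\mathcal Z$. The first hypothesis guarantees that $\ell_i:=\lim_k\|x_k-\bar x_i\|$ exists for $i=1,2$. I would then use the algebraic identity
\[
\|x_k-\bar x_1\|^2-\|x_k-\bar x_2\|^2 \;=\; 2\langle x_k-\bar x_2,\ \bar x_2-\bar x_1\rangle + \|\bar x_2-\bar x_1\|^2 .
\]
The left-hand side converges to $\ell_1^2-\ell_2^2$, so the scalar sequence $\langle x_k-\bar x_2,\ \bar x_2-\bar x_1\rangle$ converges. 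Passing to the limit along $(x_{k_j})$ gives the value $\langle\bar x_1-\bar x_2,\ \bar x_2-\bar x_1\rangle=-\|\bar x_1-\bar x_2\|^2$, while passing along $(x_{m_j})$ gives $0$. Hence $\|\bar x_1-\bar x_2\|^2=0$, i.e. $\bar x_1=\bar x_2$, so there is a unique cluster point $\bar x\in\mathcal Z$.

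\textbf{Conclusion and main obstacle.} Finally I would note that a bounded sequence in $\R^d$ with a unique cluster point $\bar x$ must converge to $\bar x$: otherwise there exist $\varepsilon>0$ and a subsequence remaining outside the ball $B(\bar x,\varepsilon)$, and by Bolzano--Weierstrass this subsequence has a further convergent subsequence whose limit differs from $\bar x$, contradicting uniqueness. Therefore $x_k\to\bar x\in\mathcal Z$, which is the claim. There is no genuine obstacle here — the proof is routine — the only point demanding a moment of care is the cross-term manipulation in Step 2, which is exactly the mechanism by which the two separately-existing limits $\ell_1,\ell_2$ are forced to pin down a single point.
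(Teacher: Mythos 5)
Your proof is correct and complete: boundedness from the first hypothesis, the polarization identity forcing all cluster points to coincide, and the standard "bounded sequence with a unique cluster point converges" argument are all carried out without gaps. The paper itself states this lemma without proof (citing Opial's original article), and your argument is exactly the classical one, so there is nothing to compare against beyond noting that your route is the standard proof.
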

\begin{lemma}[Chung's Lemma \cite{MR64365}]\label{chung}
	Let $\left(a_k\right)$ be a non-negative sequence and $c, \ d$ and $p$ strictly-positive constants.\\
	First suppose that, for every $k\in\N$,
	$$a_{k+1}\leq\left(1-\frac{c}{k}\right)a_k+\frac{d}{k^{p+1}}.$$
	Then
	\begin{equation}
	a_k\leq 
	\begin{cases}
	\frac{d}{\left(c-p\right)k^p}+o\left(\frac{1}{k^p}\right) \ \ \ \ \ & \text{if} \ \ p<c; \\
	 
	O\left(\frac{\log k}{k^{c}}\right) \ \ \ \ \ & \text{if} \ \ p=c;\\
	 
	O\left(\frac{1}{k^c}\right) \ \ \ \ \ & \text{if} \ \ p>c.  	
	\end{cases}
	\end{equation}
	Now suppose that, for some $0<s<1$ and $s<t$ and every $k\in\N$,
	$$a_{k+1}\leq\left(1-\frac{c}{k^s}\right)a_k+\frac{d}{k^{t}}.$$
	Then
	$$a_k\leq \frac{d}{c} \frac{1}{k^{t-s}}+o\left(\frac{1}{k^{t-s}}\right).$$
\end{lemma}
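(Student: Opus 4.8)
The plan is to prove each claimed rate by comparing $(a_k)$ with an explicit envelope of the conjectured order and verifying the comparison by induction, using the elementary estimates $\frac{1}{k^{\beta}}-\frac{1}{(k+1)^{\beta}}\le \frac{\beta}{k^{\beta+1}}$ (mean value theorem) and $(1+1/k)^{-\beta}\ge 1-\beta/k$ (convexity). Since the multiplier $1-c/k$ (resp.\ $1-c/k^s$) is nonnegative only once $k>c$ (resp.\ $k^s>c$), I would fix $k_0$ with $1-c/k\in(0,1)$ for $k\ge k_0$ and run every argument from $k_0$ on, absorbing the finitely many initial terms into constants. The guiding device throughout is the discrete variation-of-constants (Duhamel) formula obtained by iterating the recursion,
\begin{equation*}
a_k\ \le\ \pi_k\, a_{k_0}\;+\;d\sum_{i=k_0}^{k-1}\frac{\pi_k}{\pi_{i+1}}\,\frac{1}{i^{p+1}},\qquad \pi_k:=\prod_{j=k_0}^{k-1}\Bigl(1-\tfrac{c}{j}\Bigr),
\end{equation*}
together with $\log\pi_k=-c\sum_{j}1/j+O(1)=-c\log k+O(1)$, i.e.\ $\pi_k\asymp k^{-c}$.

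For the first recursion this representation immediately settles the two cases in which only the order is asserted. Since $\pi_k/\pi_{i+1}=\prod_{j=i+1}^{k-1}(1-c/j)\asymp (i/k)^{c}$, the sum is of order $k^{-c}\sum_{i=k_0}^{k-1} i^{\,c-p-1}$. When $p>c$ the exponent $c-p-1<-1$, the series converges, and $a_k=O(k^{-c})$; when $p=c$ the exponent equals $-1$, the partial sum is $\asymp\log k$, and $a_k=O(k^{-c}\log k)$. These are the two non-sharp claims.

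The delicate cases are $p<c$ (first recursion) and the second recursion, where the \emph{exact} leading constant $d/(c-p)$, resp.\ $d/c$, is claimed with an $o(\cdot)$ remainder. Here I would argue in two stages on $u_k:=k^{p}a_k$ (resp.\ $k^{t-s}a_k$). First, a crude induction $u_k\le M$ with $M$ large yields $u_k\le U$ for all $k\ge k_0$, i.e.\ the correct order. Second, feeding this boundedness back into the recursion gives, with $q:=c-p>0$,
\begin{equation*}
u_{k+1}\ \le\ \Bigl(1-\tfrac{q}{k}\Bigr)u_k+\frac{d}{k}+O\!\Bigl(\tfrac{1}{k^{2}}\Bigr),
\end{equation*}
so whenever $u_k\ge \tfrac{d}{q}+\delta$ one has $u_{k+1}-u_k\le -\tfrac{q\delta}{2k}$ for $k$ large; since $\sum 1/k=\infty$ the sequence cannot remain above $\tfrac{d}{q}+\delta$ and must reach some $K$ with $u_K\le \tfrac{d}{q}+\delta$. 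From that $K$ on, the sharp induction step, which closes precisely because $M=\tfrac{d}{c-p}+\delta\ge \tfrac{d}{c-p}$, keeps $u_k\le \tfrac{d}{q}+\delta$; hence $\limsup_k u_k\le d/(c-p)$, and $\delta\downarrow 0$ gives the claim. The second recursion is handled the same way and is in fact cleaner: because $0<s<1$, the forcing exponent $t$ lies strictly below the envelope-mismatch exponent $t-s+1$, so in $u_{k+1}\le(1-c/k^{s})u_k+d\,k^{-s}+o(k^{-s})$ the forcing term $d\,k^{-s}$ strictly dominates the mismatch, the drift pins $\limsup_k u_k\le d/c$, and one obtains $a_k\le \tfrac{d}{c}\,k^{-(t-s)}+o(k^{-(t-s)})$.

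The main obstacle is pinning the \emph{sharp} constant in the $p<c$ and second-recursion cases rather than a mere $O$-bound: a single induction only reproduces the correct order with an unspecified constant, so one must combine a first crude bound with the fixed-point/attraction argument on $u_k$ to force $\limsup_k u_k$ down to the exact value $d/(c-p)$ (resp.\ $d/c$). A secondary, purely bookkeeping, difficulty is the sign of $1-c/k$ for small $k$ together with the uniform control of the $O(1/k^{2})$ errors coming from the $(1+1/k)^{\beta}$ expansions; both are dispatched by starting from $k_0>c$ and using the already-established boundedness of $u_k$.
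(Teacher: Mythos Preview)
The paper does not give a proof of this lemma; it is listed among the auxiliary results with a citation to Chung's original paper, so there is no in-paper argument to compare against.

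Your outline is a correct and standard route to Chung's lemma. The Duhamel representation together with $\pi_k\asymp k^{-c}$ disposes of the two order-only cases $p\ge c$ immediately. For $p<c$ and for the second recursion, your two-stage argument on $u_k=k^{p}a_k$ (respectively $k^{t-s}a_k$) --- first crude boundedness, then the drift-and-trap to force $\limsup_k u_k\le d/(c-p)$ (respectively $d/c$) --- is exactly the classical way to pin the sharp leading constant rather than a mere $O$-bound. One small bookkeeping point worth making explicit: in the trap step you need the first index $K$ with $u_K\le d/(c-p)+\delta$ to occur \emph{after} the $O(1/k^2)$ (respectively $o(1/k^s)$) remainder has become uniformly dominated by the drift; since your argument actually shows that $u_k$ dips below the threshold infinitely often, you may simply pick such a $K$ beyond the required cutoff. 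With that detail in place the induction closes and the proof is complete.
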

\begin{lemma}\label{lem:aux}
	Let $\left(a_k\right)$ and $\left(c_k\right)$ be non-negative sequences with $\lim_k c_k =0$ and let $\eta\in\left(0,1\right)$. If for every $k\in\N$
	\begin{equation}\label{ineqqq}
	a_{k+1}\leq\left(1-\eta\right) a_k +c_k,
	\end{equation}
	then $\lim_k a_k =0$.\\
	Moreover, if $c_k=c/k^t$ for some $c\geq 0$ and $t>0$, then
	$$\limsup_k \ k^t a_k\leq \frac{c}{\eta}.$$
\end{lemma}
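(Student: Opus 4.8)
The plan is to establish both assertions by taking $\limsup$ in the recursion~\eqref{ineqqq}, after first securing the necessary a priori boundedness. For the first claim I would observe that since $c_k\to 0$ the sequence $(c_k)$ is bounded, say $c_k\le C$ for all $k$; then a straightforward induction using $a_{k+1}\le(1-\eta)a_k+C$ gives $a_k\le\max\{a_0,\,C/\eta\}$, so $(a_k)$ is bounded and $L:=\limsup_k a_k$ is finite. Taking $\limsup$ on both sides of~\eqref{ineqqq}, and using that $\limsup_k a_{k+1}=\limsup_k a_k=L$, that $\limsup_k(1-\eta)a_k=(1-\eta)L$ since $1-\eta>0$, and subadditivity of $\limsup$ together with $\limsup_k c_k=0$, one obtains $L\le(1-\eta)L$, hence $\eta L\le 0$, i.e.\ $L\le 0$. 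Since $a_k\ge 0$, this forces $\lim_k a_k=0$.

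For the second claim, set $b_k:=k^t a_k$. Multiplying~\eqref{ineqqq} by $(k+1)^t$ gives
\[
b_{k+1}\;\le\;(1-\eta)\Bigl(1+\tfrac1k\Bigr)^{t} b_k\;+\;c\Bigl(1+\tfrac1k\Bigr)^{t}.
\]
Since $(1-\eta)(1+1/k)^t\to 1-\eta<1$, I would fix $K$ with $(1-\eta)(1+1/k)^t\le 1-\eta/2$ for $k\ge K$ and use $(1+1/k)^t\le 2^t$ for $k\ge 1$; induction then yields $b_k\le\max\{b_K,\,2^{t+1}c/\eta\}$ for $k\ge K$, so $M:=\limsup_k b_k<\infty$. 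Finally, taking $\limsup$ in $(k+1)^t a_{k+1}\le(1-\eta)(k+1)^t a_k+c\,(k+1)^t/k^t$: the left-hand side has $\limsup$ equal to $\limsup_m m^t a_m=M$ (reindex $m=k+1$), while on the right, using $(k+1)^t/k^t\to 1$ one gets $\limsup_k(1-\eta)(k+1)^t a_k=(1-\eta)M$ and $c\,(k+1)^t/k^t\to c$, so subadditivity of $\limsup$ gives $M\le(1-\eta)M+c$, whence $M\le c/\eta$.

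The step I expect to be the only real obstacle is the a priori boundedness that must precede each $\limsup$ computation: without knowing $L<\infty$ (resp.\ $M<\infty$) the inequality $\eta L\le 0$ (resp.\ $\eta M\le c$) is vacuous. For $b_k=k^t a_k$ the crude bound $a_k\le\mathrm{const}$ is useless, so one genuinely needs the perturbed geometric recursion above and the fact that its multiplier eventually drops strictly below $1$. One should also keep in mind the elementary fact that $\limsup_k(u_k v_k)=u\,\limsup_k v_k$ when $u_k\to u>0$ and $(v_k)$ is non-negative and bounded, which is what justifies passing from $(1-\eta)(k+1)^t a_k=(1-\eta)\tfrac{(k+1)^t}{k^t}b_k$ to $(1-\eta)M$ in the last step; everything else is routine.
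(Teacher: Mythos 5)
Your proof is correct. The first claim is handled essentially as in the paper: both arguments first establish boundedness of $(a_k)$ (the paper by unrolling the recursion against a geometric series, you by an induction against the fixed point $\max\{a_0,C/\eta\}$) and then pass to the $\limsup$ in \eqref{ineqqq} to get $\eta L\le 0$. For the second claim, however, you take a genuinely different route, and it is worth flagging that your route is the one that actually works. The paper argues by comparison with the explicit supersolution $c/(\eta k^t)$, starting from the displayed inequality $\tfrac{c}{k^t}\le\tfrac{c}{\eta}\bigl[\tfrac{1}{(k+1)^t}-(1-\eta)\tfrac{1}{k^t}\bigr]$; but after clearing $c/\eta$ this reduces to $\tfrac{1}{k^t}\le\tfrac{1}{(k+1)^t}$, which is false for $t>0$, so the telescoping step $a_{k+1}-\tfrac{c}{\eta(k+1)^t}\le(1-\eta)\bigl[a_k-\tfrac{c}{\eta k^t}\bigr]$ is not justified as written (a corrected supersolution argument with $(k+1)^t/k^t\ge 1$ would only yield the weaker constant $2c/\eta$ without extra work). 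Your approach — rewriting the recursion for $b_k=k^t a_k$ as a perturbed geometric inequality with multiplier $(1-\eta)(1+1/k)^t$, securing $\limsup_k b_k<\infty$ because that multiplier eventually drops below $1-\eta/2$, and then taking $\limsup$ using $(1+1/k)^t\to 1$ — cleanly delivers the sharp bound $M\le c/\eta$. You are also right that the a priori finiteness of $L$ and $M$ is the only genuinely delicate point: without it the inequalities $\eta L\le 0$ and $\eta M\le c$ are vacuous, and the crude bound $a_k\le\mathrm{const}$ says nothing about $k^ta_k$.
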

\begin{proof}
    Iterating the inequality in \eqref{ineqqq}, we get
    \begin{equation*}
    \begin{split}
	a_{k} & \leq(1-\eta)^k a_0 \\
	&+ \left(c_{k-1}+(1-\eta) c_{k-2} + (1-\eta)^2 c_{k-3} + (1-\eta)^3 c_{k-4}+...+(1-\eta)^{k-1}c_0 \right)\\
	& \leq a_0 + \left(\sup_{j\geq 0} c_j \right) \sum_{j\geq 0}^{} (1-\eta)^j = : \tilde{C} <+\infty,
    \end{split}
    \end{equation*}
    where we used the fact that $\left(c_k\right)$ is convergent (and thus bounded) and that $\eta\in\left(0,1\right)$. In particular, the sequence $\left(a_k\right)$ is bounded and so its $\limsup$ is a real number. Then, again from the hypothesis that $a_{k+1}\leq(1-\eta) a_k +c_k$ and the existence of $\lim_k c_k$, we have
    \begin{equation*}
    \begin{split}
	\limsup_k a_{k}& \leq \limsup_k \left[(1-\eta) a_k + c_k \right] = (1-\eta) \limsup_k a_k + \lim_k c_k.
    \end{split}
    \end{equation*}
    So, as $\lim_k c_k =0$ and $\limsup_k a_k \in \left[0,+\infty\right)$, $\eta\limsup_k a_k\leq 0$.
    Finally, as $\eta\in\left(0,1\right)$,
    $$\limsup_k a_k\leq 0$$
    and so, as $a_k\geq 0$, $\lim_k a_k = 0$.
    Now assume that $c_k=c/k^t$ for some $c\geq 0$ and $t>0$. By $k^t\leq (k+1)^t$, we have
    $$\frac{c}{k^t}\leq\frac{c}{\eta}\left[\frac{1}{(k+1)^t}-(1-\eta)\frac{1}{k^t}\right].$$
    Using the latter inequality \eqref{ineqqq}, we get
    \begin{equation}\label{innn}
        a_{k+1}-\frac{c}{\eta(k+1)^t}\leq(1-\eta)\left[a_{k}-\frac{c}{\eta k^t}\right].
    \end{equation}
    First suppose that there exists $\bar{k}\in\N$ such that $a_{\bar{k}}\leq \frac{c}{\eta \bar{k}^{t}}$. Using \eqref{innn}, it is easy to see by recursion that, for every $k\geq \bar{k}$,
    $$a_{k}\leq \frac{c}{\eta k^{t}}$$
    and so that the claim holds. Now suppose the opposite; namely, that for every $k\in\N$
    $$a_k-\frac{c}{\eta k^t}>0.$$
    Then, iterating \eqref{innn}, we get
    \begin{equation*}
        a_{k}-\frac{c}{\eta k^t}\leq(1-\eta)^k\left[a_{1}-\frac{c}{\eta}\right].
    \end{equation*}
    Finally,
    $$\limsup_k \ k^t a_k\leq \limsup_k \left\{ \frac{c}{\eta}+k^t(1-\eta)^k\left[a_{1}-\frac{c}{\eta}\right] \right\}=\frac{c}{\eta}.$$
\end{proof}
 
We conclude this part with the following three well-known results. The proof of the first can be found in \cite[Theorem 3.3.1]{knopp1990theory}, while the second is just the deterministic version of Lemma \ref{PC}. For the third, related to estimates with errors, see \cite{schmidt2011convergence}.
\begin{lemma}\label{smallorate}
	Let $\left(a_k\right)$ be a non-negative, non-increasing and summable sequence. Then $a_k=o(k^{-1})$. 
\end{lemma}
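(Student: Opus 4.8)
The plan is to prove the equivalent statement $k\,a_k\to 0$ directly, using only the Cauchy tail estimate for the convergent series $\sum_j a_j$ together with the monotonicity of $(a_k)$. Write $R_m:=\sum_{j=m+1}^{\infty}a_j$ for the tail of the series; since $(a_k)$ is summable, $R_m\to 0$ as $m\to\infty$, and every $R_m$ is finite.

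The first step is the key monotonicity observation: for any two indices $m<k$, since the $k-m$ terms $a_{m+1},\dots,a_k$ are all at least $a_k$ (the sequence is non-increasing and non-negative),
$$(k-m)\,a_k\;\le\;\sum_{j=m+1}^{k}a_j\;\le\;R_m.$$
Then, given $\varepsilon>0$, I would choose $m$ large enough that $R_m<\varepsilon/2$, which is possible by summability. For every $k\ge 2m$ we have $k-m\ge k/2$, hence
$$\frac{k}{2}\,a_k\;\le\;(k-m)\,a_k\;\le\;R_m\;<\;\frac{\varepsilon}{2},$$
so that $k\,a_k<\varepsilon$ for all $k\ge 2m$. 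Since $\varepsilon>0$ was arbitrary, $k\,a_k\to 0$, i.e. $a_k=o(k^{-1})$.

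There is no genuine obstacle here; the only point requiring mild care is the bookkeeping that lets the fixed cutoff index $m$ be absorbed — namely splitting $\{1,\dots,k\}$ at $m$ and invoking $k\ge 2m$ to guarantee $k-m\ge k/2$, so that the single tail bound $R_m$ controls $k\,a_k$ up to a constant factor. One could equally well run the classical dyadic-block argument, noting $n\,a_{2n}\le\sum_{j=n+1}^{2n}a_j\to 0$ and then handling odd indices via $a_{2n+1}\le a_{2n}$, but the cutoff argument above is slightly cleaner and avoids the separate parity cases.
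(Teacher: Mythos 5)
Your proof is correct: the tail bound $(k-m)a_k\le\sum_{j=m+1}^{k}a_j\le R_m$ combined with the cutoff $k\ge 2m$ cleanly yields $k\,a_k\to 0$. The paper does not prove this lemma itself but cites the classical result in Knopp (Theorem 3.3.1), whose standard proof is exactly this Cauchy-tail-plus-monotonicity argument, so your approach is essentially the same as the one the paper relies on.
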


\begin{lemma}\label{PC_det} Let $(r_k)$, $(\beta_k)$, $(y_k)$ and $(w_k)$ be non-negative real sequences with $(\beta_k)$ and $(w_k)$ in $\ell^1$. Suppose that, for every $k\in\N$,
	$$r_{k+1} - (1+\beta_k)r_k +y_k \leq  w_k.$$
	Then $(r_k)$ is convergent and $(y_k)$ belongs to $\ell^1$. 
\end{lemma}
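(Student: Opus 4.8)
The plan is to eliminate the multiplicative perturbation $(1+\beta_k)$ by a change of variables and then apply the classical ``almost supermartingale'' argument for real sequences. Since $(\beta_k)\in\ell^1$, the partial products $B_k:=\prod_{j=0}^{k-1}(1+\beta_j)$ (with the convention $B_0:=1$) form a non-decreasing sequence satisfying $1\le B_k\le B_\infty:=\prod_{j=0}^{\infty}(1+\beta_j)<+\infty$, because $\log B_k=\sum_{j<k}\log(1+\beta_j)\le\sum_{j<k}\beta_j$ stays bounded. First I would divide the hypothesis $r_{k+1}-(1+\beta_k)r_k+y_k\le w_k$ by $B_{k+1}$; using the identity $(1+\beta_k)/B_{k+1}=1/B_k$ and setting $\tilde r_k:=r_k/B_k$, $\tilde y_k:=y_k/B_{k+1}$, $\tilde w_k:=w_k/B_{k+1}$, this becomes $\tilde r_{k+1}-\tilde r_k+\tilde y_k\le\tilde w_k$, where $(\tilde w_k)$ is again non-negative and summable since $\tilde w_k\le w_k$ (as $B_{k+1}\ge1$), and $(\tilde r_k),(\tilde y_k)$ are non-negative.

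Next I would run the standard monotonization. Define $a_k:=\tilde r_k-\sum_{j=0}^{k-1}\tilde w_j$. The reduced inequality gives $a_{k+1}-a_k=\tilde r_{k+1}-\tilde r_k-\tilde w_k\le-\tilde y_k\le0$, so $(a_k)$ is non-increasing; moreover $a_k\ge-\sum_{j=0}^{\infty}\tilde w_j>-\infty$ because $\tilde r_k\ge0$. Hence $(a_k)$ converges, and therefore $\tilde r_k=a_k+\sum_{j=0}^{k-1}\tilde w_j$ converges to a finite limit; multiplying back by $B_k\to B_\infty$ shows that $(r_k)$ converges. For the summability of $(y_k)$, I would telescope: summing $\tilde y_k\le\tilde w_k+\tilde r_k-\tilde r_{k+1}$ over $k=0,\dots,N$ yields $\sum_{k=0}^{N}\tilde y_k\le\tilde r_0-\tilde r_{N+1}+\sum_{k=0}^{N}\tilde w_k\le\tilde r_0+\sum_{k=0}^{\infty}\tilde w_k<+\infty$, so $(\tilde y_k)\in\ell^1$; since $y_k=B_{k+1}\tilde y_k\le B_\infty\tilde y_k$, we conclude $(y_k)\in\ell^1$.

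There is no genuinely hard step here: the only points requiring a little care are the algebraic identity $(1+\beta_k)/B_{k+1}=1/B_k$ that makes the substitution collapse the recursion, and the observation that the full assumption $\sum\beta_k<+\infty$ (not merely $\beta_k\to0$) is what keeps $B_\infty$ finite, so that convergence of $(\tilde r_k)$ transfers back to $r_k=B_k\tilde r_k$. Alternatively, one could derive the statement directly from the stochastic Robbins--Siegmund Lemma~\ref{PC} applied with the trivial filtration $\mathcal F_k=\{\emptyset,\Omega\}$ and deterministic sequences, but the argument above is self-contained and essentially as short.
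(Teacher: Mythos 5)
Your proof is correct, and every step checks out: the identity $(1+\beta_k)/B_{k+1}=1/B_k$ does collapse the recursion, $1\le B_k\le B_\infty<+\infty$ follows from $\log(1+\beta_j)\le\beta_j$ and $(\beta_k)\in\ell^1$, the monotonized sequence $a_k=\tilde r_k-\sum_{j<k}\tilde w_j$ is non-increasing and bounded below, and the telescoping bound gives $(\tilde y_k)\in\ell^1$, hence $(y_k)\in\ell^1$ after multiplying back by the bounded factors $B_{k+1}$. The route is genuinely different from the paper's, though: the paper offers no argument at all for Lemma~\ref{PC_det}, stating only that it ``is just the deterministic version of Lemma~\ref{PC}'' (Robbins--Siegmund), i.e.\ exactly the alternative you mention in your closing remark (trivial filtration, deterministic sequences viewed as constant random variables). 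Your normalization-by-partial-products argument is the standard self-contained proof of that deterministic version; it buys independence from the stochastic machinery and makes explicit where each hypothesis is used (in particular that $\sum_k\beta_k<+\infty$, not merely $\beta_k\to0$, is what keeps $B_\infty$ finite), at the cost of a page of elementary bookkeeping that the paper avoids by citation.
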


\begin{lemma}\label{Bihari}[Discrete Bihari's Lemma]
Assume that $\seq{u_k}$ is a non-negative real sequence, that $\seq{S_k}$ is a non-decreasing sequence such that $S_0\geq u_0^2$ and that $\seq{\rho_j}$ is a non-negative sequence. If, for every $k\in\N$,
	\begin{equation*}
	\begin{split}
	u_{k}^2 \leq S_{k} +\sum_{j=0}^{k} \rho_j u_j,
	\end{split}
	\end{equation*}
then, for every $k\in\N$,
	\begin{equation*}
	u_k\leq \frac{1}{2}\sum_{j=0}^{k}\rho_j+\left[S_{k}+\left(\frac{1}{2}\sum_{j=0}^{k}\rho_j\right)^2\right]^{1/2}.    
	\end{equation*}

\end{lemma}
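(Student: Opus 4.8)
The plan is to reduce the implicit (self-referential) inequality to a single scalar quadratic inequality by introducing the right-hand side as an auxiliary quantity and exploiting its monotonicity. Concretely, I would set
\[
T_k := S_k + \sum_{j=0}^{k} \rho_j u_j,
\]
so that the hypothesis reads simply $u_k^2 \leq T_k$, and in particular $u_k \leq \sqrt{T_k}$ for every $k\in\mathbb{N}$.

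The crucial step I would establish first is that $\seq{T_k}$ is non-decreasing. Indeed, since $\seq{S_k}$ is non-decreasing and $\rho_k, u_k \geq 0$, we have $T_k - T_{k-1} = (S_k - S_{k-1}) + \rho_k u_k \geq 0$. Consequently, for every index $j\leq k$ the pointwise bound $u_j \leq \sqrt{T_j} \leq \sqrt{T_k}$ holds. This monotonicity observation is what decouples the summands $u_j$ inside the accumulated sum from their individual indices, replacing each of them by the single quantity $\sqrt{T_k}$.

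Next I would substitute this uniform bound back into the definition of $T_k$. Writing $A_k := \sum_{j=0}^{k} \rho_j$, we obtain
\[
T_k = S_k + \sum_{j=0}^{k} \rho_j u_j \leq S_k + \sqrt{T_k}\, A_k.
\]
Setting $x := \sqrt{T_k} \geq 0$, this is exactly the quadratic inequality $x^2 - A_k x - S_k \leq 0$. Since the associated parabola opens upward and $S_k\geq 0$, the variable $x$ is bounded above by the larger root, namely
\[
\sqrt{T_k} = x \leq \frac{A_k + \sqrt{A_k^2 + 4 S_k}}{2} = \frac{A_k}{2} + \sqrt{S_k + \left(\frac{A_k}{2}\right)^2}.
\]
Because $u_k \leq \sqrt{T_k}$, expanding $A_k = \sum_{j=0}^{k}\rho_j$ yields precisely the claimed estimate $u_k \leq \frac{1}{2}\sum_{j=0}^{k}\rho_j + \left[S_k + \left(\frac{1}{2}\sum_{j=0}^{k}\rho_j\right)^2\right]^{1/2}$.

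I do not anticipate a genuine obstacle here, as the argument is elementary once the reduction is set up; the only step requiring care is verifying that $\seq{T_k}$ is non-decreasing, since this legitimizes the replacement $u_j \leq \sqrt{T_k}$ uniformly for all $j\leq k$. I note that the hypothesis $S_0\geq u_0^2$ is not actually needed for this route: the base case $u_0\leq\sqrt{T_0}$ already follows from the $k=0$ instance of the assumption, namely $u_0^2\leq S_0+\rho_0 u_0 = T_0$, so the same proof goes through verbatim.
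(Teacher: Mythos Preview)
Your argument is correct. The reduction via $T_k := S_k + \sum_{j=0}^{k}\rho_j u_j$, the monotonicity of $\seq{T_k}$, the uniform bound $u_j\leq\sqrt{T_k}$ for $j\leq k$, and the resulting scalar quadratic inequality in $\sqrt{T_k}$ are all valid; solving the quadratic gives exactly the stated bound. Your remark that the hypothesis $S_0\geq u_0^2$ is superfluous is also correct: the $k=0$ instance of the assumption already yields $u_0^2\leq T_0$, and the non-negativity of the discriminant $A_k^2+4S_k$ is forced by the existence of the non-negative solution $x=\sqrt{T_k}$.

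As for comparison: the paper does not actually prove this lemma. It is stated among three ``well-known results'' in the supplementary material, and for this one the authors simply refer to \cite{schmidt2011convergence}. Your argument is in fact the standard proof that appears in that reference (Lemma~1 there), so there is no divergence in approach to discuss.
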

 
\paragraph{Random sequences} In this section, we recall the extension of Opial's Lemma \ref{Opial} and Lemma \ref{PC_det} to the stochastic setting (see Lemma \ref{StochOpial} and \ref{PC} - respectively). For completeness, we show the proof of Lemma \ref{StochOpial}, starting with the auxiliary Lemma \ref{aux}. In the next, $\left(\Omega, \mathcal{A}, \mathbb{P}\right)$ is a probability space and we say that $\tilde{\Omega}$ is full-measure (f.m.) if $\tilde{\Omega}\subseteq\Omega$ and $\mathbb{P}(\tilde{\Omega})=1$.
The proof of the Lemma \ref{aux} can be found in \cite[Proposition 2.3]{combettes2015stochastic} as part of a result about Fejér monotonicity. We repeat the reasoning for clarity.
\begin{lemma}\label{aux}
	Let $\mathcal{Z}\subseteq\R^d$ be a non-empty subset and $(x_k)$ a random sequence on $\left(\Omega, \mathcal{A}, \mathbb{P}\right)$ with values in $\R^d$. Assume that, for every $z\in \mathcal{Z}$, there exists $\Omega_z$ \ f.m. such that, for every $\omega\in\Omega_z$, the sequence $ \seq{\|x_k(\omega)-z\|} $ converges. Then there exists $\tilde{\Omega}$ \ f.m. such that, for every $\omega\in\tilde{\Omega}$ and every $z\in\mathcal{Z}$, $\exists\lim_k \|x_k(\omega)-z\|$.
\end{lemma}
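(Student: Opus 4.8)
The plan is to replace the uncountable family $\{\Omega_z\}_{z\in\mathcal{Z}}$ by a countable one, exploiting the separability of $\mathcal{Z}$, and then to propagate convergence from a countable dense subset to every point of $\mathcal{Z}$ using the $1$-Lipschitz dependence of $\|x-z\|$ on $z$.

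First I would observe that, being a subset of the separable metric space $\R^d$, the set $\mathcal{Z}$ is itself separable; fix a countable dense subset $D\subseteq\mathcal{Z}$ and set $\tilde{\Omega}:=\bigcap_{z\in D}\Omega_z$. Since $D$ is countable and each $\Omega_z$ is f.m., the complement $\Omega\setminus\tilde{\Omega}=\bigcup_{z\in D}(\Omega\setminus\Omega_z)$ is a countable union of null sets, so $\tilde{\Omega}$ is f.m.

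Next, fix $\omega\in\tilde{\Omega}$ and an arbitrary $z\in\mathcal{Z}$; it suffices to show that the real sequence $\seq{\|x_k(\omega)-z\|}$ is Cauchy. Given $\varepsilon>0$, use the density of $D$ to choose $z'\in D$ with $\|z-z'\|<\varepsilon/4$. Since $\omega\in\Omega_{z'}$, the sequence $\seq{\|x_k(\omega)-z'\|}$ converges, hence is Cauchy, so there is $N\in\N$ with $\abs{\,\|x_n(\omega)-z'\|-\|x_m(\omega)-z'\|\,}<\varepsilon/2$ for all $n,m\geq N$. By the reverse triangle inequality $\abs{\,\|x-z\|-\|x-z'\|\,}\leq\|z-z'\|$, valid for every $x\in\R^d$, we obtain for $n,m\geq N$
\[
\abs{\,\|x_n(\omega)-z\|-\|x_m(\omega)-z\|\,}\leq 2\|z-z'\|+\abs{\,\|x_n(\omega)-z'\|-\|x_m(\omega)-z'\|\,}<\varepsilon .
\]
Hence $\seq{\|x_k(\omega)-z\|}$ is Cauchy and therefore convergent, which is exactly the assertion.

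The argument is essentially routine; the only delicate point is the reduction from the uncountable hypothesis to a countable intersection — without separability, $\bigcap_{z\in\mathcal{Z}}\Omega_z$ need not even be measurable, let alone f.m. The reverse triangle inequality is precisely what makes a countable dense subset sufficient, since $z\mapsto\|x-z\|$ is $1$-Lipschitz uniformly in $x$.
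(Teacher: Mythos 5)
Your proof is correct and follows essentially the same route as the paper's: reduce to a countable dense subset of $\mathcal{Z}$, take the countable intersection of the corresponding full-measure sets, and transfer convergence to arbitrary $z$ via the reverse triangle inequality. The only cosmetic difference is that you verify the Cauchy criterion while the paper sandwiches $\liminf$ and $\limsup$ between $\tau_j(\omega)\pm\|w_j-z\|$; both finish the argument identically in substance.
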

\begin{proof}
	By separability of $\mathcal{Z}\subseteq \R^d$, let $W\subseteq \mathcal{Z}$ be a countable subset such that $\overline{W}=\mathcal{Z}$ and define $\tilde{\Omega}:=\bigcap_{w\in W} \ \Omega_w$. As $W$ is countable and $\mathbb{P}(\Omega_w)=1$ for every $w\in W$, $\tilde{\Omega}$ is f.m. Moreover, for every $\omega\in\tilde{\Omega}$ and every $w\in W$, there exists 
	$$\lim_k \|x_k(\omega)-w\|.$$ 
	We want to show that, for every $\omega\in\tilde{\Omega}$ and every $z\in \mathcal{Z}$, there exists 
	$$\lim_k \|x_k(\omega)-z\|.$$ 
	Fix $\omega\in\tilde{\Omega}$ and $z\in \mathcal{Z}$. As $W$ is dense in $\mathcal{Z}$, there exists a sequence $w_j\subseteq W$ such that $w_j\to z.$ As $w_j\in W$ for each $j\in\N$, we know that there exists 
	\begin{equation}\label{lim}
	\lim_k \|x_k(\omega)-w_j\|=:\tau_j(\omega).
	\end{equation}
	Notice that 
	\begin{equation}\label{ab}
	-\|w_j-z\|\leq \|x_k(\omega)-z\|-\|x_k(\omega)-w_j\|\leq \|w_j-z\|.
	\end{equation}
	Then,
	\begin{equation*}
	\begin{split}
	-\|w_j-z\| & \leq \\
	(\ref{ab}) \ & \leq \liminf_k \left[\|x_k(\omega)-z\|-\|x_(\omega)-w_j\|\right]\\
	(\ref{lim}) \ & =\liminf_k \|x_k(\omega)-z\| - \tau_j(\omega)\\
	& \leq \limsup_k \|x_k(\omega)-z\| - \tau_j(\omega)\\
	(\ref{lim}) \ &= \limsup_k \left[\|x_k(\omega)-z\|-\|x_k(\omega)-w_j\|\right]\\
	(\ref{ab}) \  &\leq \|w_j-z\| .
	\end{split}
	\end{equation*}
	Taking the limit for $j\to +\infty$ and recalling that $w_j\to z$, 
	$$\liminf_k \|x_k(\omega)-z\| = \limsup_k \|x_k(\omega)-z\| $$
	and so that there exists $\lim_k \|x_k(\omega)-z\|$.
\end{proof}
\begin{lemma}[Opial, stochastic version]\label{StochOpial}
	Let $\mathcal{Z}\subseteq\R^d$ a non-empty subset and $(x_k)$ a random sequence on $\left(\Omega, \mathcal{A}, \mathbb{P}\right)$ with values in $\R^d$. Assume that
	\begin{itemize}
		\item for every $z\in \mathcal{Z}$, there exists $\Omega_z$ \ f.m. such that, for every $\omega\in\Omega_z$,
		$$\exists\lim_k \|x_k(\omega)-z\|;$$
		(i.e., for every $z\in \mathcal{Z}$, the random variable $\|x_k-z\|$ converges a.s.)
		\item there exists $\hat{\Omega}$ \ f.m. such that, for every $\omega\in \hat{\Omega}$, every cluster point of $x_k(\omega)$ belongs to $\mathcal{Z}$; namely,
		$$x_{k_j}(\omega)\to x_{\infty} \ \ \implies \ \ x_{\infty} \in \mathcal{Z}.$$
	\end{itemize}
	Then there exists a $\mathcal{Z}$-valued random variable $\bar{x}$ such that $x_k\to\bar{x}$ a.s.; namely, there exists $\bar{\Omega}$ \ f.m. such that, for every $\omega\in \bar{\Omega}$, $x_k(\omega)\to\bar{x}(\omega)$ with $\bar{x}(\omega)\in\mathcal{Z}$.
\end{lemma}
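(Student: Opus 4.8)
The plan is to reduce the statement to the deterministic Opial Lemma~\ref{Opial} applied pathwise, once the first hypothesis has been promoted to hold simultaneously for all $z\in\mathcal{Z}$ on a single full-measure event. The first bullet only says that, for each \emph{fixed} $z\in\mathcal{Z}$, the random variable $\|x_k-z\|$ converges a.s., so the exceptional null set may a priori depend on $z$. The key first step is therefore to invoke Lemma~\ref{aux} with this same $\mathcal{Z}$: it yields a single full-measure event $\tilde\Omega$ such that, for every $\omega\in\tilde\Omega$ and every $z\in\mathcal{Z}$, $\lim_k\|x_k(\omega)-z\|$ exists. This is exactly where separability of $\R^d$ and the countable-dense-subset argument already carried out in Lemma~\ref{aux} do the work.

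Next I would set $\bar\Omega:=\tilde\Omega\cap\hat\Omega$, which is again of full measure as the intersection of two full-measure events. Fix $\omega\in\bar\Omega$ and look at the deterministic sequence $(x_k(\omega))_{k\in\N}$. By the choice of $\tilde\Omega$, $\lim_k\|x_k(\omega)-z\|$ exists for every $z\in\mathcal{Z}$; and since $\omega\in\hat\Omega$, every cluster point of $(x_k(\omega))$ lies in $\mathcal{Z}$. Hence $(x_k(\omega))$ satisfies both hypotheses of the deterministic Opial Lemma~\ref{Opial}, so there is a (necessarily unique) point $\bar x(\omega)\in\mathcal{Z}$ with $x_k(\omega)\to\bar x(\omega)$.

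It then remains to package $\bar x$ as a genuine $\mathcal{Z}$-valued random variable. Fixing an arbitrary $z_0\in\mathcal{Z}$ (possible since $\mathcal{Z}\neq\emptyset$), define $\bar x:=\lim_k x_k$ on $\bar\Omega$ and $\bar x:=z_0$ on $\Omega\setminus\bar\Omega$; this is $\mathcal{Z}$-valued on all of $\Omega$. On the measurable set $\bar\Omega$ the limit $\lim_k x_k(\omega)$ exists, so $\bar x$ restricted to $\bar\Omega$ is the pointwise limit of the measurable maps $x_k\mathbb{1}_{\bar\Omega}$ and is therefore measurable; adding the constant value $z_0$ on the complement keeps $\bar x$ measurable on $\Omega$. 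By construction $x_k(\omega)\to\bar x(\omega)$ for every $\omega\in\bar\Omega$, i.e. $x_k\to\bar x$ a.s., which is the claim.

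I do not expect a genuine obstacle in this step: the heavy lifting has been done in Lemma~\ref{aux}, and what remains is the pathwise application of the deterministic Opial lemma together with the routine fact that a pointwise limit of random vectors, on the measurable event where it exists, is again measurable. The only point to handle with a little care is to produce an everywhere-defined $\mathcal{Z}$-valued $\bar x$ rather than merely an a.s.\ defined one, which is exactly what the default value $z_0$ achieves.
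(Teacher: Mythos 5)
Your proof is correct and follows essentially the same route as the paper: invoke Lemma~\ref{aux} to get a single full-measure event on which the limits $\lim_k\|x_k(\omega)-z\|$ exist for all $z\in\mathcal{Z}$ simultaneously, intersect with $\hat\Omega$, and apply the deterministic Opial Lemma~\ref{Opial} pathwise. Your additional care in constructing $\bar x$ as an everywhere-defined measurable $\mathcal{Z}$-valued map (via the default value $z_0$ off $\bar\Omega$) is a detail the paper leaves implicit, and is a welcome addition rather than a divergence.
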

\begin{proof}
	From the assumptions and Lemma \ref{aux}, there exists $\tilde{\Omega}$ \ f.m. such that, for every $\omega\in\tilde{\Omega}$ and every $z\in\mathcal{Z}$,
	$$\exists\lim_k \|x_k(\omega)-z\|.$$
	Let $\bar{\Omega}:=\tilde{\Omega}\cap \hat{\Omega}$. Then $\mathbb{P}(\bar{\Omega})=1$ and, for every $\omega \in \bar{\Omega}$, we have both that 
	\begin{itemize}
		\item for every $z\in \mathcal{Z}$, $\exists\lim_k \|x_k(\omega)-z\|$;
		\item every cluster point of $x_k(\omega)$ belongs to $\mathcal{Z}$.
	\end{itemize}
	We conclude by the deterministic version of Opial's Lemma \ref{Opial} that, for every $\omega\in \bar{\Omega}$, there exists $\bar{x}\left(\omega\right)\in\mathcal{Z}$ such that $x_k(\omega)\to\bar{x}(\omega)$.
\end{proof}

\begin{lemma}[Robbins-Siegmund \cite{MR0343355}]\label{PC} Let $(\Omega, \mathcal{A}, \mathbb{P})$ be a measure space, and let  $(\mathcal{F}_k)$ 
be a filtration of $\mathcal{A}$. Let $(r_k)$, $(y_k)$, $(w_k)$ and $(\beta_k)$ be sequences of non-negative random variables adapted to  $(\mathcal{F}_k)$. Let $(\beta_k)$ and $(w_k)$ belong to $\ell^1$ a.s. and suppose that, for every $k\in\N$,
	$$\mathbb{E} \left[r_{k+1} \mid  \mathcal{F}_k\right] - (1+\beta_k)r_k +y_k \leqas  w_k.$$
	Then $(r_k)$ converges a.s. to a random variable with non-negative values and $\seq{y_k} \in \ell^1$ a.s. 
\end{lemma}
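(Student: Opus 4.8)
The plan is to prove this as the stochastic counterpart of the deterministic Lemma~\ref{PC_det}, replacing the monotone-sequence arguments used there by Doob's supermartingale convergence theorem. First I would remove the factor $1+\beta_k$ by a pathwise rescaling: set $b_k:=\prod_{j=0}^{k-1}(1+\beta_j)$, which is non-decreasing and, since $(\beta_k)\in\ell^1$ a.s. and $\beta_k\ge0$, converges a.s. to a finite limit $b_\infty\in[1,\infty)$, so $1\le b_k\le b_\infty<\infty$ a.s. Dividing the hypothesis by $b_{k+1}=(1+\beta_k)b_k$ and setting $\tilde r_k:=r_k/b_k$, $\tilde y_k:=y_k/b_{k+1}$, $\tilde w_k:=w_k/b_{k+1}$ turns the recursion into $\mathbb{E}[\tilde r_{k+1}\mid\mathcal{F}_k]\leqas\tilde r_k-\tilde y_k+\tilde w_k$ with $(\tilde w_k)\in\ell^1$ a.s.; because $b_k$ is squeezed between positive finite constants on a full-measure event, convergence of $(r_k)$ and summability of $(y_k)$ are equivalent to the same statements for the tilded sequences, so I may assume $\beta_k\equiv0$ from the start.

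Next I would exhibit the almost supermartingale
\[
M_k:=r_k+\sum_{j=0}^{k-1}y_j-\sum_{j=0}^{k-1}w_j ,
\]
which is $\mathcal{F}_k$-measurable, and check from the (now $\beta$-free) recursion that $\mathbb{E}[M_{k+1}\mid\mathcal{F}_k]\leqas M_k$. The subtlety — and the step I expect to be the main obstacle — is that $M_k$ need not be bounded below by a deterministic constant, because $\sum_j w_j$ is only a.s. finite and need not be integrable, whereas Doob's theorem needs boundedness below (or a uniform bound on the negative parts). I would handle this by localization: for $m\in\mathbb{N}$ let $\tau_m:=\inf\{k\ge0:\sum_{j=0}^{k}w_j>m\}$, a stopping time since $(w_k)$ is adapted. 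On $\{\tau_m>k\}$ the stopped process satisfies $M_{k\wedge\tau_m}\ge-\sum_{j=0}^{(k\wedge\tau_m)-1}w_j\ge-m$, so $(M_{k\wedge\tau_m})_k$ is a supermartingale bounded below by $-m$ and hence converges a.s. to a finite limit. Since $\sum_j w_j<\infty$ a.s. we have $\mathbb{P}(\tau_m=\infty)\to1$, and on $\{\tau_m=\infty\}$ one has $M_k=M_{k\wedge\tau_m}$ for all $k$; letting $m\to\infty$ yields a.s. convergence of $(M_k)$ itself. (If one is unwilling to take integrability for granted, an extra truncation of $r_k$ at a level $N$ followed by $N\to\infty$ removes the residual caveat.)

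Finally I would read off the two conclusions. On the full-measure event $\{\sum_j w_j<\infty\}$ the partial sums $\sum_{j<k}w_j$ converge, so $r_k+\sum_{j<k}y_j=M_k+\sum_{j<k}w_j$ converges a.s.; the sequence $\bigl(\sum_{j<k}y_j\bigr)_k$ is non-decreasing, hence converges to some $S\in[0,+\infty]$, and $S=+\infty$ on a set of positive probability would force $r_k+\sum_{j<k}y_j\to+\infty$ there (as $r_k\ge0$), contradicting its a.s. convergence, so $(y_k)\in\ell^1$ a.s. Then $r_k=\bigl(r_k+\sum_{j<k}y_j\bigr)-\sum_{j<k}y_j$ is a difference of a.s.-convergent sequences, hence converges a.s., with non-negative limit since each $r_k\ge0$; undoing the rescaling of the first step gives the statement. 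Apart from the localization in the middle step, everything is routine bookkeeping.
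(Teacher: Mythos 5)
The paper does not prove this lemma: it is quoted from Robbins and Siegmund \cite{MR0343355} and used as a black box, so there is no in-paper argument to compare against. Your proof is correct and is essentially the classical argument from that reference: divide through by $b_{k+1}=\prod_{j\le k}(1+\beta_j)$ (legitimate because $(\beta_k)$ is adapted, so $b_{k+1}$ is $\mathcal{F}_k$-measurable, and $1\le b_k\le b_\infty<\infty$ a.s.\ since $(\beta_k)\in\ell^1$ a.s.), form the almost-supermartingale $M_k$, localize with the stopping times $\tau_m$ to obtain a supermartingale bounded below by $-m$, apply Doob's convergence theorem, and read off both conclusions from the monotonicity of $\sum_{j<k}y_j$ together with $r_k\ge 0$. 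Two small remarks. First, the statement does not assume the $r_k$ are integrable, so the supermartingale property must be understood via generalized conditional expectations of non-negative variables; your localization fixes the lower bound but not the integrability of $M_0=r_0$, and the cleanest repair is to intersect further with the $\mathcal{F}_0$-events $\{r_0\le N\}$ or to invoke the convergence theorem for non-negative generalized supermartingales. Second, your parenthetical fallback of truncating $r_k$ at level $N$ does not obviously preserve the drift inequality (passing $\min(\cdot,N)$ through the recursion loses the $-y_k$ term), so that remark should be dropped rather than relied upon; the main line of your argument does not need it. In the paper's own applications of the lemma, integrability is verified separately (Remark \ref{lemma: integrabilityI}), so none of this affects how the result is used.
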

\begin{remark}\label{lemma: integrabilityI}
Consider a function $f: \reals^d \to \reals$ with $\lambda$-Lipschitz gradient and at least one minimizer. Applying the recursion of Algorithm \ref{eqn: derivative-free} to such a function with an arbitrary starting point $x_0 \in \reals^d$ we get that $\norm{x_k-x_*}^2$, $\langle P_kP_k^\top \nabla f_k, x_k-x_* \rangle$,  and $\norm{\nabla f_k}^2$ are bounded for all $k>0$. In particular, $\norm{\nabla f_k}^2$, $\norm{x_k-x_*}^2$, and $\langle P_kP_k^\top \nabla f_k, x_k-x_* \rangle$ are all integrable.  To see this note first that $\nabla f_k$ and $x_k$ are both measurable. Recall that for finite $x_k$, $\lambda$-Lipschitz gradient of $f$ implies that $\norm{\nabla f_k}^2 \leq \lambda^2 \norm{x_k-x_*}^2$. Choose an arbitrary finite $x_0$ to begin the recursion. Then,
    \begin{align*}
        \norm{x_1 -x_*}^2 &\leq 2\norm{x_0-x_*}^2 + 2\alpha_0^2\norm{P_0\nabla_{P_0} f_0}^2 \\ &\leq C_1 + C_2\norm{P_0\nabla_{P_0} f_0 - P_0P_0^\top \nabla f_0 + P_0P_0^\top \nabla f_0}^2  \\
        \mathrm{Lemma }\,\, \ref{lem}&\leq C_1 + C_2\left(C_3 + \left (\frac{d}{\ell}\right)^2 \norm{\nabla f_0}^2\right),
    \end{align*}
where $C_1, C_2, C_3$ are fixed and finite, the values are unimportant but can be calculated. Repeated recursion reveals that $\norm{x_k-x_*}^2$ is bounded for all $k>0$. Further, by $\lambda$-Lipschitz gradient, $\norm{\nabla f_k}^2 \leq \lambda^2 \norm{x_k-x_*}^2$. The claim on the inner product follows from Young's inequality and the previous results. Finally, since  $\norm{\nabla f_k}^2$, $\norm{x_k-x_*}^2$, and $\langle P_kP_k^\top \nabla f_k, x_k-x_* \rangle$ are bounded, and measurable, they are integrable. 
\end{remark}
\subsection{Special cases of the algorithm}\label{sect: special cases}
We present several well-known special cases of Algorithm \ref{eqn: derivative-free} (among which are: discrete gradient descent, discrete coordinate descent, spherical smoothing, and more), and provide some historical perspective on the development of these black-box algorithms. 

Assumptions \ref{A_Pas} and \ref{A_PE} describe matrices $P$ that are generalizations of a few well-known cases. It is important that $P$ is comprised of $\ell$ orthonormal columns, and in the special case that $d=\ell$, $P$ is an orthogonal matrix. 
Specifically, when $d=\ell$, $PP^\top \nabla f(x) = \nabla f(x)$ and $\nabla_{(P,h)}f(x)$ is a forward finite difference estimate of the gradient along $d$ orthogonal directions. 
Therefore gradient descent and discrete gradient descent can be viewed as special cases of \eqref{eqn: gradient-free} and Algorithm \ref{eqn: derivative-free} respectively. 
Though this is of little significance in practice, it provides a means for verifying our analysis: by setting $\ell=d$ we ought to recover previously stated results for the discrete gradient method.

In our analysis we do not differentiate between various choices of $P$, all of our proofs hold whenever \ref{A_Pas} and \ref{A_PE} are satisfied.
It is clear that the specific choice of $P$ does impact the performance of the algorithm (see, e.g., \cite{berahas2019theoretical,Kozak2021Stochastic}), but the purpose of this work is to present a unified convergence analysis rather than to investigate the nuances of each particular case. 
Therefore, in this section we present several choices for $P$ that satisfy \ref{A_Pas} and \ref{A_PE} with an emphasis on choices that correspond to previously described methods.
Our results hold for all of the special cases described in this section, and in many of the cases our results represent an advancement over the current theoretical understanding of the special cases described.  

\paragraph{Coordinate descent}
Suppose $P=(\sqrt{d/\ell})S$ where $S$ consists of $\ell$ columns of the identity matrix chosen uniformly at random.
In this case it is straightforward to see that $PP^\top\nabla f(x)$ corresponds to $\ell$ coordinate directions of the gradient scaled by a constant $d/\ell$; that is, \eqref{eqn: gradient-free} with this choice of P is a scaling of block coordinate descent with uniform sampling of the coordinates. 
Analogously, $P\nabla_{(P,h)}f(x)$ is (up to a scaling constant) a forward finite difference estimate of the gradient along $\ell$ coordinate directions and we recover discretized coordinate descent. 

\paragraph{Coordinate descent with a change of basis}
More generally, if $P$ consists of $\ell$ columns  selected uniformly at random from an orthogonal matrix in $\reals^{d\times d}$, and scaled by $\sqrt{d/\ell}$ then $P$ satisfies \ref{A_Pas} and \ref{A_PE}.
For instance, suppose that one selects $\ell$ columns uniformly at random from scaled versions of a discrete cosine transform matrix or the Hadamard matrix. The resulting $d \times \ell$ matrix defines a valid matrix $P$ for our analysis.
Any particular choice of fixed orthogonal matrix for $P$ amounts to discretized coordinate descent in a different basis, but if little is known a priori about the structure of $f$ then there is no reason to select one basis over another.

\paragraph{Random orthogonal matrices}
The orthogonal matrices mentioned in the previous paragraph have been coupled with a random component and used extensively to "sketch" problems. Typically, sketching entails approximating a problem by representing the data in a lower dimensional (random) subspace, and solving the approximate problem (see, e.g., \cite{MR2639236,MR2754850} for the least squares case, or \cite{MR3285427,MR4189294} for a more general overview of sketching algorithms in numerical linear algebra). 
The properties of the matrix that projects the data onto a subspace allow for guarantees on the quality of the approximated solution as compared to the true solution. These same matrices can instead be used to sketch the gradient using our method. For an example of such a sketching matrix consider a Hadamard matrix $H \in \reals^{d\times d}$ , a diagonal matrix $D \in \reals^{d \times d}$ with equiprobable entries $\{1,-1\}$ along the diagonal, and a matrix $S \in \reals^{d \times \ell}$ independent of $D$ with columns chosen uniformly at random from the identity. 
Let $P = (\sqrt{d/\ell})DHS$, with $D$ and $S$ re-sampled at each iteration. This is similar to the sketching matrix described in \cite{MR2754850} and its properties are well known, it is simple to verify that it satisfies \eqref{A_Pas} and \eqref{A_PE}. This method is described in \cite{pmlr-v80-choromanski18a}, however they provide only empirical results, making no claims about the convergence properties. To our knowledge our analysis is the first to provide convergence analysis for these types of matrices used in a derivative-free optimization setting. 

\paragraph{Spherical smoothing}
Consider instead $P = (\sqrt{d/\ell})\,Q\I_{d\times\ell}$, where $Q$ is as in the $QR$-decomposition of a matrix $Z = QR \in \reals^{d \times d}$  with $R_{ii} >0,$ and the entries of $Z_k$ are iid $\mathcal{N}(0,1)$.
The matrix $\I_{d \times \ell}$ truncates $Q$ to its first $\ell$ columns so $Q\I_{d\times\ell}$ corresponds to $\ell$ columns of the random orthogonal matrix distributed according to the Haar measure on orthogonal matrices \cite{Mezzadri}. 
In other words, the columns $p^{(j)}$ are orthogonal and distributed uniformly on the sphere for all $j$. Thus, when $\ell=1$, $P\nabla_{(P,h)}f(x)$ is a spherical smoothing estimate of the gradient, as described in, e.g., \cite{MR2298287,berahas2019theoretical}. 
The matrix $Z$ is re-sampled at each iteration so, as with the matrices described in the previous paragraph, the basis changes with each iteration. In fact, sampling from the Haar measure on the set of orthogonal matrices corresponds to sampling uniformly from the set of orthogonal matrices.

For the case $\ell >1$ it is more common in the literature $\cite{berahas2019theoretical}$ to sample $p^{(j)}$ independently and uniformly on the sphere, but in our case, to satisfy Assumptions \eqref{A_Pas} and \eqref{A_PE}, the columns of $P$ must be orthonormal, consistent with \cite{Kozak2021Stochastic}. 
The advantage of a matrix $P$ with orthonormal columns is discussed at length in \cite{Kozak2021Stochastic}, we remark here merely that this property is required to obtain our results and to connect Algorithm \ref{eqn: derivative-free} with discrete gradient descent when $\ell=d$; indeed consider that when $p^{(j)}$ are sampled independently and uniformly on the sphere, which we denote as $p^{(j)} \overset{iid}{\sim} \mathcal{U}(S(0,1))$, the gradient estimate is

$$\nabla f(x) \approx  \frac{1}{\ell}\sum_{j=1}^\ell p^{(j)} \frac{f(x+p^{(j)} h)-f(x)}{h}, \qquad p^{(j)} \overset{iid}{\sim} \mathcal{U}(S(0,1)), 
$$ 
with the discrete gradient recovered only as $\ell \to \infty$. In contrast, the approximation

$$  \nabla f(x) \approx P \begin{pmatrix}
    \frac{f(x_k + p^{(1)}h) -f (x)}{h} \\ \vdots \\
    \frac{f(x_k + p^{(\ell)}h) -f (x)}{h}
    \end{pmatrix}, \qquad p^{(j)} \mathrm{\,columns\, of\, Haar}$$
and the discrete gradient is recovered whenever $\ell=d$ due to the orthogonality of the Haar distributed random matrix.

\paragraph{Gaussian smoothing}
The Gaussian smoothing framework first described in the technical report \cite{nesterov2011random} and later in \cite{nesterov2017random} \emph{does not} fit into our framework because the columns do not have unit norm.
However, spherical smoothing, which is covered by our framework, can be thought of as a normalized version of Gaussian smoothing since a Haar distributed random matrix is generated by orthonormalizing a Gaussian random matrix. It is shown in \cite{berahas2019theoretical} that spherical smoothing provides better approximations to the gradient than does Gaussian smoothing. 

The intuition behind this statement is both illuminating and simple to provide. In Gaussian smoothing, $P = \sqrt{d}z$ where $z \sim \mathcal{N}(0,1)$. Then, $\nabla f(x) \approx (1/h) (f(x + zh)-f(x) )z$. Since $z$ has infinite support, the finite difference stepsize varies with each iteration irrespective of the value of $h$. Thus the approximation of the gradient has a positive probability of being arbitrarily bad even when the direction chosen is near to the direction of the gradient! With spherical smoothing, the directions chosen are identical, but the finite difference stepsize is always of length $h$ which leads to more consistent and predictable results. 

\begin{remark}
The literature is scarce but growing when it comes to convergence results for many of the above-mentioned methods in the finite difference setting. There has been plenty of attention to analyzing these methods when exact directional derivatives are available (i.e., the setting of \eqref{eqn: gradient-free}, see, e.g., \cite{MR3347548,Kozak2021Stochastic,MR3116649,MR1264365}), but implementing the algorithms they analyze requires access to exact directional derivatives (e.g., via forward-mode automatic differentiation). Generally speaking, practical implementations of these algorithms often do not use automatic differentiation software -- either because it is not feasible, or because it is too restrictive and time consuming -- relying instead on function evaluations and finite difference approximations of the gradient.
\end{remark}

\subsection{Previous works}
The limit definition of the derivative makes it natural to estimate the gradient via finite differences, the method of finite difference gradient descent goes back to Cauchy \cite{cauchy1847methode}. 
For a more modern treatment we can look to the seminal paper of Kiefer and Wolfowitz \cite{MR50243} which extends the results of Robbins and Monro \cite{MR42668} on stochastic approximation to the case where the gradient is approximated by a central finite difference. 
In \cite{MR50243} it is shown that with sufficiently fast decaying stepsize and finite difference step, the iterates converge asymptotically to the minimizer of a function under regularity conditions on the specified function.
The setting of \cite{MR50243} differs from that of this paper by accessing only stochastic approximations of the function that is being minimized whereas we assume the function can be queried exactly, allowing for much stronger results.

Kushner and Clark \cite[pg. 59-61]{MR499560} explore the asymptotic properties of what is now known as spherical smoothing, a special case of our algorithm.
They work in a slightly different setting, adopting the noisy function evaluations case of Kiefer and Wolfowitz and making more assumptions on the objective function such as twice-differentiability. Again, the results are weaker and less general than those we provide. For their algorithm and under their regularity assumptions, Kushner and Clark are able to show that the iterates of their algorithm converge to a minimizer. Their analysis provided the basis for much subsequent work. 
Spall \cite{MR1148715} compares his work to that of Kushner and Clark with the notable difference being that Spall does not choose directions uniformly on the sphere, but from a more general, unspecified, mean-zero distribution; this is perhaps the clearest intellectual predecessor to Gaussian smoothing, which is discussed below, because it includes Gaussian smoothing as a special case. The generality of Spall's results requires him to assume thrice-differentiability of the objective function.
A main beneficiary of the work of Kushner and Clark is the reinforcement learning community which has adopted their method and renamed it \emph{evolutionary strategies}. 
These evolutionary strategies were first described by Williams \cite{williams1992simple}, who called them REINFORCE algorithms. Williams was apparently unaware of the work of Kushner and Clark and suggested that "While there is a clear need for an analytical characterization of the asymptotic behavior of REINFORCE algorithms, such results are not yet available, leaving simulation studies as our primary source of understanding of the behaviour of these algorithms". Subsequent literature that makes use of evolutionary strategies (e.g., \cite{pmlr-v80-choromanski18a,salimans2017evolution}) typically mention Williams as the forebear for these methods, while some mention Spall's work in providing convergence properties, and many mention the work of Nesterov \cite{nesterov2011random} discussed below. 
Of particular note is \cite{MR2298287}, which works in the setting of Kushner and Clark but frames it as a reinforcement learning problem and provides finite-time results in expectation. See also \cite{Agarwal}. 

 The asymptotic behavior of discrete gradient descent with exact function queries is investigated in \cite{MR1264365} which provides an upper bound on the level sets of the limiting function values, as well as a radius of convergence for the iterates. 
 The setting is somewhat restrictive, with the objective $f$ assumed to be $\gamma$-strongly convex with $\lambda$ Lipschitz gradient, and only asymptotic properties are provided.
A method for performing finite difference coordinate descent is described and analyzed in  \cite{MR1264365}, however the method described therein uses coordinates only as directions, no approximate derivative (i.e., finite difference) information is used.
 
Nesterov published a technical note \cite{nesterov2011random} in 2011, and later a peer-reviewed article with Spokoiny \cite{nesterov2017random} analyzing the convergence behaviour of a finite difference optimization algorithm in which the direction of descent is chosen according to a Gaussian distribution in the following manner.
Let $U \sim \mathcal{N}(0, \I_d)$, then the direction of descent is $U (f(x+Uh)-f(x)) / h \approx \nabla f(x)$.

Though they do not analyze any particular algorithm, Berahas et al. \cite{berahas2019theoretical} provide a thorough comparison of the quality of various derivative-free approximations of the gradient.
Included in their analysis are finite difference (and by simple corollary, coordinate descent), Gaussian smoothing, and spherical smoothing. 
Their work provides the framework required for analysis of any gradient-based descent algorithm based on finite difference approximations,
 analysis that is particularly useful in the related paper \cite{berahas2019global} which describes a line search method appropriate for gradient descent algorithms when the gradient is known only approximately.
Expected rates of convergence are provided in \cite{berahas2019global} under a variety of convexity assumptions when the gradient is approximated using any of the methods discussed in \cite{berahas2019theoretical}. 

The algorithm analyzed in \cite{Kozak2021Stochastic} is identical to \eqref{eqn: gradient-free}. They discuss but do not analyze Algorithm \eqref{eqn: derivative-free}, which is the primary focus of our work. 
Furthermore, we provide stronger results in the convex case using \eqref{eqn: gradient-free}: we prove almost sure convergence of the iterates to a minimizer (cf. Theorem \ref{th:FejerSStrong}) whereas in \cite{Kozak2021Stochastic} only expected results are provided, and only for the function values.

Trust-region methods are a class of derivative-free optimization algorithms that we do not explore here, we note merely that recent work analyzes a stochastic subspace method that is analogous to ours for the trust-region framework, specifically for non-linear least squares problems \cite{cartis2021scalable} .

\section*{Acknowledgements}
L.R. acknowledges support from the Center for Brains, Minds and Machines (CBMM), funded by NSF STC award CCF-1231216, and the Italian Institute of Technology. L.R. also acknowledges the financial support of the European Research Council (grant SLING 819789), the AFOSR projects FA9550-
18-1-7009, FA9550-17-1-0390 and BAA-AFRL-AFOSR-2016-0007 (European Office of Aerospace Research and Development), and the EU H2020-MSCA-RISE project NoMADS - DLV-777826. This material is based upon work supported by the Air Force Office of Scientific Research under award number FA8655-20-1-7028. This work has been supported by the ITN-ETN project TraDE-OPT funded by the European Union’s Horizon 2020 research and innovation programme under the Marie Skłodowska-Curie grant agreement No 861137.

\end{document}